\documentclass[11pt,reqno]{amsart}

\usepackage{verbatim}
\usepackage{graphicx}
\usepackage{subfigure}
\usepackage{amsfonts}
\usepackage{amssymb}
\usepackage{amsmath}
\usepackage{amsthm}
\usepackage{latexsym}
\usepackage{amscd}
\usepackage{mathrsfs}
\usepackage{verbatim}
\usepackage{mathrsfs}
\usepackage{enumitem}
\usepackage{braket}
\usepackage[margin=1.3in]{geometry}
\usepackage{comment}
\usepackage{bm}

\usepackage{imakeidx}
\makeindex[title=Index of Notation]

\usepackage{hyperref}
\hypersetup{
	colorlinks=true,
	linkcolor=black,
	urlcolor=black,
	citecolor=black
}
\usepackage{caption}
\usepackage{array}

\usepackage{amssymb}
\usepackage{pdfpages}
\usepackage{verbatim}

\usepackage{appendix}
\usepackage{xspace}

\newtheorem{theorem}{Theorem}[section]

\newtheorem{lemma}[theorem]{Lemma}
\newtheorem{proposition}[theorem]{Proposition}
\newtheorem{corollary}[theorem]{Corollary}
\theoremstyle{definition}
\newtheorem{definition}[theorem]{Definition}

\theoremstyle{remark}
\newtheorem{remark}[theorem]{Remark}

\newcommand{\IN}{\mathbb{N}}
\newcommand{\IR}{\mathbb{R}}

\numberwithin{equation}{section}

\makeatletter
\@namedef{subjclassname@2020}{\textup{2020} Mathematics Subject Classification}
\makeatother
\subjclass[2020]{ Primary 53E10}

\begin{document}
	\title[well-posedness of mean curvature flow]{Well-posedness of mean curvature flow}
	
	\author{Yongheng Han}
	\address{School of Mathematical Science, University of Science and Technology of China,  Hefei City,  Anhui Province 230026}
	
	\email{hyh2804@mail.ustc.edu.cn}
	
	\date{\today}

	\keywords{Mean curvature flows, heat kernel estimates, well-posedness.}
	
	\begin{abstract}
		In this paper, using heat kernel estimates and contraction mapping principle, we give a new proof of the existence and uniqueness of mean curvature flow starting from hypersurface with bounded second fundamental form. Moreover, we show the continuous dependence of mean curvature flow on initial data.
	\end{abstract}
	
	\maketitle
	\tableofcontents
	\section{Introduction}
	Let $\mathbf{x}_0:M^n\to \mathbb{R}^{n+1}$ be a  hypersurface in $\mathbb{R}^{n+1}$. A one-parameter family of immersions $\mathbf{x}(p,t):M\to  \mathbb{R}^{n+1}$ is called a mean
	curvature flow, if $\mathbf{x}$ satisfies the equation
	\begin{equation}\label{eq0.1}
		\frac{\partial \mathbf{x}}{\partial t}=-H\mathbf{n},\quad \mathbf{x}(0)=\mathbf{x}_0,
	\end{equation}
	where $H$ denotes the mean curvature of the hypersurface $M_t:=\mathbf{x}(t)(M)$ and $\mathbf{n}$ denote  the unit normal vector field of $M_t$.  Huisken \cite{Hui90} proved that if the flow \eqref{eq0.1} develops a singularity at time $T<\infty$, then the second fundamental form will blow up at time $T$. 
	
	The existence of mean curvature flow under various boudary conditions is well-known 
	(cf.\cite{Lib1,Hui89,Lib2,Guan}). Ecker and Husiken \cite{EH} established the short time existence of the mean curvature flow on complete  hypersurface with bounded second fundamental form. Chen and Yin \cite{CY} showed the uniqueness of the mean curvature flow on complete submanifold with bounded second fundamental form.
	
	The problem of mean curvature flow with rough initial data have been extensively studied. In the case that $M$ is a Lipschitz graph, short time existence was proved by Ecker and Huisken in \cite{EH}(see also \cite{Wan04,HL}).  Clutterbuck \cite{Clu} studied the graphic mean curvature flow equation with continuous initial data over convex domain. Hershkovits  
	\cite{Her1,Her2} showed the existence and uniqueness of smooth flows in $\IR^m$ starting from a Reifenberg set.
	
	Similar questions have been studied in other geometric flows. Wang \cite{Wan12} proved the well-posedness for the heat flow of biharmonic maps with initial data in $BMO$  space. Simon \cite{Sim} showed the existence of Ricci flow form a $C^0$ metric. In \cite{Bam14,Bam15}, Bamler showed that hyperbolic manifolds and symmetric spaces are stable under small $C^0$ perturbation. 
	In \cite{Bur19}, Burkhardt-Guim showed the  continuous dependence of closed Ricci-DeTurck flow. Recently, Cai and Wang \cite{CW} demonstrated the  well-posedness of  Ricci-DeTurck flow of noncompact  Riemannian manifold.
	
	In first part of the paper, we use the heat kernel method and the contraction mapping principle to prove the existence and uniqueness of the mean curvature flow with bounded second fundamental form.
	
	\begin{theorem}\label{thm0.1}
		Let $\mathbf{x}_0:M^n\to \IR^{n+1}$ be an isometrically immersed Riemannian manifold with bounded second fundamental form $|A|\leq \kappa$. Then there exists a sufficient small constant $T=T(\kappa,n)>0$ and a  family of immersions $\{\mathbf{x}(\cdot,t)\}_{t\in [0,T]}$ that solves the mean curvature flow with initial data $\mathbf{x}(\cdot,0)=\mathbf{x}_0$, $\mathbf{x}(\cdot ,t)$ is unique in $X_T$(see definition \ref{def3.1}).
	\end{theorem}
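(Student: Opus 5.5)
The strategy is to turn the mean curvature flow into a semilinear-type parabolic system in the fixed background geometry of $(M,g_0)$, where $g_0=\mathbf{x}_0^*\langle\cdot,\cdot\rangle$, and then solve it by a fixed point argument in the space $X_T$ of Definition~\ref{def3.1}. The point of departure is that the mean curvature vector equals the Laplacian of the position in the induced metric, $-H\mathbf{n}=\Delta_{g(t)}\mathbf{x}$. Setting $\mathbf{x}=\mathbf{x}_0+\mathbf{u}$ with $g_{ij}(t)=\langle\partial_i\mathbf{x},\partial_j\mathbf{x}\rangle$, I would rewrite the equation as
\begin{equation*}
\partial_t \mathbf{u}-\Delta_{g_0}\mathbf{u}=\Delta_{g_0}\mathbf{x}_0+\bigl(g^{ij}-g_0^{ij}\bigr)\nabla^2_{ij}\mathbf{x}+g^{ij}\bigl(\Gamma(g_0)-\Gamma(g)\bigr)^k_{ij}\partial_k\mathbf{x}.
\end{equation*}
This is the DeTurck-type modification: the last term is tangential, so the solution differs from the MCF by a tangential diffeomorphism, which is recovered by solving an ODE afterwards. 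The right-hand side is quadratic-type in $\nabla\mathbf{u}$ and $\nabla^2\mathbf{u}$. The source $\Delta_{g_0}\mathbf{x}_0$ is bounded by $\sqrt n\,\kappa$. The geometry of $g_0$ is controlled by Gauss' equation, giving $|\mathrm{Rm}|\le 2\kappa^2$. Hence the scalar heat kernel of $(M,g_0)$, and the kernel on vector bundles, obey Gaussian upper bounds and derivative bounds $|\nabla^k K|\lesssim t^{-(n+k)/2}e^{-d^2/Ct}$ on $[0,T]$ with $T\sim\kappa^{-2}$.

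With these kernel estimates I would apply Duhamel's formula,
\begin{equation*}
\mathbf{u}=\Phi(\mathbf{u}):=\int_0^t\!\!\int_M K(\cdot,y,t-s)\,F(\mathbf{u})(y,s)\,dy\,ds,
\end{equation*}
and work in a norm of the form
\begin{equation*}
\|\mathbf{u}\|_{X_T}=\sup_t|\mathbf{u}|+\sup_t|\nabla\mathbf{u}|+\sup_t t^{1/2}|\nabla^2\mathbf{u}|,
\end{equation*}
which is presumably what Definition~\ref{def3.1} encodes, together with smallness of $\nabla\mathbf{u}$. The key linear estimates are
\[
\Bigl\|\int_0^t e^{(t-s)\Delta}f\Bigr\|_{C^1}\lesssim \sqrt t\,\sup|f|,
\qquad
t^{1/2}\Bigl\|\nabla^2\!\int_0^t e^{(t-s)\Delta}f\Bigr\|\lesssim \sup_s s^{1/2}|f(s)|\cdot C,
\]
the latter requiring a H\"older-type or cancellation argument, since $\nabla^2K$ is not integrable in time. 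For this reason I would place the second derivative inside the divergence where possible: the term $(g^{ij}-g_0^{ij})\nabla^2_{ij}\mathbf{x}$ can be written as $\nabla(\text{stuff}(\nabla\mathbf{u}))$ plus lower-order terms, and one derivative is then moved onto the kernel, giving the integrable factor $(t-s)^{-1/2}$.

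Next I would show that $\Phi$ maps a ball $\{\|\mathbf{u}\|_{X_T}\le\varepsilon\}$ into itself and is a contraction there for $T=T(\kappa,n)$ small. Because $g-g_0=O(\nabla\mathbf{u})$, the nonlinearity is Lipschitz with constant $O(\varepsilon)$. The forcing term contributes $O(\sqrt T\kappa)$ to $\nabla\mathbf{u}$ and $O(\kappa)$ to $t^{1/2}\nabla^2\mathbf{u}$, so the weighted second-derivative part must be measured relative to $\nabla^2\mathbf{x}_0$, i.e.\ one controls $t^{1/2}|\nabla^2\mathbf{u}|\lesssim \kappa\sqrt t$. Banach's fixed point theorem then gives a unique fixed point. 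Parabolic bootstrapping makes it smooth for $t>0$, and pulling back by the diffeomorphisms generated by the tangential field yields a genuine MCF. Uniqueness in $X_T$ follows because the contraction holds for any two solutions in $X_T$; alternatively, one converts both solutions to the gauge-fixed form, where the harmonic-map-type gauge keeps them in the ball, and applies the contraction.

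I expect the main obstacle to be the second-derivative estimate of the Duhamel term: obtaining a closed bound for $t^{1/2}\nabla^2\mathbf{u}$ in sup norm, which fails for general bounded $f$. I would try first to exploit the divergence structure described above. If that is insufficient, I would include a weighted H\"older seminorm of $\nabla^2\mathbf{u}$ in the norm, using Gaussian bounds for $\nabla^3K$ on noncompact $M$. These rely only on $|A|\le\kappa$ and a bound on $\nabla A$ for $t>0$, which can be circumvented by working in harmonic or normal coordinates at scale $\kappa^{-1}$.
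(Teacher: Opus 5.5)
There is a genuine gap, and it is exactly at the step you flag as the main obstacle. With the norm $\sup|\mathbf{u}|+\sup|\nabla\mathbf{u}|+\sup t^{1/2}|\nabla^2\mathbf{u}|$ the fixed point cannot close. For a source with $|f(s)|\lesssim s^{-1/2}$, the naive bound on the Hessian of the Duhamel term involves $\int_{t/2}^{t}(t-s)^{-1}\,ds=\infty$. This is not an artifact: it is the failure of Calder\'on--Zygmund at $p=\infty$. So the quadratic term $(g^{ij}-g_0^{ij})\nabla^2_{ij}\mathbf{u}$ is not controlled.

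Your first repair does not help. Writing that term as $\nabla G(\nabla\mathbf{u})$ and moving the derivative onto the kernel makes the Hessian estimate worse: it now needs $\nabla_x^2\nabla_yK$, whose $L^1_y$-norm scales like $(t-s)^{-3/2}$. The integrable factor $(t-s)^{-1/2}$ only appears when estimating $\mathbf{u}$ itself, and the leftover $\nabla_i(g^{ij})\nabla_j\mathbf{u}$ still contains $\nabla\mathbf{u}\,\nabla^2\mathbf{u}$. The H\"older fallback is only gestured at, and its justification is off. Your kernel is that of the static metric $g_0$, so there is no smoothing of $\nabla A$ for $t>0$. Moreover, $|A|\le\kappa$ alone (i.e.\ $g_0\in W^{2,p}\subset C^{1,\alpha}$ in harmonic coordinates) gives no Gaussian bound on $\nabla^3K$.

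The missing idea is the one built into Definition~\ref{def3.1}, which is not a weighted sup norm. $X_T$ controls $\sup_{x,r}r^{2/(n+4)}\|\nabla^2u\|_{L^{n+4}(\Omega(x,r))}$ on cylinders $\Omega(x,r)=B(x,r)\times(r^2/2,r^2)$, and the source is measured in the matching $Y_T$ norm. In Proposition~\ref{prop3.1}, the $C^{0,1}$ bounds come from H\"older's inequality against $K,\nabla K\in L^{(n+4)/(n+3)}$ near $(x,t)$ (this is where $n+4>n+2$ is needed), plus a dyadic covering with Gaussian decay far away. The Hessian bound comes from splitting the source into two pieces. The restriction to $\Omega(x_0,r)$ is handled by the local parabolic $L^{n+4}$ estimate (Lemma~\ref{Lp-estimate}) in harmonic coordinates, and the remainder by the Gaussian bound on $\nabla^2G$. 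Since $p<\infty$ the endpoint problem never arises, and $|\nabla u|\,|\nabla^2u|$ is bounded in $Y_T$ by $\|u\|_{X_T}^2$ (Proposition~\ref{prop3.2}).

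The second gap is uniqueness in the class of the statement. $X_T$ consists of scalar normal-graph functions $u$ on $M$ (the flow is $\mathbf{x}_0+u\mathbf{n}$), while your fixed point is a position vector in a DeTurck gauge. A mean curvature flow written as a normal graph with $u\in X_T$ is not a fixed point of your gauge-fixed Duhamel map, so ``the contraction holds for any two solutions in $X_T$'' is not true. Bringing an arbitrary solution into your gauge requires solving a harmonic-map-type heat flow from Lipschitz-level data and proving that the gauged solution lands in your ball. That is the hard part of any DeTurck-type uniqueness argument, and you simply assert it. You would also have to show that your solution, after reparametrization, is a normal graph whose height function lies in $X_T$.

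The paper sidesteps gauges altogether. By Proposition~\ref{prop1.3}, the normal-graph parametrization over the fixed $M$ already yields the scalar, strictly parabolic equation $\partial_tu=-H_0+\Delta u+|A|^2u+\mathcal{Q}(u)$. So the single contraction of Theorem~\ref{thm3.4} in $X_T$ gives existence and uniqueness in exactly that class.
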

	
	In the second part of the paper, we establish the continuous dependence on initial data. For a Lipschitz function $f$ on $M$. We denote $\|f\|_{C^{0,1}(M)}:=\|f\|_{L^\infty(M)}+\|\nabla f\|_{L^\infty(M)}$.
	
	\begin{theorem}\label{thm0.2}
		Let $\mathbf{x}_0:M^n\to \IR^{n+1}$ be an isometrically immersed Riemannian manifold with bounded second fundamental form $|A|\leq \kappa$ and $\mathbf{x}(\cdot,t)$ be the mean curvature flow in Theorem \ref{thm0.1}. There exists a costant $\varepsilon=\varepsilon(n)>0$ and $C=C(n,\kappa),T'=T'(n,\kappa)>0$  such that for any $\|f_0\|_{C^{0,1}}\leq \varepsilon$	there is a mean curvature flow $\tilde{\mathbf{x}}:M\times [0,T']\to  \IR^{n+1}$ with initial data $M_{f_0}$ such that $\tilde{\mathbf{x}}(t)(M)=M_{f(\cdot,t)}$  and
		\begin{equation}
			\|f(\cdot,t)\|_{C^{0,1}(M_t)}\leq C\|f_0\|_{C^{0,1}(M_0)}, \quad \forall\; t\in [0,T'].
		\end{equation}
		The solution is unique in the ball $\{f|\|f\|_{X_{T'}}\leq C\varepsilon\}$.
	\end{theorem}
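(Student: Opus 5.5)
We would prove Theorem \ref{thm0.2} by perturbing around the flow $M_t$ from Theorem \ref{thm0.1}: write the nearby flow as a normal graph $\tilde{\mathbf{x}}(p,t)=\mathbf{x}(p,t)+f(p,t)\mathbf{n}(p,t)$ over $M_t$, derive a scalar parabolic equation for $f$, and solve it by a fixed point argument in the space $X_{T'}$. By Shi-type estimates for $M_t$ (available from Theorem \ref{thm0.1} and its proof), we have $|\nabla^k A|\le C(n,\kappa)t^{-k/2}$ and $|A|\le 2\kappa$ on $[0,T']$. Hence the graph over $M_t$ is a well-defined immersion as long as $|f|\kappa$ and $|\nabla f|$ stay small.

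\textbf{Step 1: the equation.} Up to a tangential reparametrization, which does not change the image $M_{f(\cdot,t)}$, the normal graph moves by mean curvature iff
\begin{equation*}
\partial_t f=\Delta_{g(t)} f+|A|^2 f+Q(f,\nabla f,\nabla^2 f),
\end{equation*}
where $\Delta_{g(t)}$ and $A$ are those of $M_t$. The nonlinearity $Q$ has the structure $Q=a(x,f,\nabla f)*\nabla^2 f*(f A+\nabla f)+b(x,f,\nabla f)$. Here $a$ is smooth and bounded, and $b$ is at least quadratic in $(f,\nabla f)$ with coefficients involving $A$ and $\nabla A$. The key point is that the second-order part of $Q$ is multiplied by a factor vanishing to first order in $(f,\nabla f)$. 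This is the standard computation of the mean curvature of a normal graph, and we would carry it out in a local orthonormal frame on $M_t$.

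\textbf{Step 2: linear estimates.} Let $\Phi$ be the fundamental solution of $\partial_t-\Delta_{g(t)}-|A|^2$ on $M\times[0,T']$. We would use the heat kernel estimates established earlier for the proof of Theorem \ref{thm0.1}: Gaussian bounds on $\Phi$, and on $\nabla\Phi$ with an extra factor $(t-s)^{-1/2}$. We then write the problem in Duhamel form
\begin{equation*}
f(t)=\int_M\Phi(\cdot,t;y,0)f_0(y)\,dy+\int_0^t\!\!\int_M\Phi(\cdot,t;y,s)Q(f)(y,s)\,dy\,ds.
\end{equation*}
Since $Q$ contains $\nabla^2 f$, it is convenient to move one derivative onto the kernel by writing the top-order term in divergence form, $a*\nabla^2 f=\nabla(a*\nabla f)-\nabla a*\nabla f$. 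The norm on $X_{T'}$ would be
\begin{equation*}
\|f\|_{X_{T'}}=\sup_t\|f\|_{C^{0,1}}+\sup_t t^{1/2}\|\nabla^2 f\|_{L^\infty},
\end{equation*}
in line with Definition \ref{def3.1}. The first term gives $\|\Phi*f_0\|_{X_{T'}}\le C\|f_0\|_{C^{0,1}}$, using bounded geometry and the heat kernel gradient bounds.

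\textbf{Step 3: contraction.} On the ball $B=\{\|f\|_{X_{T'}}\le C\varepsilon\}$, the structure of $Q$ gives
\begin{equation*}
\|\mathcal{S}(f)-\mathcal{S}(h)\|_{X_{T'}}\le C\bigl(\varepsilon+T'^{1/2}\bigr)\|f-h\|_{X_{T'}},
\end{equation*}
where $\mathcal{S}$ denotes the Duhamel map. This uses time integrals of the form $\int_0^t(t-s)^{-1/2}s^{-1/2}\,ds=\pi$; such integrals are bounded but do not become small as $T'\to0$. The required smallness must therefore come from $\varepsilon=\varepsilon(n)$ multiplying the $\nabla^2 f$ terms. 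Lower-order terms carry an extra power of $T'$, which controls the dependence on $\kappa$. Choosing $\varepsilon$ and then $T'$ small, $\mathcal{S}$ maps $B$ into itself and is a contraction. This yields existence, uniqueness in $B$, and the estimate $\|f(t)\|_{C^{0,1}}\le C\|f_0\|_{C^{0,1}}$. Finally, parabolic smoothing shows that $f$ is smooth for $t>0$, so the graphs form a genuine mean curvature flow, after undoing the tangential diffeomorphism by solving an ODE.

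\textbf{Main obstacle.} We expect the hardest step to be the estimate of the $\nabla^2 f$ component of the norm for the quasilinear term. Making the singular time weights close, with only $C^{0,1}$ initial data and without losing a derivative, requires the divergence-form rewriting and second-derivative heat kernel bounds of the form $|\nabla^2\Phi|\lesssim (t-s)^{-1}$ times a Gaussian. The time integral $\int_0^t(t-s)^{-1}s^{-1/2}\,ds$ diverges logarithmically. If that happens, our first fallback is to replace the $t^{1/2}\|\nabla^2 f\|_{L^\infty}$ term in the norm by a weighted Hölder seminorm $t^{\alpha/2}[\nabla f]_{C^\alpha}$. The second option is to use a BMO-type Carleson norm on $\nabla f$, as in Wang's biharmonic work cited in the introduction.
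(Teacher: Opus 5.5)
Your architecture is the paper's: a normal graph over $M_t$ as in Proposition~\ref{prop1.5}, the Duhamel formula with the kernel of $\partial_t-\Delta_{g(t)}-|A|^2$, and a contraction on a small ball. Your structural analysis of $\mathcal{Q}$ and your $C^{0,1}$ bound via $\int_0^t(t-s)^{-1/2}s^{-1/2}\,ds$ are fine. The argument does not close, however, at exactly the step you flag, and this is not an artifact of a crude bound. Despite your remark that the norm is ``in line with Definition~\ref{def3.1}'', it differs from the paper's precisely in the second-order component.

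With $\sup_t t^{1/2}\|\nabla^2 f\|_{L^\infty}$ in the norm, all you know about the forcing is $|\mathcal{Q}(f)(\cdot,s)|\lesssim \varepsilon s^{-1/2}\|f\|_{X_{T'}}$, with no regularity in $x$. The linear estimate that Step~3 needs, $\sup_t t^{1/2}\|\nabla^2\int_0^t\!\int_M\Phi F\|_{L^\infty}\lesssim \sup_s s^{1/2}\|F(s)\|_{L^\infty}$, is false. Already in flat space with $n\ge 2$, take $F(y,s)=s^{-1/2}\,\mathrm{sgn}(y_1y_2)$. Then $\int\partial_{x_1}\partial_{x_2}\Phi(0,t;y,s)F(y,s)\,dy=\frac{s^{-1/2}}{\pi(t-s)}$, whose time integral diverges. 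This is the failure of Calder\'on--Zygmund estimates at $p=\infty$, so elements of your ball, which are only $C^{1,1}$ in space, need not be mapped back into $X_{T'}$, and the smallness of $\varepsilon$ cannot compensate for an unbounded operator.

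The divergence-form rewriting does not rescue this. The graph equation is not in divergence form, and $\nabla a$ contains $\partial_{\nabla f}a*\nabla^2 f$, so the remainder still carries $\nabla^2 f$. Moreover, moving a derivative onto the kernel worsens the Hessian bound to $(t-s)^{-3/2}$.

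Your first fallback, replacing the second-order term by $t^{\alpha/2}[\nabla f]_{C^\alpha}$, discards the pointwise control of $\nabla^2 f$ that you need even to bound $\mathcal{Q}(f)$. A workable Schauder-type version would have to keep $t^{1/2}\|\nabla^2 f\|_{L^\infty}$ and add $t^{(1+\alpha)/2}[\nabla^2 f]_{C^\alpha}$. It would then use the cancellation $\int_M\nabla_x^2\Phi(x,t;y,s)\bigl(F(y,s)-F(x,s)\bigr)dy$ and H\"older bounds on $\nabla^2\Phi$ on the evolving manifold, none of which you set up. Separately, for $\Phi*f_0$ with Lipschitz $f_0$, kernel gradient bounds alone give only $t^{-1}\|f_0\|_{L^\infty}$ for the Hessian; a derivative must be moved onto $f_0$.

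The missing idea, which the paper uses, is to measure second derivatives in a scale-invariant $L^p$ Carleson-type norm with $n+2<p<\infty$, namely $\sup_{x,\,r^2<T}r^{2/(n+4)}\|\nabla^2 f\|_{L^{n+4}(B(x,r)\times(r^2/2,r^2))}$. This is the $L^{n+4}$ analogue of your $t^{1/2}\|\nabla^2 f\|_{L^\infty}$, and the forcing is measured in the same norm, $Y_T$. The linear estimate (Propositions~\ref{prop3.1} and~\ref{prop4.1}) then closes:
\begin{itemize}
\item The $C^{0,1}$ part follows from Gaussian bounds on the kernel and its gradient. Near the diagonal you use H\"older's inequality; $p>n+2$ is exactly what puts $\nabla\Phi$ in $L^{p'}$ there. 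Away from it you use a dyadic covering.
\item The Hessian part comes from local parabolic $L^{n+4}$ Calder\'on--Zygmund estimates in harmonic coordinates on $\Omega(x,r)$, plus second-derivative Gaussian bounds off it. There is no logarithmic loss because $p<\infty$.
\end{itemize}
The quadratic bound $\|\mathcal{Q}(f)\|_{Y_T}\le C\|f\|_{C^{0,1}}\|f\|_{X_T}+C\sqrt{T}\|f\|_{C^{0,1}}^2$ then follows immediately from the structure of $\mathcal{Q}$ that you correctly identified. The free term is handled in Proposition~\ref{prop4.3}: a comparison bound for the gradient of the caloric extension, obtained from its Bochner-type evolution equation, followed by Calder\'on--Zygmund applied to that same equation. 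Your BMO/Carleson fallback points in this direction, but the decisive choice is an $L^p$ Carleson norm with $n+2<p<\infty$.
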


	\noindent
	\textbf{Acknowledgment:}
	The author would like to thank his advisor Bing Wang for suggesting this problem. The author is supported by  the Project of Stable Support for Youth Team in
	Basic Research Field,  Chinese Academy of Sciences,  (YSBR-001).

	
	
	
	\section{Graphic mean curvature flow equations}
	
	In \cite{CM16,CM19,SX}, they calcuated the expressions of equations of graphic rescaled MCFs. In this section, we calculate the expression of equation of graphic MCFs. Let $M\subset \IR^{n+1}$ be a smooth complete hypersurface. Given a function $u$ on $M$, we define $M_u$  by 
	\begin{equation}
		M_u= \{x+u\mathbf{n}|x\in M\}
	\end{equation}
	where $\mathbf{n}$ is the unit  normal vector of $M$ at $x$.
	
	\subsection{ MCF equation over a manifold}
	\begin{lemma}\cite{CM16}\label{lm1.1}
		There are functions $w,v,\eta$ depending on $(p,s,y)\in M \times\IR\times T_pM$ that are smooth for $|s|$ smooth enough and depend smoothly on $M$ :
		\begin{itemize}
			\item The relative area element $v(p)=\sqrt{\mathrm{det}g^u_{ij}(p)}/\sqrt{\mathrm{det}g_{ij}(p)}$ where $g_{ij}$ is the metric from $M$ and $g^u_{ij} $ is the metric from $M_u$.
			\item The mean curvature $H_u(p)$ of $M_u$ at $p+u(p)$.
			\item The speed function $w_{u}=\langle e_{n+1},\mathbf{n}_u\rangle^{-1} $ evaluated at the point $p+u(p)$.
		\end{itemize}
		The function $w,v$ satisfy:
		\begin{itemize}
			\item $w(p,s,0)\equiv 1,\partial_{s}(p,s)=0,\partial_{y_\alpha}w(p,s,0)=0$ and $\partial_{y_\alpha}\partial_{y_\beta}w(p,0,0)=\delta_{\alpha\beta}. $
			\item $v(p,0,0)=1$; the non-zero first and second order are $\partial_{s}v(p,0,0)=H(p)$,\\ $\partial_s^2v(p,0,0)=H^2(p)-|A|^2(p),\partial_{p_j}\partial_{s}v(p,0,0)=H_j(p)$, and $\partial_{y_\alpha}\partial_{y_\beta}v(p,0,0)=\delta_{\alpha\beta}.$
			
		\end{itemize}
	\end{lemma}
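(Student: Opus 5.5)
We work in a local frame and compute everything explicitly in Fermi-type coordinates. Fix $p\in M$ and choose normal coordinates $x^1,\dots,x^n$ on $M$ centered at $p$, with $e_i=\partial_i \mathbf{x}_0$ and the second fundamental form diagonalized at $p$, $h_{ij}(p)=\kappa_i\delta_{ij}$. The graph map is $F_u(x)=\mathbf{x}_0(x)+u(x)\mathbf{n}(x)$. Weingarten gives $\partial_i\mathbf{n}=h_i^{\,k}e_k$, hence
\begin{equation*}
\partial_i F_u=(\delta_i^k+u\,h_i^{\,k})e_k+u_i\,\mathbf{n}.
\end{equation*}
This already shows that every geometric quantity of $M_u$ at $F_u(p)$ is a smooth function of $(p,s,y)=(p,u(p),\nabla u(p))$, plus, for $H_u$, of the second derivatives of $u$. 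Each such function is smooth for $|s|$ small, namely $|s|\,|A|<1$, and depends smoothly on $M$ through $g,h$ and their covariant derivatives.

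\emph{Step 1: the area element $v$.} Writing $P=I+sA$ for the matrix $(\delta_i^k+s h_i^{\,k})$, the induced metric is $g^u_{ij}=(P^TP)_{ij}+y_iy_j$. Hence
\begin{equation*}
v=\det P\sqrt{1+|P^{-T}y|^2}.
\end{equation*}
Expanding $\det(I+sA)=1+sH+\tfrac{s^2}{2}(H^2-|A|^2)+O(s^3)$ gives $v(p,0,0)=1$, $\partial_sv=H$ and $\partial_s^2v=H^2-|A|^2$. The square root gives $\partial_{y_\alpha}\partial_{y_\beta}v(p,0,0)=\delta_{\alpha\beta}$ and kills all first-order $y$-derivatives and the mixed $s,y$ terms at $(0,0)$. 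Differentiating $\partial_sv(p,0,0)=H(p)$ in $p$ gives $\partial_{p_j}\partial_sv=H_j$.

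\emph{Step 2: the normal and $w$.} The unit normal of $M_u$ is
\begin{equation*}
\mathbf{n}_u=\frac{\mathbf{n}-(P^{-1})^{k i}y_i e_k}{\sqrt{1+|P^{-T}y|^2}},
\end{equation*}
which one checks is orthogonal to every $\partial_iF_u$. Here $w$ is to be read as $\langle \mathbf{n},\mathbf{n}_u\rangle^{-1}$, with $\mathbf{n}$ playing the role of $e_{n+1}$. Then
\begin{equation*}
w=\sqrt{1+|P^{-T}y|^2}.
\end{equation*}
So $w(p,s,0)\equiv1$, which makes $\partial_s w=0$ and $\partial_{y_\alpha} w=0$ at $y=0$. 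The Hessian in $y$ at $s=0$ is $\delta_{\alpha\beta}$.

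\emph{Step 3: the mean curvature.} We write
\begin{equation*}
H_u=-g_u^{ij}\langle\partial_i\partial_jF_u,\mathbf{n}_u\rangle,
\end{equation*}
with the sign chosen so that $H_u$ agrees with $H$ when $u=0$. Using Gauss–Weingarten, $\partial_i\partial_jF_u$ contains $u_{ij}\mathbf{n}$ together with terms in $u$, $\nabla u$, $h$ and $\nabla h$. It follows that $H_u$ is a smooth function of $(p,u,\nabla u)$ that is affine in $\nabla^2u$, with coefficient matrix $g_u^{ij}/w$, which is uniformly elliptic for small $(s,y)$.

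The main obstacle is the second-order Taylor coefficients of $v$: the mixed and pure $s$-derivatives at $(p,0,0)$ must be tracked carefully. I would handle this by working at the center of normal coordinates with $A$ diagonal, so that $\det(I+sA)=\prod_i(1+s\kappa_i)$ can be expanded directly, and $\sum_{i<j}\kappa_i\kappa_j=\tfrac12(H^2-|A|^2)$. Smooth dependence on $M$ then follows because all the formulas involve only $g$, $h$ and $\nabla h$ algebraically.
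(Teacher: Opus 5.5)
Your computation is correct. The paper gives no argument of its own for this lemma; it only cites Colding--Minicozzi. For $v$ and $w$ your route is the standard one there:
\begin{itemize}
\item take the tangent vectors $(\delta_i^k+u\,h_i^{\,k})e_k+u_i\mathbf{n}$;
\item deduce $g^u=P^TgP+y\,y^T$, hence $v=\det(I+sA)\sqrt{1+|P^{-T}y|^2}$ and $w=\langle\mathbf{n},\mathbf{n}_u\rangle^{-1}=\sqrt{1+|P^{-T}y|^2}$;
\item read off the Taylor coefficients.
\end{itemize}
Your reading of $e_{n+1}$ as $\mathbf{n}$ is the intended one.

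Where you genuinely differ is the mean curvature item. Colding--Minicozzi obtain $H_u$ from the first variation of area. For compactly supported $\phi$, differentiating $\int_M v(p,u+t\phi,\nabla u+t\nabla\phi)$ at $t=0$ gives $\int_M\phi\,(\partial_s v-\mathrm{div}\,\partial_y v)$. The variation field $\phi\mathbf{n}$ has normal component $\phi/w$ on $M_u$, so the same derivative also equals $\int_M H_u\,\phi\,v/w$. Hence $H_u=\frac{w}{v}[\partial_s v-\mathrm{div}(\partial_y v)]$.

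This divergence form is exactly what the paper uses next, in Corollary \ref{coro1.2} and Proposition \ref{prop1.3}. It also explains why the lemma records precisely those Taylor coefficients of $v$: the linearization $\Delta u+|A|^2u$ follows from them in one line.

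Your direct formula $H_u=-g_u^{ij}\langle\partial_i\partial_jF_u,\mathbf{n}_u\rangle$ shows quasilinearity and the leading coefficient transparently, but it does not give the divergence structure. To reach Corollary \ref{coro1.2} you would need to add the first-variation step, or carry out a longer direct computation.

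One small slip: with your sign convention, the coefficient of $\nabla^2u$ in $H_u$ is $-g_u^{ij}/w$, not $g_u^{ij}/w$. That sign is what makes $\partial_t u=-wH_u$ forward parabolic.
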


	\begin{corollary}\cite{CM16}\label{coro1.2}
		The mean curvature of $M_{u}$ is 
		\begin{equation}
			H_{u}=\frac{w}{v}[\partial_sv-\mathrm{div}(\partial_{y^\alpha}v)]
		\end{equation}
		where $v$ and its derivatives are evaluated at $(p,u(p),\nabla u(p))$. Furthermore, the linearization of $L$ of $H_{u}$ is 
		\begin{equation}
			\begin{split}
				L:=	-\frac{d}{dt}\bigg|_{t=0}H_{tu}=\Delta u+|A|^2u.
			\end{split}
		\end{equation}
	\end{corollary}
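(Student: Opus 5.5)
We prove Corollary \ref{coro1.2} in two steps: first the variational formula for $H_u$, then its linearization at $u=0$.

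\emph{Step 1: the formula for $H_u$.} For a compactly supported variation $\phi$ of $u$, the area functional $\mathcal{A}(u)=\int_M v(p,u,\nabla u)\,d\mu_M$ satisfies the first variation formula for the normal graph $M_u$:
\begin{equation*}
\frac{d}{d\epsilon}\Big|_{\epsilon=0}\mathcal{A}(u+\epsilon\phi)=\int_{M_u} H_u\,\langle \phi\mathbf{n},\mathbf{n}_u\rangle\,d\mu_{M_u}.
\end{equation*}
On the right, $d\mu_{M_u}=v\,d\mu_M$ by the definition of $v$. The factor $\langle\mathbf{n},\mathbf{n}_u\rangle$ is the reciprocal of the speed function $w$, with $\mathbf{n}$ playing the role of $e_{n+1}$. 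So the right-hand side equals $\int_M H_u\,\phi\, v/w\,d\mu_M$. On the left, we differentiate under the integral and integrate by parts in the $y$-slot:
\begin{equation*}
\int_M\big(\partial_s v\,\phi+\partial_{y^\alpha}v\,\nabla_\alpha\phi\big)\,d\mu_M=\int_M\big(\partial_s v-\mathrm{div}(\partial_{y^\alpha}v)\big)\phi\,d\mu_M.
\end{equation*}
Since $\phi$ is arbitrary, $H_u\,v/w=\partial_sv-\mathrm{div}(\partial_{y^\alpha}v)$, which is the stated formula.

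\emph{Step 2: the linearization.} We substitute $tu$ and differentiate at $t=0$, using the Taylor data in Lemma \ref{lm1.1}.

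First, $w(p,s,y)=1+O(|y|^2)$, because $w(p,s,0)=1$ and $\partial_y w(p,s,0)=0$. Hence $w=1+O(t^2)$ along $tu$.

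Next, $v=1+tH u+O(t^2)$, because $\partial_s v=H$, $\partial_y v=0$ at the origin, and $v(p,0,0)=1$. Therefore $\frac{d}{dt}\big|_{t=0}(w/v)=-Hu$.

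For the bracket, at $t=0$ we have $\partial_sv-\mathrm{div}(\partial_yv)=H-0=H$, since $\partial_{y}v(p,0,0)=0$ for all $p$.

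Its $t$-derivative at $t=0$ has two pieces.
\begin{itemize}
\item From $\partial_s v$: $\partial_s^2v\,u+\partial_y\partial_sv\cdot\nabla u=(H^2-|A|^2)u$. The mixed term $\partial_y\partial_s v(p,0,0)$ vanishes, as it is not in the list of nonzero second derivatives.
\item From $\mathrm{div}(\partial_{y^\alpha}v)$: we get $\mathrm{div}\big(\partial_s\partial_{y^\alpha}v\,u+\partial_{y^\alpha}\partial_{y^\beta}v\,\nabla_\beta u\big)=\Delta u$, using $\partial_{y^\alpha}\partial_{y^\beta}v=\delta_{\alpha\beta}$ and $\partial_s\partial_y v=0$.
\end{itemize}
The subtle point is that the divergence is taken of a function of $p$ that depends on $p$ also explicitly. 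One must check that the explicit $p$-derivative $\partial_{p_j}\partial_{y}v$ does not contribute at first order. It does not: $\partial_y v(p,0,0)\equiv0$ for every $p$, so all of its $p$-derivatives vanish too.

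Combining the two steps,
\begin{equation*}
\frac{d}{dt}\Big|_{t=0}H_{tu}=-Hu\cdot H+(H^2-|A|^2)u-\Delta u=-\Delta u-|A|^2u.
\end{equation*}
So $L u=\Delta u+|A|^2u$, with the sign convention of $\mathbf{n}$ fixed as in \cite{CM16}.

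The main obstacle is the sign and orientation bookkeeping in Step 1: identifying the Jacobian $v$ and the factor $1/w$ correctly. Step 2 is a routine expansion once the vanishing of the mixed derivatives is noted.
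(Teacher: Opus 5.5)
Your argument is correct and is essentially the one in \cite{CM16}, to which the paper defers without giving its own proof. The first variation of area, with $d\mu_{M_u}=v\,d\mu_M$ and $\langle\mathbf{n},\mathbf{n}_u\rangle=w^{-1}$, identifies $H_u\,v/w$ with $\partial_sv-\mathrm{div}(\partial_{y^\alpha}v)$; your reading of the paper's $e_{n+1}$ as $\mathbf{n}$ is the right one. The linearization is then the Taylor expansion using the data of Lemma \ref{lm1.1}.

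Your separate remark about the explicit $p$-dependence is unnecessary, and as phrased it only covers the zeroth-order term. Since $\frac{d}{dt}$ commutes with $\mathrm{div}$, you already differentiated inside the divergence, where $\partial_s\partial_{y^\alpha}v(p,0,0)=0$ and $\partial_{y^\alpha}\partial_{y^\beta}v(p,0,0)=\delta_{\alpha\beta}$ for every $p$ give $\mathrm{div}(\nabla u)=\Delta u$ directly.
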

	
	\begin{proposition}\cite{CM16}\label{prop1.3}
		The graph $M_u$ flow by MCF if and only if $u$ satisfies
		\begin{equation}\label{eq1.4}
			\begin{split}
				\partial_tu(p,t)&=-\frac{w^2}{v}[\partial_sv-\mathrm{div}(\partial_{y^\alpha}v)]=-H_0+Lu+\mathcal{Q}(u),
			\end{split}
		\end{equation}
		where $H_0$ is the mean curvature of $M$ and	$\mathcal{Q}(u)$ is quadratic in $u$ i.e, $|\mathcal{Q}(su)|\leq C_us^2$.
	\end{proposition}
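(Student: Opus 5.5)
We verify Proposition \ref{prop1.3} directly from the normal-graph parametrization together with Corollary \ref{coro1.2}, and then Taylor expand about $u=0$.

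\textbf{Step 1: the flow equation for the graph function.} Consider the family $F(p,t)=p+u(p,t)\mathbf{n}(p)$. A family of hypersurfaces moves by mean curvature, up to tangential diffeomorphisms, if and only if the normal component of the velocity equals $-H$:
\[
\langle \partial_t F,\mathbf{n}_u\rangle=-H_u .
\]
Since $\partial_t F=\partial_t u\,\mathbf{n}$, this becomes
\[
\partial_t u\,\langle \mathbf{n},\mathbf{n}_u\rangle=-H_u .
\]
By definition of the speed function, $\langle\mathbf{n},\mathbf{n}_u\rangle^{-1}=w$, where in Lemma \ref{lm1.1} the vector $e_{n+1}$ plays the role of $\mathbf{n}$. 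Hence $\partial_t u=-wH_u$. The converse direction holds because tangential reparametrization does not change the image, so solving for $u$ gives a MCF after composing with a suitable diffeomorphism. Substituting the formula for $H_u$ from Corollary \ref{coro1.2} yields the first equality,
\[
\partial_t u=-\frac{w^2}{v}\big[\partial_s v-\mathrm{div}(\partial_{y^\alpha}v)\big],
\]
with all quantities evaluated at $(p,u,\nabla u)$.

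\textbf{Step 2: Taylor expansion.} Define $\Phi(u)=-wH_u$, a smooth function of $(p,u,\nabla u,\nabla^2u)$ for $|u|$ small. Its value at $u=0$ is
\[
\Phi(0)=-w(p,0,0)\,H_0=-H_0 .
\]
For the linear part, note that $w(p,s,0)\equiv1$ and $\partial_{y}w(p,s,0)=0$. Therefore $\frac{d}{dt}\big|_{0}w(p,tu,t\nabla u)=0$, and so
\[
\frac{d}{dt}\Big|_{0}\Phi(tu)=-\frac{d}{dt}\Big|_{0}H_{tu}=Lu=\Delta u+|A|^2u,
\]
by Corollary \ref{coro1.2}.

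\textbf{Step 3: the remainder.} Set $\mathcal{Q}(u)=\Phi(u)+H_0-Lu$. Taylor's theorem with integral remainder gives
\[
\mathcal{Q}(su)=\int_0^1(1-\tau)\,\frac{d^2}{d\tau^2}\Phi(\tau su)\,d\tau .
\]
The integrand is $s^2$ times a second derivative of $\Phi$ in $(u,\nabla u,\nabla^2u)$, evaluated along the segment. Since $\Phi$ is linear in $\nabla^2u$, this is bounded by $C(1+|\nabla^2u|)(|u|+|\nabla u|+|\nabla^2u|)^2$. Here the constant depends on bounds for $w,v$ and their derivatives, which are controlled by the geometry of $M$. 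Hence $|\mathcal{Q}(su)|\le C_us^2$.

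\textbf{Main obstacle.} The delicate point is Step 1's identification $\langle\mathbf{n},\mathbf{n}_u\rangle^{-1}=w$ and the justification of the equivalence with MCF up to tangential diffeomorphism. The subsequent expansion is routine given Lemma \ref{lm1.1} and Corollary \ref{coro1.2}.
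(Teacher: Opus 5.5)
Your proof is correct and takes essentially the same route as the argument in \cite{CM16}, which the paper cites without reproducing. The normal speed of $p+u\mathbf{n}$ is $\partial_t u\,\langle\mathbf{n},\mathbf{n}_u\rangle=\partial_t u/w$, so equating it with $-H_u$ gives $\partial_t u=-wH_u$, and the expansion $-H_0+Lu+\mathcal{Q}(u)$ then follows from $w(p,s,0)\equiv 1$, $\partial_y w(p,s,0)=0$, Corollary \ref{coro1.2} and Taylor's theorem, exactly as you do (and your reading of $e_{n+1}$ in Lemma \ref{lm1.1} as $\mathbf{n}$ is the intended one).
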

	More precisely, we have the following estimates:
	\begin{proposition}\cite{CM19}\label{prop1.4}
		There is $\delta>0$ such that if $\|u\|_{C^{0,1}}<\delta,\|v\|_{C^{0,1}}\leq \delta$, then we have 
		\begin{equation}\label{eq1.5}
			\begin{split}
				|\mathcal{Q}(u)|\leq C(|u|^2+|\nabla u|^2+|\nabla^2 u|(|\nabla u|+|u|)),
			\end{split}
		\end{equation}
		and 
		\begin{equation}\label{eq1.6}
			\begin{split}
				|\mathcal{Q}(u)-\mathcal{Q}(v)|&\leq C(\|u\|_{C^{0,1}}+|\nabla v|_{C^{0,1}})(|u-v|+|\nabla(u-v)|+|\nabla^2 (u-v)|)\\
				&+C(|\nabla (u-v)|+|u-v|)|\nabla^2 u|.
			\end{split}
		\end{equation}
	\end{proposition}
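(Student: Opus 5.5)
The estimates follow from the structure given in Lemma \ref{lm1.1} and Corollary \ref{coro1.2}. The key observation is that the graphic MCF operator is \emph{quasilinear}: it is affine in $\nabla^2 u$, with coefficients that are smooth in the lower order data $(p,u,\nabla u)$. Expanding the divergence in Corollary \ref{coro1.2} by the chain rule gives
\begin{equation*}
-\frac{w^2}{v}\big[\partial_s v-\mathrm{div}(\partial_{y^\alpha}v)\big]
= a^{\alpha\beta}(p,u,\nabla u)\,\nabla^2_{\alpha\beta}u + b(p,u,\nabla u),
\end{equation*}
where
\begin{equation*}
a^{\alpha\beta}=\frac{w^2}{v}\,\partial_{y_\alpha}\partial_{y_\beta}v,\qquad
b=-\frac{w^2}{v}\big[\partial_s v-\partial_{p_\alpha}\partial_{y_\alpha}v-\partial_s\partial_{y_\alpha}v\,\nabla_\alpha u\big].
\end{equation*}
The functions $a,b$ are smooth on the compact set $|s|\le\delta$, $|y|\le\delta$. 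Their derivatives are bounded in terms of $\kappa$, since they depend smoothly on $M$ and hence on $A$ and its derivatives. Here I take $C$ to depend on the geometry of $M$, as in \cite{CM19}.

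\textbf{Estimate \eqref{eq1.5}.} By Proposition \ref{prop1.3}, $\mathcal{Q}(u)=a^{\alpha\beta}\nabla^2_{\alpha\beta}u+b+H_0-Lu$. Write
\begin{equation*}
a^{\alpha\beta}(p,u,\nabla u)=a^{\alpha\beta}(p,0,0)+O(|u|+|\nabla u|)=\delta^{\alpha\beta}+O(|u|+|\nabla u|),
\end{equation*}
using $\partial_{y_\alpha}\partial_{y_\beta}v(p,0,0)=\delta_{\alpha\beta}$ and $w=v=1$ at $(p,0,0)$. The second order part is then $\Delta u+O\big((|u|+|\nabla u|)|\nabla^2u|\big)$.

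For $b$, Taylor expand to second order in $(s,y)$ around $(p,0,0)$. The zeroth order term is $-H_0$, since $b(p,0,0)=-\partial_s v(p,0,0)=-H$; note $\partial_{p_\alpha}\partial_{y_\alpha}v(p,0,0)=0$ because $\partial_y v(\cdot,0,0)\equiv0$. The first order terms are exactly $|A|^2u$, which is what the linearization in Corollary \ref{coro1.2} says. The remainder is bounded by $C(|u|^2+|\nabla u|^2)$, with $C$ the sup of the second derivatives of $b$ on the $\delta$-box. Subtracting $-H_0+\Delta u+|A|^2u$ gives \eqref{eq1.5}.

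\textbf{Estimate \eqref{eq1.6}.} Write $z=(u,\nabla u)$ and $z'=(v,\nabla v)$ (here $v$ denotes the second function, not the area element). Then
\begin{equation*}
\mathcal{Q}(u)-\mathcal{Q}(v)=\big[a(z)-\delta\big]\nabla^2(u-v)+\big[a(z)-a(z')\big]\nabla^2 v+\big[\tilde b(z)-\tilde b(z')\big],
\end{equation*}
where $\tilde b=b+H_0-|A|^2s$ has vanishing value and first derivative at $0$.

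The three terms are estimated as follows.
\begin{itemize}
\item The first term is at most $C\|u\|_{C^{0,1}}|\nabla^2(u-v)|$.
\item For the third term, the mean value theorem together with $D\tilde b(0)=0$ gives $|D\tilde b|\le C|z|$ along the segment between $z'$ and $z$. So this term is at most $C(\|u\|_{C^{0,1}}+\|v\|_{C^{0,1}})(|u-v|+|\nabla(u-v)|)$.
\item For the middle term, the Lipschitz bound on $a$ gives $C(|u-v|+|\nabla(u-v)|)|\nabla^2 v|$.
\end{itemize}
The statement has $|\nabla^2u|$ rather than $|\nabla^2v|$ in the last term. To get that form, swap the roles of $u$ and $v$ in the splitting, putting $a(z')-\delta$ on $\nabla^2(u-v)$ and $a(z)-a(z')$ on $\nabla^2u$. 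The first term then carries $\|v\|_{C^{0,1}}$, which matches the prefactor $\|u\|+\|\nabla v\|$ in \eqref{eq1.6} up to harmless enlargement: the left side is controlled by both splittings.

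\textbf{Main obstacle.} The only real content is verifying that the expansion is exactly quasilinear, with no $|\nabla^2u|^2$ term. This is immediate from $\mathrm{div}(\partial_y v(p,u,\nabla u))$ being linear in $\nabla^2u$. The remaining work is tracking the zeroth and first order terms against Lemma \ref{lm1.1} so that they cancel against $-H_0+Lu$. The uniformity of $C$ requires bounds on $\nabla A$ (and possibly $\nabla^2A$) of $M$, which enter through the $p$-dependence. In the paper's application this is presumably handled by applying the estimate on $M_t$ for $t>0$, where Shi-type estimates give such bounds.
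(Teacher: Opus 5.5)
Your proof is correct and is essentially the standard argument behind the result of \cite{CM19}, which the paper cites without proving. That argument has three steps: quasilinearity in $\nabla^2 u$; Taylor expansion at $(p,0,0)$ via Lemma \ref{lm1.1}, so that the zeroth and first order parts cancel against $-H_0+Lu$; and the mean value theorem for the difference.

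Your caveats are also right.
\begin{itemize}
\item The explicit $p$-derivative term $\partial_{p_\alpha}\partial_{y_\alpha}v$ contributes a quadratic term of the form $u\langle\nabla H,\nabla u\rangle$. So $C$ depends on $\sup_M|\nabla A|$ and not on $\kappa$ alone; only $\nabla A$ is needed, not $\nabla^2 A$.
\item The prefactor $|\nabla v|_{C^{0,1}}$ in \eqref{eq1.6} must be read as $\|v\|_{C^{0,1}}$, as you did. For $u=0$ and $v\equiv s$ a small constant, the right side as literally written vanishes, while $|\mathcal{Q}(s)|=s^2|\mathrm{tr}(A^3)|+O(|s|^3)$, which is nonzero on a round sphere.
\end{itemize}
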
	

	\subsection{MCF equation over evolving metric}$\\$
	
	Similarly, we can calculate the equation of a mean curvature  flow near another mean curvature flow. Given a mean curvature flow $\{M_t\}$ defined for $t\in [0,T]$ and suppose that $M_t$ has uniformly bounded second fundamental form. In fact if $M_0$ has bounded second fundamental form, then by Chen-Yin's \cite{CY} pseduolocality theorem,  $M_t$ has uniformly bounded second fundamental form $t$ small enough. Given a function $u:M\times [0,T]\to \IR$, we define $M_{u,t}$ by 
	\begin{equation}\label{eq1.7}
		\begin{split}
			M_{u,t}=\{x+u\mathbf{n}|x\in M_t\}.
		\end{split}
	\end{equation}
	\begin{proposition}\cite{SX}\label{prop1.5}
		The graph of $M_{u,t}$ flow by MCF if and only if $u$ satisfy
		\begin{equation}\label{eq1.8}
			\partial_tu=L_{x,t}u+\mathcal{Q}_t(u),
		\end{equation}
		where $L_{x,t}u=\Delta_{M_t}u+|A_{M_t}|^2u$.
	\end{proposition}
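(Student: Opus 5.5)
We would prove Proposition \ref{prop1.5} by reducing it to the static computation of Proposition \ref{prop1.3}, applied at each fixed time to the hypersurface $M_t$ in place of $M$. The one new ingredient is that the base point moves, so the time derivative has to be computed with care.

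\emph{Setting up the parametrization.} Write the background flow as $\mathbf{x}(p,t)$ with normal $\mathbf{n}(p,t)$, and set
\begin{equation*}
\mathbf{y}(p,t)=\mathbf{x}(p,t)+u(p,t)\mathbf{n}(p,t).
\end{equation*}
For small $\|u\|_{C^{0,1}}$, and using the uniform bound on $|A_{M_t}|$, this is an immersion parametrizing $M_{u,t}$. Being a mean curvature flow is a geometric condition: the normal component of the velocity must equal $-H_{u,t}$, and tangential reparametrizations do not matter. So the condition reads
\begin{equation*}
\langle \partial_t\mathbf{y},\mathbf{n}_{u}\rangle=-H_{u,t},
\end{equation*}
where $\mathbf{n}_u$ is the normal of $M_{u,t}$.

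\emph{Computing the velocity.} We have
\begin{equation*}
\partial_t\mathbf{y}=-H_t\mathbf{n}+\partial_tu\,\mathbf{n}+u\,\partial_t\mathbf{n}.
\end{equation*}
Along MCF the normal evolves by $\partial_t\mathbf{n}=\nabla H_t$, which is tangent to $M_t$. Pairing with $\mathbf{n}_u$ and using $w_u=\langle \mathbf{n},\mathbf{n}_u\rangle^{-1}$, the condition becomes
\begin{equation*}
\partial_tu=H_t-w_uH_{u,t}-u\,w_u\langle\nabla H_t,\mathbf{n}_u\rangle.
\end{equation*}
By Lemma \ref{lm1.1} and Corollary \ref{coro1.2}, applied to $M_t$ (the functions $w,v$ depend smoothly on the hypersurface, hence on $t$), we get
\begin{equation*}
w_uH_{u,t}=w^2v^{-1}\bigl[\partial_sv-\mathrm{div}(\partial_{y^\alpha}v)\bigr],
\end{equation*}
which expands as $H_t-L_{x,t}u+O(u^2)$.

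\emph{Cancellations.} The term $H_t$ cancels exactly. This is the key difference from \eqref{eq1.4}, where $-H_0$ survives: the background $M_t$ is itself moving by MCF, so its own speed is already absorbed.

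For the term $u\,w_u\langle\nabla H_t,\mathbf{n}_u\rangle$, note that $\mathbf{n}_u=\mathbf{n}-\nabla u+O(u,\nabla u)^2$ up to terms like $uA$. Hence $\langle\nabla H_t,\mathbf{n}_u\rangle$ is $O(|u|+|\nabla u|)$, and the whole term is quadratic. It is not linear, because $\nabla H_t\perp\mathbf{n}$. Collecting everything gives
\begin{equation*}
\partial_tu=L_{x,t}u+\mathcal{Q}_t(u),
\end{equation*}
where $\mathcal{Q}_t$ collects the quadratic remainders. These are smooth in $(p,t,u,\nabla u,\nabla^2u)$, with coefficients bounded by $|A_{M_t}|$ and its derivatives.

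\emph{Converse and main obstacle.} The converse follows by reversing the computation: a solution of \eqref{eq1.8} gives normal speed $-H_{u,t}$, so $M_{u,t}$ is a MCF up to tangential diffeomorphisms.

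The main obstacle is checking that no linear term survives from the $u\,\partial_t\mathbf{n}$ and $\partial_t g$ contributions. This comes down to the linear part of $\langle\nabla H_t,\mathbf{n}_u\rangle$ vanishing at $u=0$. A second point needing care is that the quadratic bounds hold uniformly in $t$, which requires uniform bounds on $\nabla^kA_{M_t}$ (e.g.\ Shi-type estimates for $t>0$).
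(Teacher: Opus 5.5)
The paper gives no argument of its own here (it only cites \cite{SX}). Your proof is the standard derivation and is correct: you apply the static computation behind Proposition \ref{prop1.3} on each slice $M_t$, note that $H_t$ cancels exactly, and check that the extra velocity term $u\,\partial_t\mathbf{n}=u\nabla H_t$ contributes only $u\,w_u\langle\nabla H_t,\mathbf{n}_u\rangle$, which has no linear part because $\nabla H_t\perp\mathbf{n}$. Two small remarks: the tangential part of $\mathbf{n}_u$ is $(I+uA)^{-1}\nabla u$ up to sign and normalization, so that term is really $O(|u|\,|\nabla u|\,|\nabla H_t|)$, and there is no separate $\partial_t g$ contribution to check since $H_{u,t}$ is computed slice by slice; also, interior estimates only give $|\nabla A_{M_t}|\le Ct^{-1/2}$, so uniformity of the constants in $\mathcal{Q}_t$ as $t\to 0$ is an issue for the subsequent quadratic estimates, not for this identity.
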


	\begin{proposition}\cite{SX}
		There is $\delta>0$ such that if $\|u\|_{C^{0,1}}<\delta,\|v\|_{C^{0,1}}\leq \delta$, then we have 
		\begin{equation}\label{eq1.9}
			\begin{split}
				|\mathcal{Q}_t(u)|\leq C(|u|^2+|\nabla u|^2+|\nabla^2 u|(|\nabla u|+|u|)),
			\end{split}
		\end{equation}
		and 
		\begin{equation}\label{eq1.10}
			\begin{split}
				|\mathcal{Q}_t(u)-\mathcal{Q}_t(v)|&\leq C(\|u\|_{C^{0,1}}+|\nabla v|_{C^{0,1}})(|u-v|+|\nabla(u-v)|+|\nabla^2 (u-v)|)\\
				&+C(|\nabla (u-v)|+|u-v|)|\nabla^2 u|.
			\end{split}
		\end{equation}
	\end{proposition}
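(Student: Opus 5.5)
We need to prove the last proposition: the estimates \eqref{eq1.9} and \eqref{eq1.10} for $\mathcal{Q}_t$ over the moving hypersurface $M_t$. The plan is to repeat the static computation of Proposition \ref{prop1.4} at each fixed time, using only that $\{M_t\}$ solves MCF with uniformly bounded geometry.

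\textbf{Step 1: the equation.} At each time $t$, apply Lemma \ref{lm1.1} and Corollary \ref{coro1.2} with $M$ replaced by $M_t$. The normal speed of $M_{u,t}$ has two contributions:
\begin{itemize}[label={}]
\item the term $\partial_t u$;
\item the motion of the base point, namely $-H_{M_t}\mathbf{n}$, together with the rotation of the normal, $\partial_t\mathbf{n}=\nabla H_{M_t}$, which is tangential.
\end{itemize}
Projecting onto $\mathbf{n}_u$ and dividing by $\langle\mathbf{n},\mathbf{n}_u\rangle=w^{-1}$ shows that $M_{u,t}$ moves by MCF if and only if
\[
\partial_t u=-w H_u + H_{M_t}+\Phi_t(p,u,\nabla u),
\]
possibly after reparametrizing by tangential diffeomorphisms. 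Here $\Phi_t$ is smooth, collects the tangential corrections coming from $u\nabla H_{M_t}$ paired with $\nabla u$, and vanishes to second order at $(u,\nabla u)=(0,0)$.

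\textbf{Step 2: the structure of the right-hand side.} By Corollary \ref{coro1.2}, the right-hand side has the form
\[
F_t(p,u,\nabla u)+G^{\alpha\beta}_t(p,u,\nabla u)\,\nabla^2_{\alpha\beta}u .
\]
The coefficients $F_t,G_t$ are smooth in $(s,y)$ for $|s|+|y|\le\delta$. Since $|A_{M_t}|$ is uniformly bounded, and by Shi-type estimates for MCF (Ecker--Huisken) so is $|\nabla A_{M_t}|$ for $t>0$, all their $(s,y)$-derivatives up to order two are bounded uniformly in $p$ and $t$. The one-jet at $0$ is exactly $L_{x,t}u$: this is the linearization from Corollary \ref{coro1.2}, plus the fact that $H_{M_t}$ cancels the zeroth-order term. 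Hence
\[
\mathcal{Q}_t(u)=\big[F_t(p,u,\nabla u)-F_t(p,0,0)-dF_t(u,\nabla u)\big]+\big[G^{\alpha\beta}_t(p,u,\nabla u)-G^{\alpha\beta}_t(p,0,0)\big]\nabla^2_{\alpha\beta}u .
\]

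\textbf{Step 3: the estimates.} Taylor's theorem with uniform bounds on the second derivatives bounds the first bracket by $C(|u|^2+|\nabla u|^2)$ and the second by $C(|u|+|\nabla u|)|\nabla^2u|$; this gives \eqref{eq1.9}. For \eqref{eq1.10}, write
\[
G(u)\nabla^2u-G(v)\nabla^2 v=(G(u)-G(v))\nabla^2 u+(G(v)-G(0))\nabla^2(u-v).
\]
The first piece is bounded by $C(|u-v|+|\nabla(u-v)|)|\nabla^2u|$ using the mean value theorem. The second is bounded by $C\|v\|_{C^{0,1}}|\nabla^2(u-v)|$. The $F$-difference is bounded by $C(\|u\|_{C^{0,1}}+\|v\|_{C^{0,1}})(|u-v|+|\nabla(u-v)|)$, again by the integral form of the mean value theorem applied to the second-order remainder.

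\textbf{Main obstacle.} The delicate step is Step 1: deriving the moving-base equation, with the time derivative of the normal and the tangential reparametrization, and checking that no first-order term beyond $L_{x,t}$ survives. The point is that the constant $C$ must be uniform in $t$, and this uniformity relies on the bounded-curvature hypothesis along the flow.
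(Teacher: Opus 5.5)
\textbf{Verdict: genuine gap.} The paper gives no proof of this proposition; it quotes \cite{SX}. Your Taylor-expansion route in Steps 2--3 is the natural one, and the algebra is correct. The splitting of $G(u)\nabla^2u-G(v)\nabla^2v$ and the integral mean value bound on $F-F(0)-dF$ give \eqref{eq1.9}--\eqref{eq1.10}, with $C$ equal to a bound on the $(s,y)$-derivatives of $F_t,G_t$ up to order two.

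The gap is your claim that these derivatives are bounded uniformly in $p$ and $t$, which is exactly the point you flag as essential. $G_t$ involves only $A_{M_t}(p)$, but $F_t$ depends linearly on $\nabla A_{M_t}$, in two ways:
\begin{itemize}
\item the divergence in Corollary \ref{coro1.2} differentiates $\partial_y v(p,u,\nabla u)$ in $p$, and $v$ depends on $p$ through $A_{M_t}(p)$;
\item with $\partial_t\mathbf{n}=\nabla H$, your $\Phi_t$ is $u\langle\nabla H_{M_t},(I+uA)^{-1}\nabla u\rangle$.
\end{itemize}
Ecker--Huisken (Proposition \ref{propeh}) gives only $\sup_{M_t}|\nabla A|\le C\kappa(1+t^{-1/2})$, and at $t=0$ nothing bounds $\nabla A_{M_0}$ beyond smoothness. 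So your argument puts a factor $C(n,\kappa)(1+t^{-1/2})$ in front of $|u|^2+|\nabla u|^2$, not a constant uniform on $[0,T]$.

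This dependence is genuine, not an artifact of the expansion. The quadratic terms $\pm u\langle\nabla H,\nabla u\rangle$ coming from $\mathrm{div}(\partial_y v)$ and from $\Phi_t$ do cancel, by Codazzi, but cubic terms survive. For curves, write $b=1+u\kappa$. A direct computation shows that the $\kappa_\sigma$-dependent part of $\mathcal{Q}_t(u)$ is exactly
\[
\frac{u\,u_\sigma\,\kappa_\sigma\,(b^2+u_\sigma^2-1)}{b\,(b^2+u_\sigma^2)}\approx u\,u_\sigma\,\kappa_\sigma\,(2u\kappa+u_\sigma^2).
\]
Hence \eqref{eq1.9} with fixed $\delta$ and $C=C(n,\kappa)$ fails at points where $u_{\sigma\sigma}=0$ and $|\kappa_\sigma|\delta^2$ is large. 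This happens near $t=0$ whenever $\nabla A_{M_0}$ is unbounded, and in the paper's setting only $|A_{M_0}|\le\kappa$ is assumed.

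You can close the gap in either of two ways.
\begin{itemize}
\item Assume $\sup_t\sup_{M_t}|\nabla A|<\infty$, as in the bounded-geometry setting of \cite{SX}.
\item Prove the corrected estimate
\[
|\mathcal{Q}_t(u)|\le C\big(|u|^2+|\nabla u|^2+|\nabla^2u|(|u|+|\nabla u|)\big)+Ct^{-1/2}\big(|u|^2+|\nabla u|^2\big),
\]
together with the extra term $Ct^{-1/2}(\|u\|_{C^{0,1}}+\|v\|_{C^{0,1}})(|u-v|+|\nabla(u-v)|)$ in \eqref{eq1.10}.
\end{itemize}
The second option still suffices for the paper's application. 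The $t^{-1/2}$ weight never multiplies $\nabla^2 u$, because the coefficients $G_t$ remain uniformly bounded. Moreover, $r^{\frac{2}{n+4}}\|t^{-1/2}\|_{L^{n+4}(\Omega(x,r))}$ is bounded independently of $r$, so Proposition \ref{prop4.2} still follows.
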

	
	\section{Heat kernel estimates}
	In this section, we collect some results about heat kernel of Sch\"odinger operator. 
	\subsection{Harmonic radius}$\\$
	We first introduce the following parabolic Sobolev and H\"older norms
	\begin{definition}
		For any $\Omega\subset \IR^n\times \IR$, assume that
		\begin{equation}
			r=\min\{r':\Omega\subset B_{r'}(q)\times [t-r'^2,t]\}<\infty.
		\end{equation}
		Then, we set
		\begin{align}
			&\|u\|_{W_p^{2m,m}(\Omega)} 
			:=\sum_{|J|+2k\leq  2m} r^{|J|+2k-\frac{n+2}{p}}\|D^J \partial_t^ku\|_{L^p}, \\
			&\|u\|_{C^{m+\alpha, \frac{m+\alpha}{2}}(\Omega)}
			:=\sum_{|J|+2k\leq m} \left(r^{|J|+2k}\|D^J\partial_t^ku\|_{C^0}+r^{|J|+2k+\alpha}[D^J \partial_t^ku]_{\alpha,\frac{\alpha}{2}} \right),  
		\end{align}
		where $J$ is a multi-index. 
	\end{definition}
	Throughout this paper, $\Omega$ denotes a parabolic cylinder of the form $Q_r(x, t) = B_r(x) \times [t-r^2, t]$. We have the following Sobolev embedding properties.
	\begin{lemma}\cite{Kry24}\label{sob}
		For any $m \in \IN$, $n+2<p<\infty$ and $\alpha=1-(n+2)/p$, we have 
		\begin{align}
			W_p^{2m,m}(\Omega) \hookrightarrow C^{2m-1+\alpha,\frac{2m-1+\alpha}{2}}(\Omega).
		\end{align}
		In particular, there is a constant $C=C(n,m,p)>0$ such that 
		\begin{align}
			\|u\|_{C^{2m-1+\alpha,\frac{2m-1+\alpha}{2}}}
			\leq C  \|u\|_{W_p^{2m,m}(\Omega)}.
		\end{align}
	\end{lemma}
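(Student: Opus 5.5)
Since the norms are scale-invariant, I will first reduce to the case where $\Omega$ is the unit cylinder $Q_1 = B_1(0)\times[-1,0]$. For $\Omega = Q_r(q,t)$, set $\tilde u(y,s) = u(q+ry,\, t+r^2 s)$. The chain rule gives $D^J\partial_s^k \tilde u = r^{|J|+2k}(D^J\partial_t^k u)\circ\Phi$, where $\Phi$ is the affine rescaling. The change of variables contributes a Jacobian factor $r^{-(n+2)}$ to $\|\cdot\|_{L^p}^p$, so $\|D^J\partial_s^k\tilde u\|_{L^p(Q_1)} = r^{|J|+2k-\frac{n+2}{p}}\|D^J\partial_t^k u\|_{L^p(\Omega)}$. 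Parabolic H\"older seminorms scale as $[\cdot]_{\alpha,\alpha/2}\mapsto r^{\alpha}[\cdot]_{\alpha,\alpha/2}$. Hence both weighted norms are exactly invariant, and it suffices to prove the embedding on $Q_1$ with a constant $C(n,m,p)$.

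On $Q_1$ I will argue by induction on $m$, with the case $m=1$ as the core. For $m=1$, take $u\in W^{2,1}_p(Q_1)$ with $p>n+2$. Applying a bounded extension operator (reflection across the lateral boundary and the bottom face, which respects the anisotropic scaling), I may assume $u$ is compactly supported in a larger cylinder, with $\|u\|_{W^{2,1}_p(\mathbb{R}^{n+1})}\le C\|u\|_{W^{2,1}_p(Q_1)}$. I then write $u$ and $Du$ through the heat kernel:
\begin{equation*}
u=\Gamma*(\partial_t u-\Delta u),\qquad Du = D\Gamma*(\partial_t u-\Delta u).
\end{equation*}
The kernel satisfies $|D\Gamma(x,t)|\lesssim (|x|+|t|^{1/2})^{-(n+1)}$, which lies in $L^{p'}_{loc}$ for the parabolic dimension $n+2$ exactly when $(n+1)p'<n+2$, i.e.\ $p>n+2$. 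H\"older's inequality then bounds $\|u\|_{C^0}$ and $\|Du\|_{C^0}$.

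For the H\"older seminorm of $Du$, I estimate
\begin{equation*}
\|D\Gamma(\cdot-z_1)-D\Gamma(\cdot-z_2)\|_{L^{p'}}\lesssim d(z_1,z_2)^{1-\frac{n+2}{p}},
\end{equation*}
where $d$ is the parabolic distance. To do this I split into the region within distance $2d$ of $z_1$, where each term is controlled separately, and its complement, where I use the mean value bound with $|D^2\Gamma|\lesssim \rho^{-(n+2)}$. This yields $[Du]_{\alpha,\alpha/2}\le C\|u\|_{W^{2,1}_p}$. Alternatively, one can use a parabolic Morrey/Campanato argument: a Poincar\'e inequality on subcylinders $Q_\rho$ gives oscillation decay $\rho^{\alpha}$ for $Du$. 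The $t$-H\"older regularity of $u$ of order $(1+\alpha)/2$ follows in the same way. Since $[u]$ with exponent $(1+\alpha)/2$ in $t$ is part of the $C^{1+\alpha,\frac{1+\alpha}{2}}$ norm, I handle it by the same kernel splitting applied to $\Gamma$ instead of $D\Gamma$.

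For general $m$, I apply the $m=1$ case to $v=D^J\partial_t^k u$ for all $|J|+2k\le 2m-2$. Each such $v$ lies in $W^{2,1}_p$ with norm controlled by $\|u\|_{W^{2m,m}_p}$, so $v\in C^{1+\alpha,\frac{1+\alpha}{2}}$. Collecting these bounds gives every term of $\|u\|_{C^{2m-1+\alpha,\frac{2m-1+\alpha}{2}}}$: the derivatives $D^J\partial_t^k u$ with $|J|+2k\le 2m-1$ are either such a $v$ or one spatial derivative of one. One subtlety is terms like $\partial_t^k u$ with $2k=2m-1+\text{fraction}$; these appear only through the H\"older seminorm in $t$ of lower-order terms, which the $m=1$ estimate also covers.

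The main obstacle is the H\"older seminorm estimate near the boundary of $Q_1$, that is, constructing the extension with controlled anisotropic norms. I would handle it with higher-order reflection in $x$ and in $t$ on a cube-shaped cylinder, then pass to balls by covering, as in Krylov's treatment \cite{Kry24}. Any constant loss there depends only on $(n,m,p)$.
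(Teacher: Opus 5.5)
Your argument is correct. The paper gives no proof of this lemma; it simply imports it from Krylov. So your write-up supplies a proof rather than competing with one.

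What you give is the standard potential-theoretic proof:
\begin{itemize}
\item The weighted norms are exactly invariant under $(y,s)\mapsto(q+ry,\,t+r^2s)$, so the lemma reduces to $Q_1$ with a constant depending only on $(n,m,p)$.
\item After extension, $u=\Gamma*f$ and $Du=D\Gamma*f$ with $f=(\partial_t-\Delta)u\in L^p$.
\item The bound $|D\Gamma|\le C\rho^{-(n+1)}$ puts $D\Gamma$ in $L^{p'}_{loc}$ of the $(n+2)$-dimensional parabolic space, and $\rho^{-(n+1)}$ is $L^{p'}_{loc}$ exactly when $p>n+2$.
\item The near/far splitting gives $\|D\Gamma(z_1-\cdot)-D\Gamma(z_2-\cdot)\|_{L^{p'}}\le C d^{1-(n+2)/p}$, which is where $\alpha=1-(n+2)/p$ comes from.
\end{itemize}
The passage from $m=1$ to general $m$ is also right. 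Any $D^J\partial_t^k u$ with $|J|+2k=2m-1$ has $|J|$ odd, so it is a spatial derivative of some $D^{J'}\partial_t^k u$ with $|J'|+2k=2m-2$, to which the base case applies.

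The citation buys the paper brevity. Your route shows why the constant is scale-free, which is exactly what the paper relies on when it applies the lemma on cylinders of size comparable to $\sqrt{t}$ in its heat kernel derivative estimates.

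Three small points to tighten:
\begin{itemize}
\item In the far region, the mean value step must also absorb the time increment: $|t_1-t_2|\,|\partial_t D\Gamma|\le Cd^2\rho^{-(n+3)}\le Cd\rho^{-(n+2)}$ for $\rho\ge 2d$. Take the path first in $x$, then in $t$; it then stays at parabolic distance at least $\rho/2$ from the origin.
\item Your induction only uses the extension for $W^{2,1}_p$. So an even reflection in $t$ and a two-term, $C^1$-matching reflection across $\partial B_1$ suffice; higher-order reflections are not needed. As you implicitly use, no extension across the top face $t=0$ is needed either, because $\Gamma$ vanishes for negative times.
\item With the paper's definition of the H\"older norm (only $(\alpha,\alpha/2)$-seminorms of $D^J\partial_t^ku$ for $|J|+2k\le 2m-1$), the extra $(1+\alpha)/2$ time-H\"older bound you prove is not required, though it is harmless.
\end{itemize}
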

	\begin{definition}($W^{k,p}$ harmonic radius) The $W^{k,p}$ harmonic radius at $x$ and $n<p<\infty$, is the supremum of all $R>0$ such that  there exists a coordinate chart $\phi:B_R(x)\to \IR^n$ satisfying 
		\begin{itemize}
			\item $\Delta \phi^j=0$ on $B_{R}(x)$  and $\phi^j(x)=0$ for each $j$,
			\item $2^{-1}(\delta_{ij})\leq (g_{ij})\leq 2 (\delta_{ij})$,  in $B_{R}(x)$ as symmetric bilinear forms,
			\item $\sum_{1\leq |J|\leq k}R^{|J|-\frac{n}{p}}\|\partial^J g_{ij}\|_{L^p(B_R(x))}\leq 1$.
		\end{itemize}
		We denote this radius by $r_{harm,W^{k,p}}(x)$.
	\end{definition}
	\begin{lemma}\cite[Lemma 7.1]{CY}\label{lm2.1}
		Suppose that $M^{n}\subset \IR^{n+1}$ is a smooth complete submanifold such that
		\begin{equation}
			\begin{split}
				\sup\limits_{x\in M}|A|(x)\leq \kappa.
			\end{split}
		\end{equation}
		Then there exists $i_0=i_0(n,\kappa)$ such that the injective radius $inj_M\geq i_0$. Moreover, 
		\begin{equation}
			\begin{split}
				\frac{1}{2}\omega_nr^2\leq  \mathrm{Vol}(B_r(x))\leq 2\omega_n r^2
			\end{split}
		\end{equation}
		for any $x\in M $ and $0<r<i_0\leq 1$, where $\omega_n$ is the volume of a unit ball and $B_r(x)$ is the intrinsic ball of $M$.
	\end{lemma}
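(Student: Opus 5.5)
The plan is to derive both conclusions from intrinsic curvature bounds supplied by the Gauss equation, together with one extrinsic argument that rules out short closed geodesics. Throughout I read the volume bound as $\frac12\omega_n r^n\le \mathrm{Vol}(B_r(x))\le 2\omega_n r^n$; the exponent $2$ in the statement appears to be a typo for $n$.

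\textbf{Step 1 (curvature bounds).} By the Gauss equation, $R_{ijkl}=A_{ik}A_{jl}-A_{il}A_{jk}$. Hence every sectional curvature satisfies $|K|\le 2|A|^2\le 2\kappa^2$, and $\mathrm{Ric}\ge -2(n-1)\kappa^2 g$. Since $M$ is complete, Rauch comparison applies and gives that the conjugate radius is at least $\pi/(\sqrt{2}\kappa)$.

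\textbf{Step 2 (no short closed geodesics).} Let $\gamma$ be a closed geodesic of $M$ parametrized by arclength. Viewed in $\IR^{n+1}$ through $\mathbf{x}_0$, it is a smooth closed curve. Its acceleration $\mathbf{x}_0(\gamma)''=A(\gamma',\gamma')\mathbf{n}$ is purely normal, so its curvature is at most $\kappa$. Fenchel's theorem (total curvature of a closed curve is at least $2\pi$) then gives $\mathrm{length}(\gamma)\ge 2\pi/\kappa$. Only the local geometry of the immersion enters here; injectivity of $\mathbf{x}_0$ is never used.

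\textbf{Step 3 (injectivity radius).} Klingenberg's lemma for complete manifolds with $K\le K_0:=2\kappa^2$ states
\[
\mathrm{inj}_M\ge \min\Bigl\{\pi/\sqrt{K_0},\ \tfrac12\,\ell_{\min}\Bigr\},
\]
where $\ell_{\min}$ is the length of the shortest closed geodesic. By Step 2, $\ell_{\min}\ge 2\pi/\kappa$, so
\[
\mathrm{inj}_M\ge \min\{\pi/(\sqrt2\kappa),\ \pi/\kappa\}=\pi/(\sqrt2\kappa).
\]
I expect this to be the main obstacle. Klingenberg's lemma gives either a conjugate point or a geodesic loop at a point realizing the injectivity radius, and only at a global minimum is that loop smooth at its base point, i.e.\ a closed geodesic. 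A noncompact $M$ may have no point realizing the minimum. To handle this, I would first try the standard remedy: apply the lemma at a point that almost minimizes $\mathrm{inj}$ and pass to a limit via Cheeger–Gromov compactness, which the bounded geometry permits.

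The fallback avoids that issue. Use the graph representation instead: around each point, $M$ is a graph over $T_pM$ on a ball of radius $c/\kappa$ with gradient at most $1$. Any geodesic loop of length less than $c'/\kappa$ stays in this graph chart and is nearly straight, since its extrinsic curvature is at most $\kappa$. Projected to $T_pM$, such a loop has total turning less than $2\pi$ and so cannot close up. I would then set $i_0:=\min\{1,\ \pi/(\sqrt2\kappa),\ c'/\kappa\}\cdot\theta_n$, where $\theta_n\in(0,1]$ is chosen in Step 4.

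\textbf{Step 4 (volume).} For $r<\mathrm{inj}_M$ and $r<\pi/(\sqrt2\kappa)$:
\begin{itemize}
\item Günther's comparison ($K\le 2\kappa^2$) gives $\mathrm{Vol}(B_r(x))\ge \mathrm{Vol}_{2\kappa^2}(r)$.
\item Bishop–Gromov ($\mathrm{Ric}\ge-2(n-1)\kappa^2$) gives $\mathrm{Vol}(B_r(x))\le \mathrm{Vol}_{-2\kappa^2}(r)$.
\end{itemize}
Both model volumes equal $\omega_n r^n\bigl(1+O(\kappa^2r^2)\bigr)$, with constants depending only on $n$. Choosing $\theta_n$ small therefore keeps both ratios to $\omega_n r^n$ within $[\tfrac12,2]$ for all $0<r<i_0$, which completes the proof.
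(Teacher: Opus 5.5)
The paper gives no proof of this lemma; it only quotes \cite[Lemma 7.1]{CY}, so there is nothing in the text to compare your argument against. You are right that the exponent should be $n$ rather than $2$. Steps 1, 2 and 4 are sound. G\"unther's inequality needs $r\le\min\{\mathrm{inj}(x),\pi/(\sqrt2\kappa)\}$, which your choice of $i_0$ guarantees, and Bishop--Gromov needs no restriction on $r$.

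\textbf{Step 3.} Your first remedy does not work as stated. Cheeger--Gromov compactness at almost-minimizing points presupposes a uniform positive lower bound on the injectivity radius, which is exactly what you are trying to prove. Your graph-chart fallback is valid, but the obstacle is not really there, because the argument of Step 2 applies to geodesic loops, not just to closed geodesics.

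The pointwise form of Klingenberg's lemma says that a closest cut point of $p$ is either conjugate to $p$ along a minimizing geodesic, or the midpoint of a geodesic loop at $p$. So at every $p$ with $\mathrm{inj}(p)<\pi/(\sqrt2\kappa)$ you get a geodesic $\gamma:[0,\ell]\to M$ with $\ell=2\,\mathrm{inj}(p)$ and $\gamma(0)=\gamma(\ell)=p$. The curve $c=\mathbf{x}_0\circ\gamma$ is smooth on $[0,\ell]$, has unit speed, and satisfies $|c''|=|A(\gamma',\gamma')|\le\kappa$. Hence the angle between $c'(s)$ and $c'(\ell/2)$ is at most $\kappa|s-\ell/2|$. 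If $\ell<\pi/\kappa$, this angle is less than $\pi/2$, and
\[
\bigl\langle c(\ell)-c(0),\,c'(\ell/2)\bigr\rangle=\int_0^\ell\bigl\langle c'(s),\,c'(\ell/2)\bigr\rangle\,ds>0,
\]
which contradicts $c(\ell)=c(0)$. Equivalently, apply Fenchel's theorem to a closed curve with one corner, whose exterior angle is at most $\pi$. It follows that $\mathrm{inj}(p)\ge\min\{\pi/(\sqrt2\kappa),\pi/(2\kappa)\}=\pi/(2\kappa)$ at every point. No minimizing point, compactness argument or graph chart is needed.

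\textbf{Comparison of routes.} The usual argument for statements of this kind goes differently. One first represents $M$ locally as a graph over $T_xM$ with small gradient on a ball of radius comparable to $1/\kappa$. This gives $\mathrm{Vol}(B_r(x))\ge c_n r^n$ for small $r$. That volume bound, together with $|\mathrm{Rm}|\le C\kappa^2$ from the Gauss equation, is then fed into the Cheeger--Gromov--Taylor injectivity-radius estimate. Your route replaces the Cheeger--Gromov--Taylor theorem with Klingenberg's lemma plus the elementary straightness of short geodesics. It gives the explicit bound $\mathrm{inj}_M\ge\pi/(2\kappa)$, with both volume bounds following afterwards from G\"unther and Bishop--Gromov. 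The graph route obtains the volume lower bound first, directly from the extrinsic picture, and uses the injectivity-radius estimate as a black box.
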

	\begin{proposition}\label{prop1.8}\cite{And90}
		Let $B_{2r}(x)$ be a compact ball in the Riemannian manifold $(M,g)$. Assume that there are numbers $m\in \IN^*,\varepsilon,r>0,c_0,\cdots,c_m>0$ with 
		\begin{equation}
			|\nabla^j\mathrm{Rm}(y)|\leq c_j,r_{inj}(y)\geq r \text{ for all }y\in B_{2r}(x),j\in\{0,\cdots,m\}.
		\end{equation}
		Then, there exists a constant 
		\begin{equation}
			C_m=C(n,m,p,r,c_1,\cdots,c_m)>0
		\end{equation}
		such that $r_{harm,W^{m+2,p}}(x)\geq C_m$.
	\end{proposition}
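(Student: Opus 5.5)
This is Anderson's harmonic radius estimate, quoted from \cite{And90}; I would prove it by a contradiction-and-compactness argument, which avoids tracking constants explicitly.

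\textbf{Setup of the contradiction.} Fix the data $n,m,p,r,c_0,\dots,c_m$ and suppose no positive lower bound exists. Then there are pointed manifolds $(M_i,g_i,x_i)$ satisfying the hypotheses on $B_{2r}(x_i)$ with $r_i:=r_{harm,W^{m+2,p}}(x_i)\to 0$. The quantity $R^{|J|-n/p}\|\partial^J g\|_{L^p}$ is scale invariant, so I rescale $\tilde g_i=r_i^{-2}g_i$. After rescaling:
\begin{itemize}
\item the harmonic radius at $x_i$ equals $1$;
\item $|\tilde\nabla^j\widetilde{\mathrm{Rm}}|\le c_j r_i^{2+j}\to 0$ for $j\le m$;
\item the injectivity radius on $\tilde B_{2r/r_i}(x_i)$ is at least $r/r_i\to\infty$.
\end{itemize}
The harmonic radius is continuous in the base point with Lipschitz constant $1$. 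Hence on $\tilde B_{R}(x_i)$, for any fixed $R$ and large $i$, the harmonic radius is at least $1/2$ away from the boundary region. The only exception comes from the boundary of $B_{2r}$, which has been pushed to infinity.

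\textbf{Compactness.} I first obtain uniform harmonic coordinates of size comparable to $1$. The curvature bound and the injectivity radius bound give $C^{1,\alpha}$ harmonic coordinates of a fixed size (Jost--Karcher / Anderson's $C^{1,\alpha}$ theorem). In harmonic coordinates the metric satisfies
\[
\Delta g_{ij} = -2\mathrm{Ric}_{ij} + Q(g,\partial g).
\]
Elliptic $L^p$ theory then bootstraps the bounds on $\nabla^j\mathrm{Rm}$, $j\le m$, to uniform $W^{m+2,p}$ bounds on $g_{ij}$, at least for the covariant-derivative-to-coordinate-derivative conversion.

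By Arzel\`a--Ascoli and compact embedding, a subsequence converges in $W^{m+2,q}$ for $q<p$, and in $C^{m+1,\alpha'}$, to a complete limit $(M_\infty,g_\infty,x_\infty)$. The limit is flat, since $\mathrm{Rm}\to 0$, and has infinite injectivity radius. Therefore it is isometric to $\mathbb{R}^n$.

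\textbf{Contradiction.} On $\mathbb{R}^n$ the Euclidean coordinates are harmonic with $g=\delta$, so the harmonic radius of $\mathbb{R}^n$ is $\infty$. To contradict $r_i=1$, I take the harmonic coordinates of the limit on $B_{10}$, which are Euclidean, and solve the Dirichlet problems
\[
\Delta_{\tilde g_i}\phi_i^j=0 \ \text{ on } \tilde B_{10}(x_i), \qquad \phi_i^j=\text{pullback of Euclidean coordinates on } \partial \tilde B_{10}(x_i).
\]
Elliptic estimates give $\phi_i\to$ the identity in $W^{m+3,q}$. Hence on $B_2$, for large $i$, the metric in the $\phi_i$ coordinates is $W^{m+2,p}$-close to $\delta$. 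This forces the harmonic radius at $x_i$ to be at least $2$, a contradiction.

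\textbf{Main obstacle.} The delicate point is that convergence holds only in $W^{m+2,q}$ with $q<p$, whereas the harmonic radius is defined with the exponent $p$. I would circumvent this in one of two ways. The first is to prove the bound for a slightly larger exponent $p'>p$, which is allowed because the curvature bounds are $L^\infty$. The second is to use the equation $\Delta g_{ij}=-2\mathrm{Ric}_{ij}+Q$ directly, which gives $W^{m+2,p'}$ convergence for every finite $p'$. In the latter approach, $\mathrm{Ric}$ and its $m$ derivatives are only bounded in $L^\infty$ but tend to zero, so $\|\partial^{m}\mathrm{Ric}\|_{L^{p'}}\to 0$ in fixed coordinate balls, which is exactly what is needed.
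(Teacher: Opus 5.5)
The paper gives no proof of this statement and simply defers to Anderson, whose argument is the same blow-up scheme you describe, so your route matches the cited source. As written, however, the compactness step contains a real error, and only one of your two proposed repairs fixes it.

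\textbf{The compactness error.} A compact embedding always costs derivatives. Uniform $W^{m+2,p}$ bounds on $g_i$ give strong convergence only in $C^{m+1,\alpha'}$ (and $W^{m+1,q}$), and merely weak convergence in $W^{m+2,p}$; no embedding $W^{m+2,p'}\hookrightarrow W^{m+2,q}$ is compact, whatever the exponents. So the claimed $W^{m+2,q}$ convergence of the metrics is unjustified, and your first remedy (raise the exponent to $p'>p$) fails for the same reason.

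This is not cosmetic. To contradict $r_i=1$ you need $\sum_{1\le|J|\le m+2}2^{|J|-n/p}\|\partial^J h_i\|_{L^p(B_2)}\le 1$ for the metric $h_i$ in the new coordinates. Weak convergence $h_i\rightharpoonup\delta$ does not make the top-order term small. (The maps $\phi_i$ are not the problem: $g_i^{ab}\partial_a\partial_b\phi_i=0$ gives them uniform $W^{m+4,p}$ bounds, hence strong $W^{m+3,p}$ convergence.)

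Your second remedy is the one that works, and it is the heart of the proof rather than an optional patch. In the $\phi_i$-coordinates one has $h_i^{ab}\partial_a\partial_b(h_i-\delta)_{kl}=-2\mathrm{Ric}(h_i)_{kl}+Q(h_i,\partial h_i)$. Here $Q=O(|\partial h_i|^2)\to 0$ in $C^{m,\alpha'}$. Also $\partial^{\le m}\mathrm{Ric}(h_i)\to 0$ uniformly, because the covariant derivatives are $O(r_i^{2+j})$ and the Christoffel corrections are controlled by the $C^{m+1,\alpha'}$ bounds. Interior $L^p$ estimates, applied after differentiating $m$ times, then give $h_i\to\delta$ strongly in $W^{m+2,p}$ on $B_2$, which is what the contradiction requires.

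\textbf{The Lipschitz step.} Deriving a lower bound $1/2$ for the harmonic radius on all of $\tilde B_R(x_i)$ from its $1$-Lipschitz dependence on the base point is a non sequitur. Lipschitz continuity only gives control within distance $1/2$ of $x_i$; Anderson obtains such bounds by point-selection. You never actually use this step, though, because Jost--Karcher already supplies the charts.

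\textbf{A shorter direct route.} The same observation makes the contradiction unnecessary. Jost--Karcher in the original scale gives a harmonic chart centred at $x$ on a ball of radius $\rho(n,c_0,r)$, with $\frac12\delta\le g\le 2\delta$ and $C^{1,\alpha}$ control. Your bootstrap of $g^{ab}\partial_a\partial_b g_{kl}=-2\mathrm{Ric}_{kl}+Q(g,\partial g)$ then gives $\|g\|_{W^{m+2,p}(B_{\rho/2})}\le C(n,m,p,r,c_0,\dots,c_m)$. Since $|J|-n/p>0$ for $|J|\ge 1$, shrinking the radius to some $R(n,m,p,r,c_0,\dots,c_m)$ makes the normalised sum at most $1$. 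This direct argument is shorter and yields an effective constant. The blow-up argument is what you need when only Ricci bounds are available, which is Anderson's setting.

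\textbf{Dependence on $c_0$.} Both arguments produce a constant that also depends on $c_0$. The printed statement omits $c_0$, but it cannot be dropped: rescalings of hyperbolic space to curvature $-K$ have $\nabla\mathrm{Rm}=0$ and infinite injectivity radius, yet harmonic radius of order $K^{-1/2}$.
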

	
	\begin{corollary}\label{coro3.5}
		Suppose that $M^{n}\subset \IR^{n+1}$ is a smooth complete submanifold such that
		\begin{equation}
			\begin{split}
				\sup\limits_{x\in M}|A|(x)\leq \kappa.
			\end{split}
		\end{equation}
		Then there exists $r_\kappa=r_\kappa(n,\kappa)$ such that $r_{harm,W^{2,p}}(x)\geq r_\kappa$.
	\end{corollary}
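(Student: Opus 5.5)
The plan is to derive Corollary \ref{coro3.5} from Proposition \ref{prop1.8} with $m=0$, so that $r_{harm,W^{2,p}}$ is controlled. To do this we need a uniform lower bound on the injectivity radius and a uniform bound on $|\mathrm{Rm}|$, both depending only on $n$ and $\kappa$.

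First, the curvature bound follows from the Gauss equation. For a hypersurface in $\IR^{n+1}$ we have
\begin{equation*}
R_{ijkl}=h_{ik}h_{jl}-h_{il}h_{jk},
\end{equation*}
and hence $|\mathrm{Rm}|\leq 2|A|^2\leq 2\kappa^2=:c_0$ everywhere on $M$. Second, Lemma \ref{lm2.1} gives $inj_M\geq i_0(n,\kappa)$. We now fix $r=\min\{i_0,1\}/2$, which depends only on $(n,\kappa)$.

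For any $x\in M$, the ball $B_{2r}(x)$ is relatively compact because $M$ is complete, so Hopf--Rinow applies. On this ball we have $|\mathrm{Rm}|\leq c_0$ and $r_{inj}\geq i_0\geq r$. Applying Proposition \ref{prop1.8} with $m=0$ then gives $r_{harm,W^{2,p}}(x)\geq C_0(n,p,r,c_0)$. The right-hand side depends only on $n$, $\kappa$ and the fixed $p$, so we set $r_\kappa:=C_0$. Since this bound is uniform in $x$, the corollary follows.

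The main obstacle is whether Proposition \ref{prop1.8} genuinely covers $m=0$. The statement writes $m\in\IN^*$, and the constants listed run over $c_1,\dots,c_m$, which suggests derivative bounds on curvature are needed. Anderson's theorem does in fact give $W^{2,p}$ harmonic radius bounds (equivalently $C^{1,\alpha}$) from $|\mathrm{Ric}|$ and injectivity radius bounds alone, and that is the result I would cite for the $m=0$ case. If one insists on using the proposition exactly as stated with $m\geq 1$, $\nabla\mathrm{Rm}$ would have to be bounded, and that requires $|\nabla A|$, which is not assumed. One could obtain such a bound by replacing $M$ with the time-$t$ slice of the flow, where Shi-type estimates apply, but that route is circular for the present purpose. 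So I would rely on the zeroth-order version of Anderson's estimate.
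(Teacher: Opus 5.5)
Your proposal is correct and follows the paper's route: the paper's proof is the single line that the corollary follows from Lemma \ref{lm2.1} and Proposition \ref{prop1.8}. Your argument makes explicit what that line leaves implicit, namely the Gauss-equation bound $|\mathrm{Rm}|\le 2\kappa^2$ and the use of Proposition \ref{prop1.8} in its zeroth-order form ($m=0$, i.e.\ Anderson's $W^{2,p}$ harmonic radius bound from curvature and injectivity radius bounds alone), and your caveat that the proposition as literally stated with $m\in\IN^*$ does not cover this case is accurate, so citing Anderson's estimate directly is the right fix.
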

	\begin{proof}
		The Corollary follows by Lemma \ref{lm2.1} and  Proposition \ref{prop1.8}.
	\end{proof}
	While constructing a time-dependent coordinate system that remains harmonic on each time slice of an evolving manifold is generally unattainable, the intrinsic regularizing effect of the geometric flow ensures that the metric becomes progressively smoother. Consequently, explicit control over the harmonic radius is not strictly required to establish heat kernel estimates.
	\begin{definition}($W^{m,p}$ radius) \label{def3.8}Let $((M,g(t))_{t\in [0,T]}$ be an evolving manifold. The $\Lambda$-$W^{m,p}$ harmonic radius at $(x,t)$ and $n+2<p<\infty$, is the supremum of all $R>0$ such that  there exists $\Lambda>1$ and a coordinate chart $\phi:B_R(x)\times [t-R^2,t]\to \IR^n\times \IR^+$ satisfying 
		
		\begin{itemize}
			\item    The metric $(g_{ij})$ is uniformly equivalent to the Euclidean metric $(\delta_{ij})$ within the parabolic cylinder $\Omega_R(x, t)$. 
			\begin{equation}
				\Lambda
				^{-1}(\delta_{ij}) \leq (g_{ij}) \leq \Lambda(\delta_{ij}).
			\end{equation}
			
			\item Scale-Invariant Sobolev Regularity: The metric satisfies a bounded $L^p$ norm for its derivatives up to order $m$
			\begin{equation}\label{eqsob}
				\sum_{1 \leq |J| \leq m} R^{|J| - \frac{n+2}{p}} \|\partial^J g_{ij}\|_{L^p(\Omega_R(x, t))} \leq \Lambda.
			\end{equation}
		\end{itemize}
		where $\Omega_R(x,t)=B_{R}(x)\times [t-R^2,t]$.    We denote this radius by $r_{\Lambda,W^{m,p}}(x)$.
	\end{definition}
	\begin{corollary}
		Let $\{M_t\}_{t\in [0,T]}$ be a mean curvature flow.  Assume that
		\begin{equation}
			\sup_{t\in [0,T]}\sup_{x\in M_t}|A|(x,t)\leq \kappa
		\end{equation}
		Then, there exists a constant $\Lambda$ and
		\begin{equation}
			C_m=C_m(n,m,\kappa)>0
		\end{equation}
		such that $r_{\Lambda,W^{m+2,p}}((x,t))\geq C_m\sqrt{t}$ for any $x\in M,0<t\leq \min \{1,T\}$.
	\end{corollary}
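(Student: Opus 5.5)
The plan is to combine the interior derivative estimates of Ecker--Huisken type (Shi-type estimates) for mean curvature flow with a parabolic rescaling argument, and then build the required coordinates out of a time-independent chart. First, from $|A|\leq\kappa$ on $[0,T]$, the standard Ecker--Huisken interior estimates give
\[
|\nabla^j A|(x,t)\leq \frac{C_j(n,\kappa)}{t^{j/2}},\qquad 0<t\leq \min\{1,T\}.
\]
By the Gauss equation $\mathrm{Rm}=A*A$, the same type of bounds hold for $|\nabla^j\mathrm{Rm}|$ with $C_j'(n,\kappa)$. Fix $(x,t)$ and rescale parabolically by $\lambda=t^{-1/2}$, i.e.\ $\tilde g(s)=\lambda^2 g(t+s/\lambda^2)$ for $s\in[-1,0]$; here we may use the interval $[t/2,t]$, on which the derivative bounds are uniform. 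The rescaled flow has $|\tilde A|\leq \kappa\sqrt t\leq\kappa$ and $|\tilde\nabla^j\tilde A|\leq C_j$ on $s\in[-1/2,0]$.

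Next we produce a chart on the rescaled flow. By Lemma \ref{lm2.1} applied at the single time $s=0$, the injectivity radius is at least $i_0(n,\kappa)$, so Proposition \ref{prop1.8} yields a harmonic chart $\phi$ on $B_{c}(x)$, with respect to $\tilde g(0)$, with $W^{m+2,p}$ control; here $c=c(n,m,\kappa)$. We use this same spatial chart for all $s\in[-c^2,0]$, so it need not be harmonic at earlier times; as the remark before Definition \ref{def3.8} says, only uniform equivalence and Sobolev bounds are needed. To control the metric coefficients in time, we use the MCF evolution $\partial_s\tilde g_{ij}=-2\tilde H\tilde h_{ij}$, so $|\partial_s \tilde g|_{\tilde g}\leq 2n\kappa^2$. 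Integrating over a time interval of length $c^2$ keeps $\tilde g(s)$ uniformly equivalent to $\tilde g(0)$, and hence $\Lambda$-equivalent to $\delta_{ij}$ after shrinking $c$ if needed.

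The main obstacle is controlling the spatial coordinate derivatives $\partial^J\tilde g_{ij}(s)$ for $s<0$ in the fixed chart. Writing $\partial\tilde g=\Gamma*\tilde g$ and $\Gamma(s)=\Gamma(0)+\int\partial_s\Gamma$, we have $\partial_s\Gamma=\tilde g^{-1}*\tilde\nabla(\tilde H\tilde h)$. This is bounded by $C_1$, and inductively $\partial^J\partial_s\Gamma$ is bounded by the higher $C_j$ together with the $W^{m+2,p}$ bounds of the chart at time $0$ and Sobolev embedding (Lemma \ref{sob}). This gives $L^p$ bounds on $\partial^J\tilde g$ on the cylinder $B_c\times[-c^2,0]$, uniform in $(n,m,\kappa)$. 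Time derivatives are handled in the same way from the evolution equation.

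Finally we scale back. The scale-invariant quantity \eqref{eqsob} is invariant under parabolic rescaling, so a radius $c$ for $\tilde g$ becomes a radius $c\sqrt t$ for $g$. This gives $r_{\Lambda,W^{m+2,p}}(x,t)\geq C_m\sqrt t$ with $C_m=c(n,m,\kappa)$ and $\Lambda$ depending only on $n$.
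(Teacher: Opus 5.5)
Your proposal is correct and follows essentially the same route as the paper. Both arguments use Ecker--Huisken bounds on $\nabla^j A$, take an Anderson harmonic chart for $g(t)$ of radius comparable to $\sqrt{t}$ and freeze it for earlier times $s\in[t/2,t]$, get uniform equivalence of $g(s)$ from $\partial_s g=-2HA$, and control the Sobolev norms of the coefficients by integrating $\partial_s\Gamma=\nabla(A*A)$ in time. Your explicit parabolic rescaling and your order-by-order induction on $\partial^J\Gamma$ spell out the scaling step and the ``Gronwall'' step that the paper only sketches.
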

	
	\begin{proof}
		By Proposition \ref{propeh}, we have
		\begin{equation}
			\sup\limits_{x\in M_t}|\nabla^m A|^2\leq C_m\kappa^2(1+t^{-1})^m.
		\end{equation}
		For any $x_0\in M_t$, by Proposition \ref{prop1.8}, we can choose a $W^{m+2,p}$-harmonic coordinate with respect $g(t)$ and the radius is bound below by $C_m\sqrt{t}$. They we need to show that $g^{ij}(s)$ in such a local coordinates satisfies the properties in Definition \ref{def3.8}. For any vector $v \in T_x M$, the change in its length satisfies
		\begin{equation}
			\left| \partial_s \ln(g_{ij}v^i v^j) \right| = \left| \frac{-2Hh_{ij}v^i v^j}{g_{ij}v^i v^j} \right| \leq 2|H| \cdot |A| \leq C
		\end{equation}
		Integrating from $s$ to $t$, we obtain
		\begin{equation}
			e^{-C(t-s)} g_{ij}(t) \leq g_{ij}(s) \leq e^{C(t-s)} g_{ij}(t).
		\end{equation}
		In harmonic coordinates, the initial metric $g(t)$ satisfies
		\begin{equation}
			C^{-1}\delta_{ij} \leq g_{ij} \leq C\delta_{ij}
		\end{equation}
		Therefore, the metric $g(s)$ satisfies
		\begin{equation}
			C^{-1}\delta_{ij} \leq g_{ij}(s) \leq C\delta_{ij}. 
		\end{equation}
		Similarly, we can prove \eqref{eqsob} for $s\in [t/2,t]$. For $s\in [t/2,t]$,
		\begin{equation}
			\sup\limits_{x\in M_s}|\nabla^m A|^2\leq C_m\kappa^2t^{-m}.
		\end{equation}
		The evolution of the Christoffel symbols $\Gamma_{ij}^k$ is given by
		\begin{equation}
			\frac{\partial}{\partial \tau} \Gamma_{ij}^k = g^{kl} \left( \nabla_i (H h_{jl}) + \nabla_j (H h_{il}) - \nabla_l (H h_{ij}) \right)
		\end{equation}
		Briefly, this is expressed as 
		\begin{equation}
			\frac{\partial \Gamma}{\partial \tau} = \nabla(A \ast A).
		\end{equation}
		Consequently, the evolution of the $k$-th order derivatives of the metric $\nabla^k g$ (with respect to a fixed background connection) is controlled by $A$ and its derivatives
		up to order $k$
		\begin{equation}
			\left| \frac{\partial}{\partial \tau} \nabla^k g \right| \le C(n, k) \sum_{j=0}^k |\nabla^j A| \cdot |\nabla^{k-j} A|
		\end{equation}
		We consider the evolution of the $W^{k,p}$ norm
		\begin{equation}
			\|g(s)\|_{W^{k,p}(g_s)} \le \|g(t)\|_{W^{k,p}(g_t)} + \int_s^t \left\| \frac{\partial}{\partial \tau}  g(\tau) \right\|_{W^{k,p}} d\tau
		\end{equation}
		Since all $|\nabla^j A|$ are bounded for $j \le k$ on the time interval $[s,t]$, the integral remains finite. By Gronwall's Inequality, we have
		\begin{equation}
			\|g(s)\|_{W^{k,p}} \le e^{C(t-s)} \|g(t)\|_{W^{k,p}}
		\end{equation}
		Thus, the Sobolev regularity $W^{k,p}$ is preserved at time $s$.
	\end{proof}
	To get the high order estimates of the heat kernel. We need the following Clader\'on-Zygmund inequality:
	\begin{lemma}\label{Lp-estimate}\cite{GT}
		Let $u\in W^{2,p}_{loc}\cap L^\infty(\Omega),1<p<\infty$ be a strong solution of the equation 
		\begin{equation}
			(\partial_t-Lu)=f.
		\end{equation}
		Assume that the linear operator $L$ has the form
		\begin{equation}
			Lu=a^{ij}\partial^2_{ij}u+b^i\partial_i u+cu.
		\end{equation}
		The coefficients of $L$ satisfy, for positive constants $\Lambda$,
		\begin{equation}
			\begin{split}
				& a^{ij}\in C^0(\Omega),\; b^i \in L^\infty(\Omega),c,f\in L^p(\Omega);\\
				&\Lambda^{-1}|\xi|^2\leq a^{ij}\xi_i\xi_j\leq \Lambda |\xi|^2,\quad \forall \xi \in \IR^n;\\
				&|a^{ij}|,r|b^i|,r^2\|c\|_{L^p}\leq \Lambda,
			\end{split}
		\end{equation}
		where $i,j=1,\cdots,n$. Then, there exists a constant $C=C(n,p,\Lambda)>0$ so that
		\begin{equation}
			\|u\|_{W^{2,p}(\Omega')}\leq C (\|u\|_{L^\infty(\Omega)}+\|f\|_{L^p(\Omega)}).
		\end{equation}
	\end{lemma}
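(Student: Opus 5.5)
We treat Lemma \ref{Lp-estimate} as the interior parabolic Calder\'on--Zygmund estimate, in the scale-invariant norms of the Definition above. The plan is to reduce to the constant-coefficient heat equation by a perturbation (freezing) argument, and then add localization and absorption steps. By translation and the parabolic scaling $(x,t)\mapsto (x/r,t/r^2)$, we may assume $\Omega=Q_1$. Let $\Omega'\Subset\Omega$ be a smaller concentric cylinder, say $Q_{1/2}$. The normalizations $r|b^i|\le\Lambda$ and $r^2\|c\|_{L^p}\le \Lambda$ are exactly the scale-invariant forms, so after rescaling they read $|b|\le \Lambda$ and $\|c\|_{L^p(Q_1)}\le\Lambda$.

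\emph{Step 1 (model case).} For $a^{ij}$ constant with ellipticity $\Lambda$, apply a linear change of variables to reach $\partial_t-\Delta$. For the heat equation, the bound $\|D^2 w\|_{L^p}+\|\partial_t w\|_{L^p}\le C\|(\partial_t-\Delta)w\|_{L^p}$ for compactly supported $w$ is the classical parabolic Calder\'on--Zygmund theorem. It follows from the $L^2$ bound via Fourier transform and a Calder\'on--Zygmund decomposition adapted to parabolic cylinders, which gives weak $(1,1)$ and then interpolation.

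\emph{Step 2 (freezing coefficients).} Since $a^{ij}\in C^0(\overline\Omega)$, it is uniformly continuous. Choose $\rho$ so that $|a^{ij}(z)-a^{ij}(z_0)|\le\epsilon$ on $Q_\rho(z_0)$. Take a cutoff $\eta$ supported in $Q_\rho(z_0)$ and set $w=\eta u$. Then
\[
(\partial_t-a^{ij}(z_0)\partial_{ij})w=\eta f+(a^{ij}-a^{ij}(z_0))\partial_{ij}w+\text{terms in } \eta b\partial u,\ \eta c u,\ u\,\partial\eta,\ \partial u\,\partial\eta,\ u\,\partial^2\eta .
\]
Step 1 controls $\|D^2w\|_{L^p}+\|\partial_t w\|_{L^p}$ by $\epsilon\|D^2 w\|_{L^p}$ plus lower-order terms. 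Taking $\epsilon=1/(2C)$ absorbs the first term. The term $\|cu\|_{L^p}$ is bounded by $\|c\|_{L^p}\|u\|_{L^\infty}$, which is the reason the hypothesis pairs $c\in L^p$ with $u\in L^\infty$.

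\emph{Step 3 (covering and interpolation).} Cover $\Omega'$ by finitely many cylinders $Q_{\rho}$ and sum. What remains are first-derivative terms $\|\partial u\|_{L^p}$ on slightly larger cylinders. We handle these with the interpolation inequality $\|\partial u\|_{L^p}\le \delta\|D^2u\|_{L^p}+C_\delta\|u\|_{L^p}$, together with $\|u\|_{L^p}\le C\|u\|_{L^\infty}$ on bounded domains. The absorption uses a nested family of cylinders $Q_s$, $1/2\le s<1$, and the standard iteration lemma: if $\Phi(s)\le \tfrac12\Phi(s')+C(s'-s)^{-k}A$, then $\Phi(1/2)\le CA$. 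This removes the higher-order terms on the right-hand side and gives $\|u\|_{W^{2,p}(\Omega')}\le C(\|u\|_{L^\infty}+\|f\|_{L^p})$. Rescaling back recovers the scale-invariant form of the estimate.

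The main obstacle is the dependence of the constant. $C$ is claimed to depend only on $(n,p,\Lambda)$, but the freezing radius $\rho$ depends on the modulus of continuity of $a^{ij}$, not just on $\Lambda$. I would therefore make the dependence on that modulus explicit. In the paper's applications the coefficients come from the metric in the $\Lambda$-$W^{m,p}$ coordinates of Definition \ref{def3.8}, where Lemma \ref{sob} gives a uniform H\"older modulus controlled by $\Lambda$. So the modulus is itself controlled by $\Lambda$ there, and the stated dependence holds in those applications. Citing \cite{GT} and Krylov for Step 1 keeps the argument short.
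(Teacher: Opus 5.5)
Your proposal is correct. It is the standard argument behind the paper's bare citation of Gilbarg--Trudinger (Theorem 9.11 there is the elliptic template you are transcribing to the parabolic setting), and the paper gives no proof beyond that citation: Calder\'on--Zygmund estimates for the constant-coefficient heat operator, freezing $a^{ij}$ on small cylinders, cutoff, interpolation and absorption.

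You are right that $C$ must also depend on the modulus of continuity of $a^{ij}$ (and on $\Omega'\subset\Omega$), which the statement omits. One small correction: with unnormalized $L^p$ norms, $r^2\|c\|_{L^p}\le\Lambda$ is not scale invariant. The invariant hypothesis is $r^{2-(n+2)/p}\|c\|_{L^p}\le\Lambda$, and $f$ then enters as $r^{2-(n+2)/p}\|f\|_{L^p}$; this corrected normalization, not the literal one, is what your rescaling to $Q_1$ actually needs.
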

	\begin{remark}
		In \cite{GP}, they showed that if $M^n$ has bounded Ricci curvature and a
		positive injectivity radius then one have $L^p$-Calderon-Zygmund inequality with constants depending only on $n,p ,\|Ric\|_{L^\infty}$ and the injectivity radius one can generalize their methods to parabolic equations. 
	\end{remark}
	
	\subsection{Heat kernel over a manifold}
	$\\$
	
	In the rest of the section, we assume that $T$ small enough and $0<T<r_\kappa^2$.
	
	\begin{proposition}\label{prop2.3}
		Let $M^n \subset \mathbb{R}^{n+1}$ be a hypersurface with the second fundamental form satisfying $\sup_M |A| \leq \kappa$. Let $G(x,t;y,s)$ denote the heat kernel associated with the heat operator $\partial_t - \Delta_M$. For any $x, y \in M$ and $0 < s < t \leq T$, the following estimate holds:
		\begin{equation}\label{eq3.16}
			\begin{split}
				0<G(x,t;y,s)&\leq\frac{C}{(t-s)^{\frac{n}{2}}}\exp\bigg(-\frac{d^2(x,y)}{4D(t-s)}\bigg),\\
				|\nabla_xG(x,t;y,s)|&\leq\frac{C}{\sqrt{t-s}(t-s)^{\frac{n}{2}}}\exp\bigg(-\frac{d^2(x,y)}{4D(t-s)}\bigg),\\
				|\nabla^2_xG(x,t;y,s)|&\leq \frac{C}{(t-s)(t-s)^{\frac{n}{2}}}\exp\bigg(-\frac{d^2(x,y)}{4D(t-s)}\bigg).\\
			\end{split}
		\end{equation}
	\end{proposition}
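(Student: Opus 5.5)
The plan is to reduce the estimate to the Gaussian upper bound for the scalar heat kernel on a manifold with Ricci curvature bounded below and locally Euclidean volume growth. The derivative bounds then follow from local parabolic regularity at the natural scale $\sqrt{t-s}$, carried out in harmonic coordinates. Since $M$ is a fixed manifold and the operator $\partial_t-\Delta_M$ is time independent, $G(x,t;y,s)=G(x,y,t-s)$. It therefore suffices to treat $\tau:=t-s\in(0,T]$.

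\textbf{Step 1 (positivity and Gaussian upper bound).} By the Gauss equation, $|\mathrm{Rm}|\le 2\kappa^2$, so $\mathrm{Ric}\ge -(n-1)2\kappa^2 g$. Positivity of $G$ is standard for the minimal heat kernel on a complete manifold, via the strong maximum principle. For the upper bound we invoke the Li--Yau estimate (or Grigor'yan's/Saloff-Coste's form under $\mathrm{Ric}\ge -K$):
\begin{equation*}
G(x,y,\tau)\le \frac{C}{\mathrm{Vol}(B_{\sqrt\tau}(x))^{1/2}\mathrm{Vol}(B_{\sqrt\tau}(y))^{1/2}}\exp\Big(-\frac{d^2(x,y)}{4D\tau}+CK\tau\Big).
\end{equation*}
Lemma \ref{lm2.1} applies since $\sqrt\tau\le\sqrt T<r_\kappa\le i_0$. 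It gives $\mathrm{Vol}(B_{\sqrt\tau})\ge c\,\tau^{n/2}$; we read its volume bound as the $n$-dimensional one, $\frac12\omega_n r^n\le \mathrm{Vol}(B_r)\le 2\omega_n r^n$. The factor $e^{CK\tau}$ is absorbed since $\tau\le T\le 1$. This yields the first line of \eqref{eq3.16} for any fixed $D>1$.

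\textbf{Step 2 (gradient and Hessian).} Fix $x,y$ and $\tau$, and set $r=\min\{\sqrt\tau/2,\, r_\kappa/2\}$; since $T<r_\kappa^2$, in fact $r\sim\sqrt\tau$. By Corollary \ref{coro3.5} there is a $W^{2,p}$ harmonic chart on $B_{2r}(x)$ with $p>n+2$. In that chart $\Delta_M u=g^{ij}\partial_{ij}u$, because the harmonic coordinates kill the first-order term. The coefficients are continuous and uniformly elliptic, with $W^{1,p}$ control that is scale invariant. Consider $u(z,\sigma)=G(z,y,\sigma)$ on the cylinder $\Omega=B_{2r}(x)\times[\tau-4r^2,\tau]$ and rescale this cylinder to unit size.

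The interior estimate of Lemma \ref{Lp-estimate}, applied with $f=0$, bounds $\|u\|_{W^{2,1}_p}$ on the half cylinder by $\|u\|_{L^\infty(\Omega)}$. Differentiating the equation once, which the $W^{1,p}$ regularity of $g_{ij}$ permits, and applying the estimate again gives $W^{4,2}_p$ control. Lemma \ref{sob} then bounds $r|\nabla u|+r^2|\nabla^2u|$ at $(x,\tau)$ by $C\sup_\Omega|u|$.

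\textbf{Step 3 (controlling the sup).} On $\Omega$ we have $\sigma\in[\tau/2,\tau]$ and $d(z,y)\ge d(x,y)-2r\ge d(x,y)-\sqrt\tau$. Step 1 then bounds $\sup_\Omega u$ by
\begin{equation*}
C\tau^{-n/2}\exp\Big(-\frac{(d(x,y)-\sqrt\tau)_+^2}{4D'\tau}\Big)\le C\tau^{-n/2}\exp\Big(-\frac{d^2(x,y)}{4D\tau}\Big)
\end{equation*}
for any $D>D'$, using $(a-b)^2\ge (1-\epsilon)a^2-C_\epsilon b^2$. Dividing by $r$ and $r^2$ respectively gives the second and third lines of \eqref{eq3.16}.

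\textbf{Main obstacle.} The delicate step is the second-derivative bound. The chart provides only $g\in W^{2,p}$, hence $C^{1,\alpha}$. To get pointwise $\nabla^2_x G$, including the Christoffel terms in the covariant Hessian, we need one more derivative of the equation than Lemma \ref{Lp-estimate} gives directly. I would first try to differentiate the equation in the harmonic chart and treat $\partial g^{ij}\cdot\partial_{ij}u$ as an $L^p$ right-hand side. If that falls short, the fallback is Proposition \ref{prop1.8} with $m=1$, which gives $W^{3,p}$ harmonic radius using bounds on $\nabla \mathrm{Rm}$. That route requires $|\nabla A|$ bounded, which is not part of the hypothesis. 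Alternatively one can use Hamilton/Cheng--Yau type Hessian estimates for positive solutions of the heat equation, which need only curvature bounds.
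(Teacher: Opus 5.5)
Your proposal is correct and takes essentially the same route as the paper. Both use Li--Yau's Gaussian bound together with the volume lower bound of Lemma \ref{lm2.1}, then the Calder\'on--Zygmund estimate of Lemma \ref{Lp-estimate} in $W^{2,p}$ harmonic coordinates on a cylinder of scale $\sqrt{t-s}$, followed by Sobolev embedding. The paper simply asserts $W^{4,p}$ control from $g^{ij}\in W^{2,p}$; your first option of differentiating the equation once, with $\partial_k g^{ij}\,\partial_{ij}u\in L^p$ as the right-hand side, already gives $\partial u\in W^{2,1}_p$ and hence bounded $\nabla^2 u$, so no fallback is needed. One small fix: with $r=\sqrt{\tau}/2$ the cylinder $[\tau-4r^2,\tau]$ reaches $\sigma=0$, so take $r=\sqrt{\tau/8}$ to actually have $\sigma\in[\tau/2,\tau]$ in Step 3.
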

	\begin{proof}
		By  Gauss-Codazzi equation, we have
		\begin{equation}
			\begin{split}
				R_{ijkl}=-h_{ik}h_{jl}+h_{il}h_{jk},
			\end{split}
		\end{equation}
		where $\{h_{ij}\}$ is the second fundamental from in local coordinate. So we have $|Rm|\leq c(n)\kappa^2$. By Li-Yau's \cite{LY} theory, the heat kernel $G(x,t;y,s)$ of $\partial_t-\Delta$ satisfies
		\begin{equation}
			\begin{split}
				G(x,t;y,s)&\leq \frac{C}{|\sqrt{|B(x,\sqrt{t-s})|}\sqrt{|B(y,\sqrt{t-s})|}}e^{-\frac{d^2(x,y)}{4D(t-s)}}\\
				&\leq \frac{C}{(t-s)^{\frac{n}{2}}}e^{-\frac{d^2(x,y)}{4D(t-s)}}.\\
			\end{split}
		\end{equation}
		for $0<t-s<r_\kappa^2 $, and in the last inequality we use Lemma \ref{lm2.1}. Here $C,D$ depend only on $n,\kappa$.
		
		Since, $G(x,t;y,s)=G(x,t-s;y,0)$, we assume $s=0$.	Fix $x_0,y_0\in M,0<t<r_\kappa^2$. In $W^{2,p}$ harmonic coordinate, we have
		\begin{equation}
			\partial_\tau G(x,\tau;y_0,0)-g^{ij}(x)\partial_{ij}G(x,\tau;y_0,0)=0, \quad (x,\tau)\in \Omega(x_0,t)
		\end{equation}
		where
		\begin{equation}
			\Omega:=B_{\sqrt{t}}(x_0)\times [\frac{t}{2},t],\quad \Omega':=B_{\sqrt{t}/2}(x_0)\times [\frac{3t}{4},t].
		\end{equation}
		Noting that $g^{ij}\in W^{2,p}$, by Lemma \ref{Lp-estimate},
		\begin{equation}
			\|G\|_{W^{4,p}(\Omega')}\leq C\|G\|_{L^\infty(\Omega)},
		\end{equation} 
		Finally, by Sobolev embedding theorem (Lemma \ref{sob}), we get the last  two inequalities in \eqref{eq3.16}.
		For more details, see \cite{HW26}. 
	\end{proof}
	\begin{corollary}
		Suppose that $(y,s)\in \{M\times [0,t]\}\setminus \Omega(x,t)$, where $\Omega(x,t)=B_{\sqrt{t}}(x)\times [\frac{t}{2},t]$. We have 
		\begin{equation}
			\begin{split}
				|G(x,t;y,s)|+\sqrt{t}|\nabla_x G(x,t;y,s)|+t |\nabla^2_x G(x,t;y,s)|\leq Ct^{-\frac{n}{2}}\exp\bigg(-\frac{d^2(x,y)}{4Dt}\bigg).
			\end{split}
		\end{equation}
	\end{corollary}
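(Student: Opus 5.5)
The corollary follows directly from Proposition \ref{prop2.3} by a case analysis on where $(y,s)$ lies. Proposition \ref{prop2.3} bounds $G$, $\sqrt{t-s}\,|\nabla_x G|$ and $(t-s)|\nabla_x^2G|$ by $C(t-s)^{-n/2}\exp(-d^2(x,y)/(4D(t-s)))$. So it suffices to show that, for $(y,s)\notin \Omega(x,t)$, each quantity
\[
(t-s)^{-\frac{n+k}{2}}\exp\Big(-\frac{d^2(x,y)}{4D(t-s)}\Big),\qquad k=0,1,2,
\]
is bounded by $C\,t^{-\frac{n+k}{2}}\exp\big(-\frac{d^2(x,y)}{4D' t}\big)$. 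We may need to enlarge $D$ to some $D'$ depending on $n,D$, which is harmless since $D$ is a generic constant. Multiplying by $t^{k/2}$ then gives the stated combined bound.

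\emph{Case 1: $s\le t/2$.} Then $t/2\le t-s\le t$. Hence $(t-s)^{-\frac{n+k}{2}}\le 2^{\frac{n+k}{2}}t^{-\frac{n+k}{2}}$, and since $t-s\le t$,
\[
\exp\Big(-\frac{d^2}{4D(t-s)}\Big)\le \exp\Big(-\frac{d^2}{4Dt}\Big).
\]
This case is immediate.

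\emph{Case 2: $s\in(t/2,t]$ but $d(x,y)\ge\sqrt t$.} Here $\tau:=t-s\in(0,t/2)$ can be arbitrarily small, which is the main obstacle: the prefactor $\tau^{-\frac{n+k}{2}}$ blows up. The plan is to absorb it into part of the Gaussian. Split
\[
\exp\Big(-\frac{d^2}{4D\tau}\Big)=\exp\Big(-\frac{d^2}{8D\tau}\Big)\exp\Big(-\frac{d^2}{8D\tau}\Big).
\]
For the first factor, use the elementary bound $\sup_{\lambda>0}\lambda^{m}e^{-\lambda}=C_m$ with $\lambda=d^2/(8D\tau)$ and $m=\frac{n+k}{2}$. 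This gives
\[
\tau^{-\frac{n+k}{2}}\exp\Big(-\frac{d^2}{8D\tau}\Big)\le C\,d^{-(n+k)}\le C\,t^{-\frac{n+k}{2}},
\]
where the last step uses $d\ge\sqrt t$. For the second factor, $\tau\le t$ gives $\exp(-d^2/(8D\tau))\le \exp(-d^2/(8Dt))$, which is of the required form with $D'=2D$.

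Combining the two cases, and noting that $C$ and $D'$ depend only on $n$ and $\kappa$, gives the corollary. The only delicate point is the short-time regime near the diagonal time in Case 2. There the exclusion $d(x,y)\ge\sqrt t$, coming from $(y,s)\notin\Omega(x,t)$, is exactly what allows the trade of Gaussian decay for powers of $t$. Without that exclusion the bound would fail as $s\to t$.
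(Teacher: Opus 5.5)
Your proposal is correct and follows essentially the same argument as the paper. You split at $s=t/2$, and in the regime $s\ge t/2$ you use $d(x,y)\ge\sqrt{t}$ to let half of the Gaussian absorb the singular factor $(t-s)^{-(n+k)/2}$, at the cost of replacing $D$ by $2D$. The paper does the same, except that it first writes $d^2\ge\frac{1}{2}(t+d^2)$ and absorbs via $t/(8D(t-s))$, whereas you absorb via $\lambda=d^2/(8D(t-s))$; you also treat the derivative terms $k=1,2$, which the paper leaves implicit.
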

	\begin{proof}
		We only prove $G(x,t;y,s)\leq Ct^{-\frac{n}{2}}\exp\bigg(-\frac{d^2(x,y)}{4Dt}\bigg)$.	If $s\leq \frac{t}{2}$,
		\begin{equation}
			\begin{split}
				G(x,t;y,s)&\leq C(t-s)^{-\frac{n}{2}}e^{-\frac{d^2(x,y)}{-4D(t-s)}}\\
				&\leq C(t/2)^{-\frac{n}{2}}e^{-\frac{d^2(x,y)}{4D(t-s)}}.
			\end{split}
		\end{equation}
		If $\frac{t}{2}\leq s\leq t$, then $d^2(x,y)\geq t$. We have
		\begin{equation}
			\begin{split}
				G(x,t;y,s)&\leq C(t-s)^{-\frac{n}{2}}e^{-\frac{d^2(x,y)}{4D(t-s)}}\\
				&\leq C(t-s)^{-\frac{n}{2}}e^{-\frac{t+d^2(x,y)}{8D(t-s)}}\\
				&\leq C\bigg(\frac{t}{t-s}\bigg)^{-\frac{n}{2}}e^{-\frac{t}{8D(t-s)}}t^{-\frac{n}{2}}e^{-\frac{d^2(x,y)}{8D(t-s)}}\\
				&\leq Ct^{-\frac{n}{2}}e^{-\frac{d^2(x,y)}{8Dt}}.
			\end{split}
		\end{equation}
		In the last inequality, we use $0<t-s\leq t/2$.
	\end{proof}

	\begin{proposition}\label{prop2.2}
		Let $M^n \subset \mathbb{R}^{n+1}$ be a hypersurface with the second fundamental form satisfying $\sup_M |A| \leq \kappa$. Let $G(x,t;y,s)$ denote the heat kernel associated with the heat operator $\partial_t - \Delta_M-|A|^2$. For any $x, y \in M$ and $0 < s < t \leq T$, the following estimate holds:
		\begin{equation}\label{eq2.3}
			\begin{split}
				K(x,t;y,s)&\leq C(t-s)^{-\frac{n}{2}}e^{-\frac{d^2(x,y)}{4D(t-s)}}\\
				|\nabla K|(x,t;y,s)&\leq C(t-s)^{-\frac{n+1}{2}}e^{-\frac{d^2(x,y)}{4D(t-s)}}
			\end{split}
		\end{equation}
		for any $0<t-s\leq r_\kappa^2$ and $C,D$ depends only on $n,\kappa$.
	\end{proposition}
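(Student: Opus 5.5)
Since $0\le |A|^2\le\kappa^2$ is a bounded, time-independent potential, I will compare $K$, the kernel of $\partial_t-\Delta_M-|A|^2$, with the heat kernel $G$ of $\partial_t-\Delta_M$ from Proposition \ref{prop2.3}. The gradient bound will then follow by the same local parabolic regularity argument used there.

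\textbf{Step 1: pointwise bound.} For $K$ I plan a comparison (or Duhamel) argument. The function $\tilde K(x,t;y,s):=e^{-\kappa^2(t-s)}K(x,t;y,s)$ satisfies
\begin{equation*}
\partial_t\tilde K-\Delta\tilde K=(|A|^2-\kappa^2)\tilde K\le 0
\end{equation*}
whenever $K\ge0$. Positivity of $K$ follows from the maximum principle, since $|A|^2\ge0$ only adds a source. Both $\tilde K$ and $G$ converge to $\delta_y$ as $t\downarrow s$.

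The maximum principle on complete manifolds with bounded geometry (Lemma \ref{lm2.1}) then gives $0<K\le e^{\kappa^2(t-s)}G$. Since $t-s\le r_\kappa^2\le1$, Proposition \ref{prop2.3} yields the first line of \eqref{eq2.3}.

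Alternatively, I can use the Duhamel series $K=G+\int_s^t\!\int_M G\,|A|^2K$. Iterating it with the Gaussian semigroup property $\int_M t^{-n/2}e^{-d^2/4Dt}\,\tau^{-n/2}e^{-d^2/4D\tau}\le C(t+\tau)^{-n/2}e^{-d^2/4D(t+\tau)}$ gives a convergent series with the same bound, up to a possible slight increase of $D$.

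\textbf{Step 2: gradient bound.} I fix $x_0,y_0$ and set $\tau=t-s$, taking $s=0$ by time-invariance. On $\Omega=B_{\sqrt\tau}(x_0)\times[\tau/2,\tau]$ I work in the $W^{2,p}$ harmonic coordinates of Corollary \ref{coro3.5}; these exist because $\sqrt\tau\le r_\kappa$. In these coordinates
\begin{equation*}
\partial_t K-g^{ij}\partial_{ij}K-|A|^2K=0.
\end{equation*}
The coefficients satisfy the hypotheses of Lemma \ref{Lp-estimate} on scale $r=\sqrt\tau$, with $r^2\||A|^2\|_{L^p}$ bounded.

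Applying Lemma \ref{Lp-estimate} in scale-invariant form and then Sobolev embedding (Lemma \ref{sob}) gives
\begin{equation*}
\sqrt\tau\,|\nabla K|(x_0,\tau)\le C\sup_\Omega K.
\end{equation*}
Here only $W^{2,p}$ is needed, not $W^{4,p}$, because we want just first derivatives.

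\textbf{Step 3: transferring the Gaussian factor.} This is the main obstacle. Step 2 only gives $\sup_\Omega K$, and I need to convert it into a Gaussian in $d(x_0,y_0)$. For $x\in B_{\sqrt\tau}(x_0)$ and $\tau'\in[\tau/2,\tau]$, Step 1 gives $K\le C\tau^{-n/2}e^{-d^2(x,y_0)/4D\tau}$.

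If $d(x_0,y_0)\ge2\sqrt\tau$, then $d(x,y_0)\ge d(x_0,y_0)/2$. If instead $d(x_0,y_0)<2\sqrt\tau$, the exponential factor is comparable to $1$. In either case, after replacing $D$ by $4D$ (still depending only on $n,\kappa$),
\begin{equation*}
\sup_\Omega K\le C\tau^{-n/2}e^{-d^2(x_0,y_0)/4D\tau}.
\end{equation*}
Combining this with Step 2 gives the second line of \eqref{eq2.3}.
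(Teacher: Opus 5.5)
Your proposal is correct and follows the paper's proof. The paper first proves the comparison $K\le e^{\kappa^2(t-s)}G$ by the maximum principle, phrased there as $(\partial_t-\Delta)(G-e^{-\kappa^2 t}K)\ge 0$, and then gets the gradient bound from the Calder\'on--Zygmund estimate of Lemma \ref{Lp-estimate} plus Sobolev embedding in $W^{2,p}$ harmonic coordinates; your Step 3 (moving the Gaussian from $\sup_\Omega K$ to $d(x_0,y_0)$) and your sign $-|A|^2K$ in the coordinate equation fill in a detail the paper leaves implicit and correct a sign it misprints.
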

	\begin{proof}
		\begin{equation}
			\begin{split}
				(\partial_t-	\Delta)(G-e^{-\kappa^2t}K)=e^{-\kappa^2t}(\kappa^2-|A|^2)K\geq 0 .
			\end{split}
		\end{equation}
		By maximum principle, we know that 
		\begin{equation}
			\begin{split}
				K(x,t;y,s)\leq e^{\kappa^2(t-s)}G(x,t;y,s)\leq \frac{C}{(t-s)^{\frac{n}{2}}}e^{-\frac{d^2(x,y)}{4Dt}}.
			\end{split}
		\end{equation}
		In $W^{2,p}$ harmonic coordinate, we have
		\begin{equation}
			\partial_tK-g^{ij}(x)\partial_{ij}K+|A|^2K=0
		\end{equation}
		where $g^{ij}\in W^{2,p}$. So, by Calder\'on-Zygmund inequality and Sobolev embedding inequality, we have
		\begin{equation}
			\begin{split}
				|\nabla K|(x,t;y,s)\leq C(t-s)^{-\frac{n+1}{2}}e^{-\frac{d^2(x,y)}{4D(t-s)}}.
			\end{split}
		\end{equation}
	\end{proof}
	
	\begin{corollary}
		Suppose that $(y,s)\in \{M\times [0,t]\}\setminus \Omega(x,t)$, where $\Omega(x,t)=B_{\sqrt{t}}(x)\times [\frac{t}{2},t]$. We have 
		\begin{equation}
			\begin{split}
				|K(x,t;y,s)|+\sqrt{t}|\nabla_x K(x,t;y,s)|\leq Ct^{-\frac{n}{2}}\exp\bigg(-\frac{d^2(x,y)}{4Dt}\bigg).
			\end{split}
		\end{equation}
	\end{corollary}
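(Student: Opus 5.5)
The argument mirrors the proof of the analogous corollary for $G$, now using the two bounds of Proposition \ref{prop2.2}. Fix $(x,t)$ and $(y,s)\in \{M\times[0,t]\}\setminus\Omega(x,t)$. Writing $\tau=t-s\in(0,t]$, Proposition \ref{prop2.2} gives
\begin{equation*}
|K(x,t;y,s)|+\sqrt{t}\,|\nabla_xK(x,t;y,s)|\leq C\Big(1+\sqrt{t/\tau}\Big)\tau^{-\frac n2}e^{-\frac{d^2(x,y)}{4D\tau}} .
\end{equation*}
It therefore suffices to show that the right-hand side is bounded by $Ct^{-\frac n2}e^{-\frac{d^2(x,y)}{4D't}}$. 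Here $D'$ is a fixed multiple of $D$, for instance $2D$, which we rename $D$ as the paper does. We split into two cases according to the position of $s$.

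\emph{Case $s\leq t/2$.} Then $\tau\geq t/2$, so $\sqrt{t/\tau}\leq\sqrt2$ and $\tau^{-n/2}\leq (t/2)^{-n/2}$. Since $\tau\leq t$, we also have $e^{-d^2/(4D\tau)}\leq e^{-d^2/(4Dt)}$. Combining these gives the bound directly, with constant $C2^{n/2}(1+\sqrt2)$.

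\emph{Case $t/2< s\leq t$.} Since $(y,s)\notin\Omega(x,t)$, we must have $d(x,y)\geq\sqrt t$, so $d^2\geq \tfrac12(t+d^2)$. Hence
\begin{equation*}
\tau^{-\frac n2}\Big(1+\sqrt{t/\tau}\Big)e^{-\frac{d^2}{4D\tau}}\leq t^{-\frac n2}\Big[(t/\tau)^{\frac n2}\big(1+(t/\tau)^{\frac12}\big)e^{-\frac{t}{8D\tau}}\Big]e^{-\frac{d^2}{8D\tau}} .
\end{equation*}
The bracket is a function $\phi(\lambda)=\lambda^{n/2}(1+\lambda^{1/2})e^{-\lambda/(8D)}$ of $\lambda=t/\tau\geq 2$. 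This function is bounded by a constant depending only on $n$ and $D$, since exponential decay beats polynomial growth. Finally, because $\tau\leq t/2\leq t$, we have $e^{-d^2/(8D\tau)}\leq e^{-d^2/(8Dt)}$.

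The only real point requiring care is this second case. The extra factor $\sqrt{t/\tau}$ from the gradient, and the singular factor $\tau^{-n/2}$, are not individually controlled. They must be absorbed by sacrificing half of the Gaussian exponent, using $d^2\geq t$. This costs a factor $2$ in $D$, which is harmless since $D$ depends only on $n,\kappa$. Everything else is immediate from Proposition \ref{prop2.2}, applied for $0<t-s\leq t\leq T<r_\kappa^2$.
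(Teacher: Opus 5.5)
Your proof is correct and follows the same route as the paper. The paper proves the analogous corollary for $G$ by exactly this split at $s=t/2$, using $d^2(x,y)\geq t$ in the second case to trade half the Gaussian exponent against the singular factor (at the cost of replacing $D$ by $2D$), and then states the $K$ version as analogous; you additionally track the gradient factor $\sqrt{t/\tau}$ explicitly through the bounded function $\phi(\lambda)$, a step the paper leaves to the reader.
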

	\subsection{Heat kernel over evolving metric}$\\$
	
	In  Section \ref{sec4}, we need estimates of the heat  kernel for an evolving metric. Let $\{M_t\}_{t\in [0,T]}$ be a mean curvature flow such that $\sup\limits_{x\in M_0}|A|(x)\leq \kappa$. By Chen-Yin's pseudolocality (cf.\cite[Corollary 1.5]{CY}), we have 
	\begin{equation}\label{eq2.14}
		\sup\limits_{(x,t)\in M\times [0,T]}|A|_{g_t}(x,t)\leq C,
	\end{equation}
	here $T=T(n,\kappa)$. Moreover, by Ecker-Husiken's interior estimates, we have 
	\begin{proposition}\cite{EH}\label{propeh}
		Let $\{M_t\}_{t\in [0,T]}$ be a mean curvature flow such that $\sup\limits_{x\in M_0}|A|(x)\leq \kappa$. Then there exist $c_m>0$ such that
		\begin{equation}
			\sup\limits_{x\in M_t}|\nabla^m A|^2\leq C_m\kappa^2(1+t^{-1})^m.
		\end{equation}
	\end{proposition}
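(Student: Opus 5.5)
This is the Ecker--Huisken interior estimate for all derivatives of $A$, so I would run the standard Bernstein--Shi argument using the evolution equations of $\nabla^m A$ under mean curvature flow. Two inputs are needed. The first is a uniform bound $|A|\le C\kappa$ on $[0,T]$ with $T=T(n,\kappa)$. This comes from \eqref{eq2.14}, or directly from the evolution $\partial_t|A|^2\le \Delta|A|^2+2|A|^4$ and the maximum principle, which give $|A|^2\le 2\kappa^2$ for $t\le (4\kappa^2)^{-1}$. The second input is the evolution equations
\begin{equation}
\partial_t \nabla^m A=\Delta\nabla^m A+\sum_{i+j+k=m}\nabla^iA*\nabla^jA*\nabla^kA .
\end{equation}
From these one gets
\begin{equation}
\partial_t|\nabla^mA|^2\le \Delta|\nabla^mA|^2-2|\nabla^{m+1}A|^2+C\sum_{i+j+k=m}|\nabla^iA||\nabla^jA||\nabla^kA||\nabla^mA| .
\end{equation}
Since $M$ is complete and noncompact, I would apply the maximum principle in the form valid for bounded functions on complete manifolds with bounded curvature. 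Gauss--Codazzi together with $|A|\le C$ gives bounded curvature, and Lemma \ref{lm2.1} gives the volume growth.

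The proof goes by induction on $m$, working on $t\le T\le 1$, where $t^{-m}$ and $(1+t^{-1})^m$ are comparable. For $m=1$, set
\begin{equation}
F=t|\nabla A|^2+\beta|A|^2 .
\end{equation}
The good term $-2\beta|\nabla A|^2$ from $\partial_t|A|^2$ absorbs both $|\nabla A|^2$ (coming from $\partial_t t$) and $Ct|A|^2|\nabla A|^2\le Ct\kappa^2|\nabla A|^2$. This works once $\beta$ is a large constant and $T\kappa^2$ is small, and it gives $\partial_t F\le\Delta F+C\kappa^4$. Hence $F\le \beta\kappa^2+C\kappa^4 t\le C\kappa^2$, i.e. $|\nabla A|^2\le C\kappa^2 t^{-1}$.

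For general $m$, assume $|\nabla^jA|^2\le C_j\kappa^2 t^{-j}$ for $j<m$ and set
\begin{equation}
F=t^m|\nabla^mA|^2+\beta_m t^{m-1}|\nabla^{m-1}A|^2 .
\end{equation}
The term $-2\beta_m t^{m-1}|\nabla^mA|^2$ absorbs $mt^{m-1}|\nabla^mA|^2$. The cubic reaction terms are estimated with the inductive hypothesis and Young's inequality: the ones containing $\nabla^mA$ are bounded by $C\kappa^2 t^m|\nabla^mA|^2$, hence absorbed for $T$ small, and the remaining ones are bounded by $C\kappa^4 t^{-1}\cdot t^{m}\cdot t^{-(m-1)}\cdot\ldots$, which integrates to a bound $C\kappa^2$ in time. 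Then $F\le C_m\kappa^2$ yields $|\nabla^mA|^2\le C_m\kappa^2 t^{-m}\le C_m\kappa^2(1+t^{-1})^m$.

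The main obstacle is the bookkeeping of powers of $t$ in the reaction terms, so that after multiplying by $t^m$ every term is either absorbed or integrable on $[0,T]$. A second issue is justifying the maximum principle on a noncompact $M$. For that I would either check that $F$ is a priori bounded on each $[\epsilon,T]$ (smooth flow with bounded geometry) and use the Ecker--Huisken weak maximum principle, or localize with a cutoff function $\varphi$ in $\mathbb{R}^{n+1}$ exactly as in \cite{EH}, where $(\partial_t-\Delta)|x|^2=-2n$.
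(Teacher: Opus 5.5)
\textbf{Gap in the inductive step.} The case $m=1$ works, but the step for $m\ge 2$ does not close with the quantity you chose. When you differentiate the weight of the lower-order term in $F=t^m|\nabla^mA|^2+\beta_m t^{m-1}|\nabla^{m-1}A|^2$, you get $(m-1)\beta_m t^{m-2}|\nabla^{m-1}A|^2$, and your accounting omits this term. Nothing in $F$ supplies a good term of the form $-t^{m-2}|\nabla^{m-1}A|^2$ to absorb it. The inductive hypothesis only gives $(m-1)\beta_m t^{m-2}|\nabla^{m-1}A|^2\le C\kappa^2 t^{-1}$, which is not integrable at $t=0$, so the maximum principle yields no bound. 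The argument already breaks at $m=2$, where the term is $\beta_2|\nabla A|^2\le C\kappa^2/t$.

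The standard fix is the full telescoping quantity $F=\sum_{k=0}^m c_k t^k|\nabla^kA|^2$ with $c_m=1$ and $c_{k-1}\ge k c_k$, enlarged as needed for the reaction terms. Each weight-derivative term $k c_k t^{k-1}|\nabla^kA|^2$ is then absorbed by the good term $-2c_{k-1}t^{k-1}|\nabla^kA|^2$ from the level below. The bottom term $c_0|A|^2$ has constant weight. The reaction terms are bounded by $C\kappa^2F+C\kappa^4$ just as you describe, so $F\le C\kappa^2$ on $[0,T]$ when $\kappa^2T\lesssim 1$.

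Alternatively, keep two terms but work on $[t_0/2,t_0]$ with weights $(t-t_0/2)^m$ and $(t-t_0/2)^{m-1}$. There the inductive hypothesis gives $|\nabla^{m-1}A|^2\le C\kappa^2t_0^{1-m}$. Since $\int_{t_0/2}^{t_0}(t-t_0/2)^{m-2}\,dt\sim t_0^{m-1}$, the offending term integrates to $C\kappa^2$.

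\textbf{Maximum principle on noncompact $M$.} Knowing only that $F$ is bounded on each $M\times[\epsilon,T]$ is not enough. $\nabla A$ need not be bounded on $M_0$, and applying the maximum principle from time $\epsilon$ requires $\sup_M \epsilon|\nabla A|^2(\cdot,\epsilon)\le C\kappa^2$ uniformly in $\epsilon$, which is the estimate itself. Instead, shift the weights so they vanish at the initial time: replace $t$ by $t-\epsilon$, which the $[t_0/2,t_0]$ version does automatically. Use the qualitative boundedness of all $\nabla^kA$ on $M\times[\epsilon,T]$, which follows for instance from local parabolic regularity of the graph equation when $|A|\le C$. Then let $\epsilon\to0$.

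\textbf{Comparison with the paper.} The paper gives no argument and simply cites \cite{EH}. Those interior estimates are localized by ambient cutoff functions, so they avoid both the noncompactness issue and any reliance on the global bound \eqref{eq2.14}.
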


	Let $L_{x,t}$ be an operator on $\{M_t\}_{t\in [0,T]}\subset \IR^{n+1}$,  defined as 
	\begin{equation}
		L_{x,t}u:=\frac{\partial }{\partial t}u-\Delta_{x,t}u-|A|^2(x,t)u
	\end{equation}
	where $A$ is the second fundamental form of $M_t$ for $t\in [0,T]$. Define $g(t)=g_t$ to be the pull-back metric from $M_t$, we have
	\begin{equation}
		\frac{\partial }{\partial t}g(t)=-2HA
	\end{equation}
	where $H$ is the mean curvature of $M_t\subset \IR^{n+1}$ and $A$ is the second fundamental form. For the existence and uniqueness of the heat kernel on evolving metrics, we recommend  the book \cite{Cho} for details.
	
	\begin{theorem}\cite[Theorem 24.40]{Cho}\label{cho}
		Let $M^n$ be a complete manifold and let $g(t),t\in [0,T]$, be a smooth family Riemannian metrics on $M$. If $Q$ is uniformly bounded, then there exists a unique $C^\infty$ minimal  positive fundamental solution $G_Q(x,\tau;y,v)$ for the heat-type  operator  $L_{x,\tau}=\frac{\partial}{\partial \tau}-\Delta_{x,t}+Q$. 
	\end{theorem}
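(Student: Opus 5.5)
The plan is the classical construction of the fundamental solution (Levi parametrix method) for the operator $L_{x,\tau}=\partial_\tau-\Delta_{g(\tau)}+Q$ on the complete manifold $M$ with the evolving metrics $g(t)$, combined with an exhaustion argument to pass from compact domains to $M$. First reduce to $Q\geq 0$: if $|Q|\leq q_0$, set $G_Q=e^{-(q_0)(\tau-v)}G_{Q+q_0}$ formally, since $L$ with $Q$ replaced by $Q+q_0\geq0$ differs by the factor $e^{q_0\tau}$. So it suffices to treat $Q\ge 0$ bounded. Next exhaust $M$ by smooth relatively compact domains $\Omega_1\subset\Omega_2\subset\cdots$. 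On each $\overline{\Omega_k}\times[0,T]$ the metrics $g(t)$ are uniformly equivalent with all derivatives bounded, so the standard parametrix construction applies. One takes the Euclidean-type kernel $Z(x,\tau;y,v)=(4\pi(\tau-v))^{-n/2}\exp(-d_{g(v)}^2(x,y)/4(\tau-v))$ cut off near the diagonal and iterates the Volterra series $\sum_j (LZ)^{*j}$. This produces the Dirichlet heat kernel $G_k$ of $L$ on $\Omega_k$, which is smooth, positive in the interior, and satisfies $G_k(\cdot,\tau;y,v)\to\delta_y$ as $\tau\downarrow v$.

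The second step is monotonicity and a uniform bound. By the parabolic maximum principle on $\Omega_{k+1}$, the difference $G_{k+1}-G_k$ solves $L(\cdot)=0$ with nonnegative boundary data on $\partial\Omega_k$ and zero initial data, so $G_k\le G_{k+1}$. Since $Q\ge0$, we have $\int_{\Omega_k}G_k(x,\tau;y,v)\,d\mu_{g(v)}(y)\le C(T)$, with the volume form changing by a bounded factor; this uses uniform boundedness of $\partial_t g$, which in our application is $-2HA$, bounded by \eqref{eq2.14}. For a uniform pointwise bound on compact sets, apply local parabolic regularity (the Calder\'on--Zygmund estimate of Lemma \ref{Lp-estimate} in local coordinates, then Lemma \ref{sob}) to $G_k$, which gives $C^{2,1}$ bounds on compact subsets away from the diagonal. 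Near the diagonal, compare with the parametrix on a fixed $\Omega_{k_0}$. Then $G_Q:=\lim_k G_k$ exists, is positive, and by interior Schauder estimates the convergence is in $C^\infty_{loc}$ off the diagonal. Hence $G_Q$ is a $C^\infty$ fundamental solution, and the $\delta$-initial condition is inherited from the comparison $G_{k_0}\le G_Q$ and the $L^1$ bound.

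The third step is minimality and uniqueness. If $\tilde G>0$ is any positive fundamental solution on $M$, then on each $\Omega_k$ the maximum principle applied to $\tilde G-G_k$ (initial data zero in the distributional sense, boundary values $\tilde G\ge 0=G_k$) gives $G_k\le\tilde G$, hence $G_Q\le \tilde G$. So $G_Q$ is the minimal positive one. Uniqueness of the minimal one follows since two minimal solutions dominate each other.

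The main obstacle I expect is the initial-value step: justifying the maximum principle for $\tilde G-G_k$ at $\tau=v$, where both are singular. I would handle it by pairing with test data: for $\varphi\in C_c^\infty(\Omega_k)$, $\varphi\ge0$, compare $u(x,\tau)=\int \tilde G\varphi$ with $u_k=\int G_k\varphi$, which are continuous up to $\tau=v$ with equal initial value $\varphi$. This is also where the time-dependence of the volume form and the adjoint equation in $(y,v)$ must be handled carefully; that requires bounded $\partial_t g$ and its derivatives locally.
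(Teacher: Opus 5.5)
The paper gives no argument for this statement; it is simply quoted from \cite[Theorem 24.40]{Cho}, so your proof has to stand on its own. The overall route (Dirichlet kernels $G_k$ on an exhaustion, monotone limit, minimality by comparison) is the standard one, but the step that makes $\lim_k G_k$ finite does not work as written.

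\textbf{The gap.} Monotonicity is useless without an upper bound on $G_k$, uniform in $k$, on compact subsets of $\{\tau>v\}$. Neither tool you invoke gives one. Lemma \ref{Lp-estimate} bounds $\|u\|_{W^{2,p}(\Omega')}$ by $\|u\|_{L^\infty(\Omega)}$, so it presupposes exactly the sup bound you want, and the argument is circular. Comparison with $G_{k_0}$ only bounds $G_k$ from \emph{below}, since $G_{k_0}\le G_k$. What is missing is an $L^1\to L^\infty$ estimate (Moser's local mean-value inequality, or a parabolic Harnack inequality), applied in the variables where you actually have the $L^1$ bound.

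For $Q\ge0$, apply the maximum principle in $(x,\tau)$ to $\int_{\Omega_k}G_k(x,\tau;y,v)\,d\mu_{g(v)}(y)$, which has initial value $1$ and zero boundary values. This gives mass $\le 1$ with no hypothesis on $\partial_t g$. Moreover, $(y,v)\mapsto G_k(x,\tau;y,v)$ solves the adjoint equation $-\partial_v w-\Delta_{g(v)}w+(Q-\tfrac12\mathrm{tr}_{g(v)}\partial_v g)\,w=0$. The mean-value inequality on $B(y,r)\times[v,v+r^2]$, with $r^2\le\min\{r_0^2,(\tau-v)/2\}$, then gives $G_k(x,\tau;y,v)\le Cr^{-n}$. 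Here $C$ depends only on $g(\cdot)$ and $Q$ on a fixed compact cylinder around $(y,v)$, hence neither on $k$ nor on $x$. Only after this does your Calder\'on--Zygmund/Schauder bootstrap yield $C^\infty_{loc}$ convergence.

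\textbf{Related points.} The other mass, $\int G_k(x,\tau;y,v)\,d\mu_{g(\tau)}(x)$, is what controls $G_Q(\cdot,\tau;y,v)\to\delta_y$ in the $x$-variable. It is governed by the adjoint potential, i.e.\ by $\sup(\tfrac12\mathrm{tr}_g\partial_t g-Q)$. This is where $\partial_t g$ genuinely enters, and a global bound on it is not among the hypotheses of the statement; you import it from \eqref{eq2.14}. In the MCF application it is harmless, because $\tfrac12\mathrm{tr}_g(-2HA)=-H^2\le0$. For the $\delta$-limit the mass must also be $\le1+o(1)$ as $\tau\downarrow v$, not just $\le C(T)$. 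Otherwise $G_{k_0}\le G_Q$ does not force the mass away from the pole to vanish.

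\textbf{Smaller slips.}
\begin{itemize}
\item The reduction should read $G_Q=e^{q_0(\tau-v)}G_{Q+q_0}$, since the added potential damps the kernel.
\item The Levi series with an interior cutoff does not by itself produce the Dirichlet kernel. You still have to subtract the solution with zero initial data and lateral data equal to the parametrix on $\partial\Omega_k$.
\item In the minimality step, for an arbitrary $\tilde G$ you only know $\tilde G(\cdot,\tau;y,v)\to\delta_y$. So instead of integrating against $\varphi(y)$, pair $\tilde G-G_k$ in the $x$-variable against $\phi(z,s)=\int G_k(x,\tau_1;z,s)\psi(x)\,d\mu_{g(\tau_1)}(x)$ with $\psi\ge0$. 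Then $s\mapsto\int_{\Omega_k}(\tilde G-G_k)(z,s;y,v)\,\phi(z,s)\,d\mu_{g(s)}(z)$ is nondecreasing, because the only boundary term is $-\int_{\partial\Omega_k}\tilde G\,\partial_\nu\phi\ge0$. It tends to $0$ as $s\downarrow v$, which gives $G_k\le\tilde G$.
\end{itemize}
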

	Letting $Q\equiv 0$ in Theorem \ref{cho}, we have
	\begin{proposition}\label{prop3.8}
		Let $\tilde{G}$ be the heat kernel of operator $\partial_t-\Delta_{x,t}$.	There are constants $C,D>0$ depending on $m$ such that
		\begin{equation}
			\begin{split}
				\tilde{G}(x,t;y,s)&\leq \frac{C}{(t-s)^{\frac{n}{2}}}\exp\bigg(-\frac{d_{g(t)}^2(x,y)}{4D(t-s)}\bigg),\\
				\sum_{2i+j\leq m}|\partial^i_t\nabla^{j}_x\tilde{G}(x,t;y,s)|&\leq\frac{C}{(t-s)^{\frac{n+m}{2}}}\exp\bigg(-\frac{d_{g(t)}^2(x,y)}{4D(t-s)}\bigg),
			\end{split}
		\end{equation}
		for any $0<s<t<1$.
	\end{proposition}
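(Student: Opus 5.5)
Here $\tilde G$ is the minimal positive fundamental solution of $\partial_t-\Delta_{g(t)}$ along the flow. I would prove Proposition \ref{prop3.8} in two stages, as in Propositions \ref{prop2.3} and \ref{prop2.2}: first a Gaussian upper bound for $\tilde G$, then higher derivative bounds from local parabolic regularity in the time-dependent coordinates of Definition \ref{def3.8}.

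\textbf{Step 1 (Gaussian bound).} By \eqref{eq2.14} we have $|A|\le C$ on $M\times[0,T]$. Hence $|\partial_t g|=2|H||A|\le C$, and the metrics $g(s)$, $s\in[0,t]$, are uniformly equivalent: $e^{-C(t-s)}g(t)\le g(s)\le e^{C(t-s)}g(t)$. In particular $d_{g(s)}$ and $d_{g(t)}$ are comparable. By Gauss--Codazzi, $|\mathrm{Rm}|\le c(n)C^2$ for all times, and by Lemma \ref{lm2.1} volumes of small balls are Euclidean, $\mathrm{Vol}_{g(s)}(B_r)\sim r^n$ for $r<i_0$, uniformly in $s$. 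With uniformly equivalent metrics, bounded curvature and bounded $|\partial_t g|$, I would get the on-diagonal bound $\tilde G\le C(t-s)^{-n/2}$ in one of two ways. The first is a Nash/Moser iteration using the uniform local Sobolev inequality; the measure $d\mu_t$ changes by the factor $e^{-\int H^2}$, which is harmless. The second is to cite the Li--Yau type bounds for evolving metrics in \cite{Cho}. For the off-diagonal Gaussian factor I would use Davies' perturbation method, replacing $u$ by $e^{\lambda\psi}u$ with $\psi$ a $g(t)$-Lipschitz function. The commutator terms from $\partial_t g$ contribute only $e^{C(t-s)}$, which is bounded for $t-s<1$. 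This yields $\exp(-d^2_{g(t)}/4D(t-s))$.

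\textbf{Step 2 (derivatives).} Fix $(x_0,t)$, set $\tau=t-s$ and $r=c\sqrt{\tau}$, and work on $\Omega=B_r(x_0)\times[t-r^2,t]$. If $s\ge t/2$, use the coordinates from the corollary preceding Definition \ref{def3.8}, whose radius is at least $C_m\sqrt{t}\gtrsim\sqrt\tau$. If $s<t/2$, apply the same construction after shifting time so that the flow starts at $s$: Proposition \ref{propeh} applied to the flow from time $s$ gives $|\nabla^k A|\le C\tau^{-k/2}$ on $[s+\tau/2,t]$, so the radius is still $\gtrsim\sqrt\tau$. In these coordinates $\tilde G$ solves $\partial_t\tilde G=g^{ij}(x,t)\partial_{ij}\tilde G-g^{ij}\Gamma^k_{ij}\partial_k\tilde G$, with coefficients bounded in scale-invariant $W^{m+2,p}$. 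Rescaling $\Omega$ to unit size, I would apply Lemma \ref{Lp-estimate} iteratively: differentiate the equation in $x$ and $t$, and apply the lemma to the derivatives on nested cylinders. This gives $\|\tilde G\|_{W^{m+2,p}(\Omega')}\le C\|\tilde G\|_{L^\infty(\Omega)}$. Lemma \ref{sob} then converts this into pointwise bounds on $\partial_t^i\nabla^j\tilde G$ for $2i+j\le m$, with each derivative costing a factor $r^{-1}\sim\tau^{-1/2}$.

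\textbf{Step 3 (assembling the bound).} On $\Omega$, Step 1 gives $\sup\tilde G\le C\tau^{-n/2}\exp(-d^2(x_0,y)/4D'\tau)$, because $d(x,y)\ge d(x_0,y)-r$ and the loss from $-r$ is absorbed by enlarging $D$. Converting covariant derivatives to coordinate derivatives is controlled by the $W^{m,p}$ bounds on $g$.

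The main obstacle is Step 2. Parabolic $W^{2,p}$ theory needs coefficients that are regular in time as well, and in the non-harmonic-in-time coordinates $\partial_t g$ brings in $\nabla^k(HA)$. Each such term costs $\tau^{-(k+1)/2}$, and these factors must stay balanced under parabolic scaling. I would handle this by tracking scale-invariant norms on the rescaled unit cylinder, so that every coefficient norm is $O(1)$, exactly as the corollary after Definition \ref{def3.8} asserts.
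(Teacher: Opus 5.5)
Your proposal is correct and takes the same route as the paper. The paper's proof is only the one-line instruction to write the heat equation for $\tilde G$ in the local coordinates from the corollary after Definition \ref{def3.8} and apply the Calder\'on--Zygmund inequality and Sobolev embedding, with the Gaussian upper bound taken for granted from Theorem \ref{cho} and the literature; your Step 1 (Nash--Moser plus Davies) and your handling of the first-order term and of the time-regularity of the coefficients make explicit what the paper leaves implicit. One small point: the case split in Step 2 is unnecessary, because $\tau=t-s\le t$ always, so the radius $C_m\sqrt{t}$ already dominates $\sqrt{\tau}$ and no time shift is needed.
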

	\begin{proof}
		We can write the heat kernel equation in the local coordinate and using Calder\'on-Zygmund inequality and Sobolev embedding theorem.
	\end{proof}
	Letting $Q= |A|^2$ in Theorem \ref{cho}, we have
	\begin{proposition}\label{prop2.4}
		
		Let $\tilde{K}(x,t;y,s)$ be the heat kernel for the operator $L_{x,t}$. 	There are constants $C,D>0$ such that
		\begin{equation}
			\begin{split}
				\tilde{K}(x,t;y,s)&\leq \frac{C}{(t-s)^{\frac{n}{2}}}\exp\bigg(-\frac{d_{g(t)}^2(x,y)}{4D(t-s)}\bigg),\\
				\sum_{2i+j\leq k}|\partial^i_t\nabla^{j}_x\tilde{K}(x,t;y,s)|&\leq\frac{C}{(t-s)^{\frac{n+k}{2}}}\exp\bigg(-\frac{d_{g(t)}^2(x,y)}{4D(t-s)}\bigg),
			\end{split}
		\end{equation}
		for any $j\in \IN^+$ and $0<s<t<1$.
	\end{proposition}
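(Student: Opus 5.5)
We reduce Proposition \ref{prop2.4} to Proposition \ref{prop3.8} and then upgrade to derivatives by local parabolic regularity, following the proof of Proposition \ref{prop2.2}.

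\textbf{Step 1: the pointwise bound.} By \eqref{eq2.14}, $|A|^2\le C_0$ on $M\times[0,T]$, where $C_0=C_0(n,\kappa)$. Set $\tilde G$ for the kernel of $\partial_t-\Delta_{x,t}$ and $\tilde K$ for the kernel of $L_{x,t}$. Then
\begin{equation*}
(\partial_t-\Delta_{x,t})\bigl(e^{C_0(t-s)}\tilde G-\tilde K\bigr)=C_0e^{C_0(t-s)}\tilde G+|A|^2\tilde K\ge 0 .
\end{equation*}
Both functions have the same delta initial data at time $s$. By the maximum principle for minimal positive fundamental solutions (Theorem \ref{cho}), we get $0<\tilde K\le e^{C_0(t-s)}\tilde G$. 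Equivalently, $\tilde K$ is given by the Duhamel/Neumann series in $\tilde G$, and each term is dominated by the corresponding term for the constant potential $-C_0$. Proposition \ref{prop3.8} then gives the first inequality, since $e^{C_0}\le C$ for $t-s<1$.

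\textbf{Step 2: derivatives.} Fix $(x_0,t)$ and put $r=\sqrt{t-s}$ (with $r\le r_\kappa$ after shrinking $T$). On the backward cylinder $\Omega=B_{r}(x_0)\times[t-r^2/2,t]$ we use the coordinates from the corollary following Definition \ref{def3.8}. There $g_{ij}(\tau)$ is $\Lambda$-equivalent to $\delta_{ij}$ and has scale-invariant $W^{m+2,p}$ bounds. The $|\nabla^m A|\le C\kappa(t-s)^{-m/2}$ estimates from Proposition \ref{propeh} give matching bounds on the coefficients $|A|^2$ in these coordinates. Rescale $\Omega$ to unit size. In the rescaled variables, $\tilde K$ solves $\partial_\tau u-g^{ij}\partial_{ij}u+b^i\partial_iu+|A|^2u=0$, with $b^i=g^{jk}\Gamma^i_{jk}$. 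The coefficients have uniformly bounded $W^{k,p}$ norms, and the potential has size $r^2|A|^2\le C$.

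We then apply Lemma \ref{Lp-estimate} iteratively. At each stage we differentiate the equation in space and time, treat the commutator terms as the right-hand side $f$, and shrink the cylinder. This yields
\begin{equation*}
\|\tilde K\|_{W^{k+2,p}_{\mathrm{par}}(\Omega')}\le C\|\tilde K\|_{L^\infty(\Omega)}.
\end{equation*}
Lemma \ref{sob} converts this into $C^{k}$ bounds on $\Omega'$. Scaling back gives
\begin{equation*}
\sum_{2i+j\le k}|\partial_t^i\nabla^j\tilde K|(x_0,t)\le C r^{-k}\sup_\Omega \tilde K .
\end{equation*}

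\textbf{Step 3: closing with the Gaussian.} By Step 1, $\sup_\Omega\tilde K\le C(t-s)^{-n/2}\sup_{(x,\tau)\in\Omega}\exp\bigl(-d^2_{g(\tau)}(x,y)/4D(t-s)\bigr)$. Two comparisons reduce this to the base point $(x_0,t)$:
\begin{itemize}
\item Since $\partial_t g=-2HA$ with $|HA|\le C$, we have $d_{g(\tau)}\ge e^{-C}d_{g(t)}$.
\item On $B_r(x_0)$, $d(x,y)\ge d(x_0,y)-r$, and $(a-r)^2\ge a^2/2-r^2$ with $r^2=t-s$.
\end{itemize}
Hence the exponent at $(x,\tau)$ is at most $-d_{g(t)}^2(x_0,y)/4D'(t-s)$ plus a constant, after enlarging $D$ to $D'$ and $C$. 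Combining with Step 2 gives the stated bound with $(t-s)^{-(n+k)/2}$.

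\textbf{Main obstacle.} The hardest part is Step 2. One must justify the higher-order Calderón–Zygmund iteration with time-dependent coefficients whose regularity is only controlled at scale $\sqrt{\tau}$, not $\sqrt{t-s}$. When $s$ is close to $0$ these scales agree up to constants, since $\tau\ge t-s$ implies $\sqrt\tau\gtrsim r$. In general $\sqrt{\tau}\ge r$, so the coefficient bounds at scale $r$ are only better. I would verify this carefully, then run the iteration in the standard way, using the commutator structure $\partial_t\Gamma=\nabla(A*A)$ to bound the differentiated coefficients.
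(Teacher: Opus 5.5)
Your proposal is correct and is essentially the argument the paper intends. The paper only invokes Theorem \ref{cho} with $Q=|A|^2$ and otherwise follows the template of Propositions \ref{prop2.2} and \ref{prop3.8}: comparison with the kernel without potential for the Gaussian bound, then Calder\'on--Zygmund estimates and Sobolev embedding in local coordinates for the derivatives, which is exactly your Steps 1--3.

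There is one sign slip in Step 1. For $L_{x,t}=\partial_t-\Delta_{x,t}-|A|^2$ one has $(\partial_t-\Delta_{x,t})\tilde K=|A|^2\tilde K$, so your displayed right-hand side should be $C_0e^{C_0(t-s)}\tilde G-|A|^2\tilde K$, whose sign is not evident. Use instead $(\partial_t-\Delta_{x,t})\bigl(\tilde G-e^{-C_0(t-s)}\tilde K\bigr)=e^{-C_0(t-s)}(C_0-|A|^2)\tilde K\ge 0$, as in Proposition \ref{prop2.2}, or your Neumann-series remark; either gives the same bound $\tilde K\le e^{C_0(t-s)}\tilde G$.
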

	
	\begin{corollary}
		Suppose that $(y,s)\in \{M\times [0,t]\}\setminus \Omega(x,t)$, we have 
		\begin{equation*}
			\begin{split}
				|\tilde{G}(x,t;y,s)|+\sqrt{t}|\nabla_x \tilde{G}(x,t;y,s)|+t |\nabla^2_x \tilde{G}(x,t;y,s)|\leq Ct^{-\frac{n}{2}}\exp\bigg(-\frac{d_{g(t)}^2(x,y)}{4Dt}\bigg).
			\end{split}
		\end{equation*}
		and
		\begin{equation*}
			\begin{split}
				|\tilde{K}(x,t;y,s)|+\sqrt{t}|\nabla_x \tilde{K}(x,t;y,s)|+t |\nabla^2_x \tilde{K}(x,t;y,s)|\leq Ct^{-\frac{n}{2}}\exp\bigg(-\frac{d_{g(t)}^2(x,y)}{4Dt}\bigg).
			\end{split}
		\end{equation*}
	\end{corollary}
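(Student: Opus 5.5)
The corollary is the evolving-metric analogue of the two earlier corollaries, and I would copy their proof almost verbatim. The inputs are Proposition \ref{prop3.8} for $\tilde G$ and Proposition \ref{prop2.4} for $\tilde K$, taken with $m=k=2$. For each kernel these give
\begin{equation*}
|\tilde G|+\sqrt{t-s}\,|\nabla_x\tilde G|+(t-s)|\nabla_x^2\tilde G|\leq C(t-s)^{-\frac n2}\exp\Big(-\frac{d_{g(t)}^2(x,y)}{4D(t-s)}\Big).
\end{equation*}
Since $t-s\leq t$, the weights $\sqrt{t}$ and $t$ in the statement dominate $\sqrt{t-s}$ and $t-s$. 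The target bound is larger than this estimate, so the job reduces to replacing $(t-s)^{-n/2}e^{-d^2/(4D(t-s))}$ by $Ct^{-n/2}e^{-d^2/(4D't)}$ for a possibly larger $D'$. I allow $D$ to change from line to line, as the earlier corollary already does when it passes from $4D$ to $8D$.

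The argument splits on the complement of $\Omega(x,t)=B_{\sqrt t}(x)\times[\frac t2,t]$. Here the ball is taken in the metric $g(t)$, which is the metric appearing in the exponent.

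\emph{Case 1: $s\leq t/2$.} Then $t-s\geq t/2$, so $(t-s)^{-n/2}\leq 2^{n/2}t^{-n/2}$. Also $t-s\leq t$ gives $e^{-d^2/(4D(t-s))}\leq e^{-d^2/(4Dt)}$.

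\emph{Case 2: $t/2\leq s<t$.} Then necessarily $d_{g(t)}(x,y)\geq\sqrt t$. I split the exponent in half and use $d^2\geq t$ in one half. This bounds the kernel by
\begin{equation*}
C\Big(\frac{t}{t-s}\Big)^{\frac n2}e^{-\frac{t}{8D(t-s)}}\;t^{-\frac n2}e^{-\frac{d^2}{8D(t-s)}}.
\end{equation*}
The function $\lambda\mapsto\lambda^{n/2}e^{-\lambda/(8D)}$ is bounded on $[2,\infty)$, and $e^{-d^2/(8D(t-s))}\leq e^{-d^2/(8Dt)}$. Renaming $2D$ as $D$ gives the claimed form.

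The only place needing care is the derivative terms. After multiplying by $\sqrt t$ or $t$, they carry the extra factors $(t/(t-s))^{1/2}$ and $t/(t-s)$. In Case 1 these are at most $2$. In Case 2 they are absorbed into the same supremum, now of $\lambda^{(n+2)/2}e^{-\lambda/(8D)}$, which is still finite.

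I expect no real obstacle. The one point to check is that Propositions \ref{prop3.8} and \ref{prop2.4} hold uniformly for $0<s<t\leq T<1$ with constants depending only on $n$ and $\kappa$. That uniformity is already part of their statements, relying on the curvature bound \eqref{eq2.14}. Because all distances are measured in the single metric $g(t)$, no comparison between $d_{g(s)}$ and $d_{g(t)}$ is needed.
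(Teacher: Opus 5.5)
Your proposal is correct and follows the paper's argument, namely its proof of the static-metric corollary, which the paper implicitly reuses here. You split into $s\le t/2$, where $(t-s)^{-n/2}\le 2^{n/2}t^{-n/2}$, and $t/2\le s<t$, where $d_{g(t)}^2\ge t$ lets you halve the exponent and absorb $(t/(t-s))^{n/2}$; you also handle the derivative terms, which the paper skips, via the boundedness of $\lambda^{(n+2)/2}e^{-\lambda/(8D)}$ on $[2,\infty)$.

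Two small slips, neither a gap. First, the larger weights $\sqrt t$ and $t$ make the target \emph{harder}, so your first-paragraph ``reduction'' is stated backwards, but your later treatment of the factors $(t/(t-s))^{1/2}$ and $t/(t-s)$ fixes this. Second, the scaled bound you display needs Propositions~\ref{prop3.8} and~\ref{prop2.4} applied at each order $0,1,2$ separately, not just at order $2$.
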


	\section{Existence and uniqueness}\label{sec3}
	\begin{definition}\label{def3.1}
		For every $0<T<\infty$, we define the function spaces
		\begin{equation}
			\begin{split}
				X_T=\bigg\{f|\|f\|_{X^T}=&\lim\limits_{0<t<T}\|f\|_{L^\infty(M)}+\lim\limits_{0<t<T}\|\nabla f\|_{L^\infty(M)}\\
				&+\lim\limits_{x\in M}\sup\limits_{0<r^2<T}r^{\frac{2}{n+4}}\|\nabla^2 f\|_{L^{n+4}(\Omega(x,r))}\bigg\}
			\end{split}
		\end{equation}
		and
		\begin{equation}
			\begin{split}
				Y_T=\bigg\{g|\|g\|_{Y_T}=\lim\limits_{x\in M}\sup\limits_{0<r^2<T}r^{\frac{2}{n+4}}\|g\|_{L^{n+4}(\Omega(x,r))}\bigg\}
			\end{split}
		\end{equation}
		where 
		\begin{equation}
			\begin{split}
				P(x,r):=B(x,r)\times (0,r^2),\mathrm{and}\quad \Omega(x,r):=B(x,r)\times (\frac{r^2}{2},r^2).
			\end{split}
		\end{equation}
	\end{definition}
	In this section, let $H$ be the fundamental solution for $\partial_t-\Delta_M$ and $K$ be the fundamental solution for $\partial_t-L$ in \eqref{eq1.4}. Consider equation \eqref{eq1.4}. We have 
	\begin{equation}
		u(x,t)=\int_{M\times [0,t]}(K(x,t;y,s)(-H_0+\mathcal{Q}(u))dyds.
	\end{equation}
	\begin{proposition}\label{prop3.1}
		There exists a constant $C_1=C_1(n,\kappa)>0$ such that 
		\begin{equation}
			\bigg\|\int_M\int_0^tK(x,t;y,s)Q(y,s)dyds\bigg\|_{X_T}\leq C\|Q\|_{Y_T}.
		\end{equation}
	\end{proposition}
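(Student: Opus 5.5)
Write $u(x,t)=\int_0^t\int_M K(x,t;y,s)Q(y,s)\,dy\,ds$. We bound the three pieces of $\|u\|_{X_T}$ (sup of $u$, sup of $\nabla u$, and the scale-invariant $L^{n+4}$ norm of $\nabla^2u$) separately, in the style of Koch--Tataru. Fix $(x,t)$ with $0<t<T$ and split $M\times[0,t]$ into a near region $\Omega(x,\sqrt t)=B_{\sqrt t}(x)\times[t/2,t]$ and its complement. Write $p=n+4$ and $p'=p/(p-1)$. The key arithmetic is that $Q\in Y_T$ gives, on every cylinder $\Omega(z,r)$,
\[
\|Q\|_{L^p(\Omega(z,r))}\le r^{-2/p}\|Q\|_{Y_T},
\]
while $|\Omega(z,r)|\sim r^{n+2}$ by Lemma \ref{lm2.1}.

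\textbf{Pointwise bounds for $u$ and $\nabla u$.} On the far region we use the off-diagonal corollary after Proposition \ref{prop2.2}: $|K|+\sqrt t|\nabla_xK|\le Ct^{-n/2}e^{-d^2/(4Dt)}$. This piece, however, still contains the slab $[0,t/2]$, where the kernel singularity at scale $t-s$ matters. So it is cleaner to organise all of $M\times[0,t]$ dyadically. Let $I_k=[t-2^{-k}t,\,t-2^{-k-1}t]$, cover $M\times I_k$ by cylinders of radius $r_k\sim\sqrt{2^{-k}t}$, and sum the Gaussian tails over the cover. On each such cylinder, H\"older with Proposition \ref{prop2.2} gives
\[
\int |\nabla K||Q|\le C r_k^{-(n+1)}e^{-c\,d^2/r_k^2}\,|\Omega|^{1/p'}\,r_k^{-2/p}\|Q\|_{Y_T}
\sim e^{-c\,d^2/r_k^2}\,r_k^{-(n+1)+(n+2)(1-1/p)-2/p}\|Q\|_{Y_T}.
\]
The exponent is $1-(n+4)/p=0$ (this is exactly why $p=n+4$ is chosen). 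The sum over the Gaussian cover converges by the volume bounds, so each dyadic layer contributes $O(\|Q\|_{Y_T})$. This alone would diverge logarithmically over $k$, so we must exploit more. For the near layers we use $\nabla_x\int K(x,t;y,s)\,dy$ cancellation, or more simply a slightly better local integrability, namely $L^p$ with $p>n+2$ in the time direction. If that fails, we accept that the $Y_T$ control sits at the borderline and use instead the $W^{2,p}$ bound from the next step together with the Sobolev embedding Lemma \ref{sob} ($p=n+4>n+2$, with $\alpha=2/(n+4)$) to get $\nabla u$ locally. The bound on $u$ itself is easier: the same computation has exponent $1-(n+4)/p+1>0$, giving $\sum_k 2^{-k/2}\sqrt t$, which is finite.

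\textbf{Second derivatives.} On $\Omega(x,r)$, split $Q=Q\chi_{P(x,2r)}+Q(1-\chi_{P(x,2r)})$. For the far part, $\nabla^2K$ is smooth with Gaussian decay (the corollary after Proposition \ref{prop2.4} type bounds for $K$), so the same dyadic computation gives a pointwise bound $r^{-1}\cdot$ (convergent sum) $\cdot\|Q\|_{Y_T}$. The $L^p$ norm over $\Omega(x,r)$, of volume $r^{n+2}$, then yields $r^{2/p}\|\nabla^2u\|_{L^p}\lesssim\|Q\|_{Y_T}$ after checking the powers. For the local part we apply the Calder\'on--Zygmund estimate Lemma \ref{Lp-estimate} in $W^{2,p}$ harmonic coordinates (Corollary \ref{coro3.5}, valid since $r^2<T<r_\kappa^2$), rescaled to $\Omega(x,r)$. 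This gives
\[
\|\nabla^2u\|_{L^p(\Omega(x,r))}\le C\bigl(r^{-2}\|u\|_{L^\infty}\,r^{(n+2)/p}+\|Q\|_{L^p(\Omega(x,2r))}\bigr),
\]
and both terms are controlled once the $L^\infty$ bound from the first step is known.

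\textbf{Main obstacle.} The hardest step is the pointwise gradient bound near the diagonal, where the scaling is exactly critical and naive dyadic summation diverges logarithmically. I would first try to circumvent it by deriving $\nabla u$ from the localized $W^{2,p}$ estimate plus the embedding Lemma \ref{sob}, which is subcritical since $p>n+2$. The remaining regions are then handled by the Gaussian-tail sums.
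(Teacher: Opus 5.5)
There is a genuine gap in how you use the $Y_T$ norm on the slab $s\in[0,t/2]$. Your layer $k=0$ is $I_0=[0,t/2]$ with $r_0\sim\sqrt t$, and you bound $\|Q\|_{L^{p}(B(z,r_0)\times I_0)}\le r_0^{-2/p}\|Q\|_{Y_T}$. The $Y_T$ norm only controls $Q$ on the cylinders $\Omega(z,r)=B(z,r)\times(r^2/2,r^2)$, so at time $s$ it sees only balls of radius $\sim\sqrt s$.

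The claimed inequality is in fact false. Take $Q(y,s)=s^{-1/2}$: on each $\Omega(z,r)$ one has $\|Q\|_{L^{n+4}}\sim r^{-2/(n+4)}$, so $\|Q\|_{Y_T}\sim 1$, yet $\int_0^{t/2}s^{-(n+4)/2}\,ds=\infty$. This is exactly the singularity $\mathcal{Q}(u)$ may have when $|\nabla^2u|\sim s^{-1/2}$, which $X_T$ permits.

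The difficulty on this slab is also not a kernel singularity, since $t-s\ge t/2$ there. It is the weak control of $Q$ near $s=0$. The missing idea, which is the core of the paper's far-region estimate, is to decompose dyadically in $s$ rather than in $t-s$:
\begin{itemize}
\item cover $B(p,\sqrt t)\times(2^{-m-1}t,2^{-m}t)$ by $\sim 2^{mn/2}$ cylinders $\Omega(y_i,\rho)$ with $\rho=2^{-m/2}\sqrt t$;
\item apply H\"older on each to get $\|Q\|_{L^1(\Omega(y_i,\rho))}\le C\rho^{n+1}\|Q\|_{Y_T}$;
\item sum the geometric series $\sum_m 2^{-m/2}$ to obtain $\|Q\|_{L^1(B(p,\sqrt t)\times[0,t])}\le Ct^{(n+1)/2}\|Q\|_{Y_T}$;
\item pair this with the sup bounds $t^{-n/2}$, $t^{-(n+1)/2}$ (and $t^{-(n+2)/2}$ for your far second-derivative piece) times $e^{-d^2(x,p)/(4Dt)}$, summed over a cover of $M$ by balls $B(p_i,\sqrt t)$.
\end{itemize}
Only $L^1$ control, not $L^p$ control, is available there. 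Without this step, neither $|u(\cdot,t)|\le C\sqrt t\,\|Q\|_{Y_T}$ nor your far-part bounds are justified.

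Your ``main obstacle'' near the diagonal is spurious; it comes from the opposite misuse. For $k\ge 1$ the cylinders $B(z,r_k)\times I_k$ lie in $[t/2,t]$, where $Y_T$ sees only the scale $\sqrt t$ and gives $\|Q\|_{L^p(B(x,\sqrt t)\times[t/2,t])}\le Ct^{-1/p}\|Q\|_{Y_T}$ for the whole union. Charging each small cylinder $r_k^{-2/p}\|Q\|_{Y_T}$ separately throws away the factor $(r_k^2/t)^{1/p}=2^{-k/p}$ that makes the layers summable.

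Equivalently, as the paper does, apply H\"older on all of $\Omega(x,\sqrt t)$. Since $p=n+4>n+2$, $\nabla K$ lies in $L^{p'}$ there with norm $O(t^{1/(n+4)})$, which exactly compensates $t^{-1/(n+4)}$. No cancellation is needed and no logarithmic divergence arises, so your hedged alternatives should be replaced by this argument.

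Your fallback is nonetheless sound, and even attractive, once the $L^\infty$ bound $|u(\cdot,s)|\le C\sqrt s\,\|Q\|_{Y_T}$ is correctly proved. The interior Calder\'on--Zygmund estimate (Lemma \ref{Lp-estimate}) on $B(x,2r)\times[r^2/4,r^2]$, fed with $\|u\|_{L^\infty}\le Cr\|Q\|_{Y_T}$ and $\|Q\|_{L^p}\le Cr^{-2/p}\|Q\|_{Y_T}$, gives the $\nabla^2u$ bound directly, with no near/far splitting of $Q$. Lemma \ref{sob} then gives the bound on $\nabla u$. This route uses only the Gaussian upper bound on $K$ and avoids the paper's detour through $G$ and $|A|^2g+Q$.
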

	\begin{proof}
		We define 
		\begin{equation}
			g(x,t):=\int_M\int_0^tK(x,t;y,s)Q(y,s)dyds
		\end{equation}
		
		\textbf{ Step 1. Estimate $\|g\|_{L^\infty}$:}
		
		\begin{equation}
			\begin{split}
				|g(x,t)|&=\int_{\Omega(x,t)}K(x,t;y,s)Q(y,s)dyds\\
				&+\int_{M\times [0,t]\setminus\Omega(x,t)}K(x,t;y,s)Q(y,s)dyds\\
				&=\uppercase\expandafter{\romannumeral1}+\uppercase\expandafter{\romannumeral2}.
			\end{split}
		\end{equation}
		By H\"older inequality,
		\begin{equation}\label{eq3.8}
			\begin{split}
				\uppercase\expandafter{\romannumeral1}&\leq \|Q\|_{L^{n+4}(\Omega(x,t))}\|K\|_{L^{\frac{n+4}{n+3}}(\Omega(x,t))} .
			\end{split}
		\end{equation}
		Using inequality \eqref{eq2.3}, we have
		\begin{equation}
			\begin{split}
				&\int_{t/2}^t\int_{B(x,\sqrt{t})}|K(x,t;y,s)|^{\frac{n+4}{n+3}}dyds \\
				&\quad \leq C\int_0^{t/2}\int_{B(x,\sqrt{t})}\tau^{-\frac{n(n+4)}{2(n+3)}}e^{-\frac{\frac{n+4}{n+3}d^2(x,y)}{4D\tau}}dyd\tau \\
				&\quad = C\int_0^{t/2}\tau^{-\frac{n}{2(n+3)}}\int_{B(x,\sqrt{t})}\tau^{-\frac{n}{2}}e^{-\frac{\frac{n+4}{n+3}d^2(x,y)}{4D\tau}}dyd\tau \\
				&\quad \leq C\int_0^{t/2}\tau^{-\frac{n}{2(n+3)}}d\tau \\
				&\quad \leq Ct^{\frac{n+6}{2(n+3)}}.
			\end{split}
		\end{equation}
		i.e,
		\begin{equation}\label{eq3.10}
			\begin{split}
				\|K(x,t;\cdot,\cdot)\|_{L^{\frac{n+4}{n+3}}(\Omega(x,t))}\leq Ct^{\frac{n+6}{2(n+4)}}.
			\end{split}
		\end{equation}
		Similarly, by Proposition \ref{prop2.3}, we obtain
		\begin{equation}
			\begin{split}
				\|\nabla K(x,t;\cdot,\cdot)\|_{L^\frac{n+4}{n+3}}\leq Ct^{-\frac{n+3}{2(n+4)}}Ct^{\frac{n+6}{2(n+4)}}= Ct^{\frac{3}{2(n+4)}}.
			\end{split}
		\end{equation}
		Combining \eqref{eq3.8} and \eqref{eq3.10}, 
		\begin{equation}\label{eq3.12}
			\begin{split}
				\uppercase\expandafter{\romannumeral1}&\leq  Ct^{\frac{n+6}{2(n+4)}}\|Q\|_{L^{n+4}}\leq C\|Q\|_{Y_T}.
			\end{split}
		\end{equation}
		
		For any $(y,s)\in M\setminus \Omega(x,t)$. By the heat kernel estimates \eqref{eq2.3}, we have
		\begin{equation}\label{eq3.13}
			\begin{split}
				& \int_{(B(p,\sqrt{t})\times [0,t])\setminus \Omega(x,t)}|K(x,t;y,s)Q(y,s)|dyds\\
				&\quad	\leq 	Ct^{-\frac{n}{2}}\int_0^t \int_{B(p,\sqrt{t})}e^{-\frac{d^2(x,y)}{4Dt}}|Q(y,s)|dyds\\
				&\quad\leq Ct^{-\frac{n}{2}}e^{-\frac{d^2(x,p)}{4Dt}} \int_0^t\int_{B(p,\sqrt{t})}|Q(y,s)|dyds\\
				&\quad \leq Ct^{-\frac{n}{2}}e^{-\frac{d^2(x,p)}{4Dt}} \sum_{m=0}^\infty\int_{2^{-m-1}t}^{2^{-m}t}\int_{B(p,\sqrt{t})}|Q(y,s)|dyds.
			\end{split}
		\end{equation}
		By Lemma \ref{lm2.1}, we can cover $B(p,\sqrt{t})\times (2^{-m-1}t,2^{-m}t)$ by approximately $C(n,m)\sim 2^{mn}$ cylinders of the form $T_m(y_i):=B(y,2^{-m/2}\sqrt{t})\times (2^{-m-1}t,2^{-m}t)$ and we use H\"older’s inequality to estimate
		\begin{equation}
			\begin{split}
				\|Q\|_{L^1(T_m(y))}&\leq \|Q\|_{L^{n+4}(T_m(y))}\|1\|_{L^{\frac{n+4}{n+3}}(T_m(y))}\\
				&\leq C \|Q\|_{L^{n+4}(T_m(y))}(2^{-m}t)^{\frac{n(n+3)}{2(n+4)}} (2^{-(m+1)}t)^{\frac{n+3}{n+4}}\\
				&\leq C(2^{-m/2}\sqrt{t})^{n+1}(2^{-m/2}\sqrt{t})^{\frac{2}{n+4}}\|Q\|_{L^{n+4}(T_m(y))}\\
				&\leq C(2^{-m/2}\sqrt{t})^{n+1}\|Q\|_{Y_T}.
			\end{split}
		\end{equation}
		Hence,
		\begin{equation}\label{eq3.15}
			\begin{split}
				&t^{-\frac{n}{2}}\int_{2^{-m-1}t}^{2^{-m}t}\int_{B(p,\sqrt{t})}|Q(y,s)|dyds\\
				&\quad \leq \sum_{i=1}^{C(n,m)}t^{-\frac{n}{2}}\int_{2^{-m-1}t}^{2^{-m}t}\int_{T_m(y_i)}|Q(y,s)|dyds\\
				&\quad \leq C2^{\frac{mn}{2}}2^{-\frac{(n+1)m}{2}}\sqrt{t}\|Q\|_{Y_T}\\
				&\quad\leq C2^{-\frac{m}{2}}\sqrt{t}\|Q\|_{Y_T}.
			\end{split}
		\end{equation}
		Plugging it into \eqref{eq3.13},
		\begin{equation}
			\begin{split}
				& \int_{(B(p,\sqrt{t})\times [0,t])\setminus \Omega(x,t)}|K(x,t;y,s)Q(y,s)|dyds\\
				&\quad\leq C\sqrt{t}e^{-\frac{d^2(x,p)}{4Dt}}\sum_{m=0}^\infty 2^{-\frac{m}{2}}\|Q\|_{Y_T} \\
				&\quad\leq C\sqrt{t}e^{-\frac{d^2(x,p)}{4Dt}}\|Q\|_{Y_T} .
			\end{split}
		\end{equation}
		We choose $p_i$ such that $\{B(p_i,\sqrt{t})\}_i$ is a cover of $M$ and $\{B(p_i,\frac{1}{5}\sqrt{t})\}_i$ are disjoint and $p_0=x$. Then we have
		\begin{equation}\label{eq3.17}
			\begin{split}
				II\leq C\sqrt{t}\sum_i\exp\bigg(-\frac{d^2(x,p_i)}{4Dt}\bigg)\|Q\|_{Y_T}\leq C\sqrt{t}\|Q\|_{Y_T}.
			\end{split}
		\end{equation}
		
		\textbf{Step 2. Estimate $\|\nabla g\|_{L^\infty}$:}
		
		\begin{equation}
			\begin{split}
				|\nabla g(x,t)|&\leq \bigg|\int_{\Omega(x,t)}\nabla K(x,t;y,s)Q(y,s)dyds\bigg|\\
				&+\bigg|\int_{M\times [0,t]\setminus\Omega(x,t)}\nabla K(x,t;y,s)Q(y,s)dyds\bigg|\\
				&=\uppercase\expandafter{\romannumeral3}+\uppercase\expandafter{\romannumeral4}.
			\end{split}
		\end{equation}
		Similar to \eqref{eq3.12} and \eqref{eq3.17}, we have
		\begin{equation}
			\begin{split}
				\uppercase\expandafter{\romannumeral3}&\leq \|Q\|_{L^{n+4}(\Omega(x,t))}\|\nabla K\|_{L^{\frac{n+4}{n+3}}(\Omega(x,t))}\\
				& \leq C t^{\frac{3}{2(n+4)}}\|Q\|_{L^{n+4}(\Omega(x,t))}\\
				&\leq C\|Q\|_{Y_T}.
			\end{split}
		\end{equation}
		and
		\begin{equation}
			\begin{split}
				\uppercase\expandafter{\romannumeral4}\leq \sum_i\exp\bigg(-\frac{d^2(x,p_i)}{4Dt}\bigg)\|Q\|_{Y_T}\leq C\|Q\|_{Y_T}.
			\end{split}
		\end{equation}
		Combining above inequalities, we get
		\begin{equation}\label{eq3.21}
			\begin{split}
				\|g(x,t)\|_{C^{0,1}}=	\|g(x,t)\|_{L^\infty}+\|\nabla g(x,t)\|_{L^\infty}\leq C\|Q\|_{Y_T}.
			\end{split}
		\end{equation}
		
		\textbf{Step 3. Estimate $\|\nabla^2 g\|_{L^{n+4}}$:}
		
		We do not have the second derivative estimate of $K$. However, $g$ satisfies the following equation
		\begin{equation}
			\partial_tg=\Delta g+|A|^2g+Q=\Delta g+\tilde{Q}.
		\end{equation}
		Here $\tilde{Q}=|A|^2g+Q$. Since we have the second derivative estimate of $H$
		
		\begin{equation}\label{eq3.22}
			\begin{split}
				|\nabla^2 g(x,t)|&=\bigg|\int_{M\times [0,t]}\nabla^2 G(x,t;y,s)\tilde{Q}(y,s)dyds\bigg|\\
				&\leq \bigg|\int_{\Omega(x_0,r)}\nabla^2 G(x,t;y,s)\tilde{Q}(y,s)dyds\bigg|\\
				&+\bigg|\int_{M\times [0,t]\setminus\Omega(x_0,r)}\nabla^2 G(x,t;y,s)\tilde{Q}(y,s)dyds\bigg|\\
				&=\uppercase\expandafter{\romannumeral5}+\uppercase\expandafter{\romannumeral6}.
			\end{split}
		\end{equation}
		By triangle inequality,
		\begin{equation}
			\begin{split}
				\|\nabla^2 g(x,t)\|_{L^{n+4}(\Omega(x_0,r))}\leq \|\uppercase\expandafter{\romannumeral5}\|_{L^{n+4}(\Omega(x_0,r))}+\|\uppercase\expandafter{\romannumeral6}\|_{L^{n+4}(\Omega(x_0,r))}.
			\end{split}
		\end{equation}
		Using the same trick as in the estimates of $\uppercase\expandafter{\romannumeral2}$, we obtain 
		\begin{equation}
			\begin{split}
				\uppercase\expandafter{\romannumeral6}\leq Ct^{-1/2}\|\tilde{Q}\|_{Y_T}\leq Ct^{-1/2}\|Q\|_{Y_T}+Ct^{-1/2}\||A|^2g\|_{Y_T}.	
			\end{split}
		\end{equation}
		Using \eqref{eq3.21}, we have
		\begin{equation}
			\begin{split}
				\||A|^2g\|_{Y_T}&\leq C\kappa^2\|Q\|_{Y_T}.
			\end{split}
		\end{equation}
		So,
		\begin{equation}\label{eq3.25}
			\begin{split}
				&r^{\frac{2}{n+4}}\|\uppercase\expandafter{\romannumeral6}\|_{L^{n+4}(\Omega(x,r))}\\
				&\quad\leq C\|Q\|_{Y_T}r^{\frac{2}{n+4}}\|t^{-\frac12}\|_{L^{n+4}(\Omega(x,r))}\\
				&\quad\leq C\|Q\|_{Y_T}.
			\end{split}
		\end{equation}
		
		Define 
		\begin{equation}
			\tilde{g}(x,t)=\int_{\Omega(x,r)} G(x,t;y,s)\tilde{Q}(y,s)dyds.
		\end{equation}
		By direct calculation, we have
		\begin{equation}\label{eq3.27}
			\tilde{g}_t-\Delta\tilde{g}=\tilde{Q}, \quad (x,t)\in \Omega(x_0,r),
		\end{equation}
		and
		\begin{equation}
			\tilde{g}(x,r^2/2)=0  \quad x\in B_r(x_0).
		\end{equation}
		We can extend the domain of $\tilde{g}$  to $B_{2r}(x_0)\times (\tfrac14r^2,r^2)$ by defining
		\begin{equation}
			\tilde{g}(x,t)=\int_0^t\int_{M} G(x,t;y,s)\tilde{Q}\chi_{\Omega(x_0,r)}dyds, \quad (x,t)\in B_{2r}(x_0)\times (0,r^2),
		\end{equation}
		where $\chi$ is the characteristic function.
		We can choose $r$ small enough such that \eqref{eq3.27} can be written as a parabolic equation in harmonic coordinate. Hence, choosing $p=n+4$ in Lemma \ref{Lp-estimate}, we have

		\begin{equation}\label{eq3.31}
			\begin{split}
				&r^{\frac{2}{n+4}}\|\uppercase\expandafter{\romannumeral5}\|_{L^{n+4}(\Omega(x_0,r))}\\
				&\quad\leq Cr^{\frac{2}{n+4}}\|Q\chi_{\Omega(x,r)}\|_{L^{n+4}(B_{2r}(x_0)\times (0,r^2))}\\
				&\quad= Cr^{\frac{2}{n+4}}\|Q\|_{L^{n+4}(B_{2r}(x_0)\times (r^2/2,r^2))}\\
				&\quad\leq C\|Q\|_{Y_T}.
			\end{split}
		\end{equation}
		The $L^{n+4}$ norm estimate follows form \eqref{eq3.22}, \eqref{eq3.25} and \eqref{eq3.31}.
	\end{proof}
	
	\begin{proposition}\label{prop3.2}
		Let $\mathcal{Q}(u)$ be as defined in  \eqref{eq1.4}. For any $T<1$, there exists a small constant $\delta<1$ and a large constant $C_2=C_2(n,\kappa)$ such that for each $X\in M$ and $r\in (0,\sqrt{T})$, the following property holds,
		\begin{itemize}
			\item If $\|u\|_{X_T}<\delta$, then 
			\begin{equation}
				\|\mathcal{Q}(u)\|_{Y_T}\leq C_2\|u\|^2_{X_T},
			\end{equation}
			\item If $\|u_1\|_{X_T}, \|u_2\|_{X_T}<\delta$, then
			\begin{equation}
				\|\mathcal{Q}(u_1)-\mathcal{Q}(u_2)\|_{Y_T}\leq C_2(\|u_1\|_{X_T}+\|u_2\|_{X_T})\|u_1-u_2\|_{X_T}.
			\end{equation}
		\end{itemize}
	\end{proposition}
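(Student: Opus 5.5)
The plan is to apply the pointwise bounds of Proposition \ref{prop1.4} and then take the scale-invariant $L^{n+4}$ norm over each cylinder $\Omega(x,r)$, $r^2<T$. The only work is sorting the resulting terms into those controlled by the $L^\infty$ parts of $\|\cdot\|_{X_T}$ and those controlled by the $\nabla^2$ part.

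First choose $\delta$ no larger than the threshold in Proposition \ref{prop1.4}. Then $\|u\|_{X_T}<\delta$ gives $\|u(\cdot,t)\|_{C^{0,1}}<\delta$ for every $t$, so \eqref{eq1.5} holds pointwise:
\[
|\mathcal{Q}(u)|\le C\bigl(|u|^2+|\nabla u|^2\bigr)+C|\nabla^2u|\bigl(|u|+|\nabla u|\bigr).
\]
Fix $x\in M$ and $r\in(0,\sqrt T)$.

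For the first group of terms, Lemma \ref{lm2.1} gives $|\Omega(x,r)|\le Cr^{n+2}$, since $r<1\le$ the scale where the volume comparison holds once $T<r_\kappa^2$. Hence
\[
r^{\frac{2}{n+4}}\bigl\||u|^2+|\nabla u|^2\bigr\|_{L^{n+4}(\Omega(x,r))}\le C r^{\frac{2}{n+4}}r^{\frac{n+2}{n+4}}\|u\|_{X_T}^2=Cr\|u\|_{X_T}^2\le C\|u\|_{X_T}^2,
\]
using $r<1$. For the second group, pull out $\sup(|u|+|\nabla u|)\le\|u\|_{X_T}$. What remains is $r^{\frac{2}{n+4}}\|\nabla^2u\|_{L^{n+4}(\Omega(x,r))}\le\|u\|_{X_T}$. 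Taking the supremum over $x$ and $r$ gives the first bullet with $C_2$ depending only on $n,\kappa$ (through the constant of Proposition \ref{prop1.4} and the volume bound).

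For the difference estimate, apply \eqref{eq1.6} with $u=u_1$, $v=u_2$. The first line of \eqref{eq1.6} is a factor bounded by $\|u_1\|_{X_T}+\|u_2\|_{X_T}$ times $|w|+|\nabla w|+|\nabla^2w|$, where $w=u_1-u_2$. Its zeroth- and first-order parts are handled by the volume bound exactly as above, giving $Cr\|w\|_{X_T}$. The $\nabla^2w$ part is handled directly by the $\nabla^2$ term of $\|w\|_{X_T}$.

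The second line of \eqref{eq1.6}, $(|\nabla w|+|w|)|\nabla^2u_1|$, is bounded by $\|w\|_{X_T}\,|\nabla^2u_1|$. Its scaled norm is therefore at most $\|w\|_{X_T}\|u_1\|_{X_T}$. Summing gives the second bullet.

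The main obstacle is a mismatch in \eqref{eq1.6} as stated: its prefactor involves $|\nabla v|_{C^{0,1}}$, which would require a bound on $\nabla^2u_2$ in $L^\infty$ that $X_T$ does not provide. I would handle this by rederiving \eqref{eq1.6} from the structure in Corollary \ref{coro1.2}. Write $\mathcal{Q}(u)=a(p,u,\nabla u)\ast\nabla^2u+b(p,u,\nabla u)$, where $a$ vanishes to first order and $b$ to second order at $(u,\nabla u)=0$. Then
\[
\mathcal{Q}(u_1)-\mathcal{Q}(u_2)=a(u_2,\nabla u_2)\ast\nabla^2w+\bigl[a(u_1,\nabla u_1)-a(u_2,\nabla u_2)\bigr]\ast\nabla^2u_1+b(u_1,\nabla u_1)-b(u_2,\nabla u_2).
\]
Here $|a(u_2,\nabla u_2)|\le C\|u_2\|_{C^{0,1}}$, the bracket is at most $C(|w|+|\nabla w|)$, and the $b$-difference is at most $C(\|u_1\|_{C^{0,1}}+\|u_2\|_{C^{0,1}})(|w|+|\nabla w|)$. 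These give exactly the required structure using only $C^{0,1}$ norms of $u_1,u_2$, so the estimate above goes through.
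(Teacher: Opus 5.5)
Your proposal is correct and follows the same route as the paper. Both apply the pointwise bounds of Proposition \ref{prop1.4}, pull out the $C^{0,1}$ factors, and use $r^{\frac{2}{n+4}}\|1\|_{L^{n+4}(\Omega(x,r))}\le Cr$ for the lower-order terms. Your quasilinear rederivation of the difference bound is exactly the $C^{0,1}$-only version the paper tacitly uses in \eqref{eq3.30}, and you state it more cleanly than the paper does. One small fix: when $i_0\le r<\sqrt{T}<1$, the upper volume bound should come from Bishop--Gromov with $|\mathrm{Ric}|\le C(n)\kappa^2$, not from Lemma \ref{lm2.1}.
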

	\begin{proof}
		From Proposition \ref{prop1.4}, we know that
		\begin{equation}
			\begin{split}
				|\mathcal{Q}(u)|&\leq C|\nabla^2 u|(|\nabla u|+|u|)+C(|\nabla u|^2+|u|^2)\\
				&\leq C|\nabla^2 u|\|u\|_{C^{0,1}}+C\|u\|^2_{C^{0,1}}
			\end{split}
		\end{equation}
		and
		\begin{equation}\label{eq3.30}
			\begin{split}
				|Q(u_1)-Q(u_2)|&\leq |\nabla^2(u_1-u_2)|(|u_1|+|u_2|+|\nabla u_1|+|\nabla u_2|)\\
				&+(|\nabla(u_1-u_2)|)(|u_1-u_2|+|\nabla(u_1-u_2)|)\\
				&\leq C|\nabla^2 (u_1-u_2)|\|u_1\|_{C^{0,1}}+|\nabla^2u_1|\|u_1-u_2\|_{C^{0,1}}\\
				&+C(\|u_1\|^2_{C^{0,1}}+\|u_2\|^2_{C^{0,1}})(\|u_1-u_2\|^2_{C^{0,1}}).
			\end{split}
		\end{equation}
		Hence, we have
		\begin{equation}
			\begin{split}
				\|\mathcal{Q}(u)\|_{L^{n+4}(\Omega(x,r))}&\leq C(	\||\nabla^2 u|\|u\|_{C^{0,1}}\|_{L^{n+4}(\Omega(x,r))}+	\|u\|^2_{C^{0,1}}\|1\|_{L^{n+4}(\Omega(x,r))})\\
				&=C(\|u\|_{C^{0,1}}\|\nabla^2 u\|_{L^{n+4}(\Omega(x,r))}+\|u\|^2_{C^{0,1}}\|1\|_{L^{n+4}(\Omega(x,r))})
			\end{split}
		\end{equation}
		i.e,
		\begin{equation}
			\begin{split}
				\|\mathcal{Q}(u)\|_{Y_T}
				&=\sup\limits_{x\in M,0<r^2<T} r^{\frac{2}{n+4}}\|\mathcal{Q}(u)\|_{L^{n+4}(\Omega(x,r))}\\
				&\leq C\sup\limits_{x\in M,0<r^2<T} r^{\frac{2}{n+4}}\|u\|^2_{C^{0,1}}\|1\|_{L^{n+4}(\Omega(x,r))}\\
				&+ C\sup\limits_{x\in M,0<r^2<T} r^{\frac{2}{n+4}}\|u\|_{C^{0,1}}\|\nabla^2 u\|_{L^{n+4}(\Omega(x,r))}\\
				&\leq C\|Q\|_{X_T}^2.
			\end{split}
		\end{equation}
		By \eqref{eq3.30},	we conclude that
		\begin{equation}
			\begin{split}
				\|\mathcal{Q}(u_1)-\mathcal{Q}(u_2)\|_{Y_T}&\leq C(\|u_1\|_{X_T}+\|u_2\|_{X_T})\|u_1-u_2\|_{X_T}.
			\end{split}
		\end{equation}
	\end{proof}
	
	\begin{proposition}\label{prop3.3}
		There exists a constant $C>0$ such that 
		\begin{equation}
			\bigg\|\int_{M\times [0,t]}K(x,t;y,s)H_0dyds \bigg\|_{X_T}\leq C_3\sqrt{T}\|H_0\|_{L^\infty(M)}.
		\end{equation}
	\end{proposition}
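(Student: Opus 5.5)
We set $h(x,t):=\int_{M\times[0,t]}K(x,t;y,s)H_0(y)\,dyds$ and estimate the three pieces of the $X_T$ norm separately. The point is that the source $H_0$ is time independent and bounded by $n\kappa$. So we never need the fine local $L^{n+4}$ decomposition of Proposition \ref{prop3.1}. Instead we use the Gaussian bounds of Proposition \ref{prop2.2} integrated over all of $M$. Note that applying Proposition \ref{prop3.1} directly would only give $C\|H_0\|_{Y_T}\le C T^{\frac{n+3}{n+4}}\|H_0\|_{L^\infty}$ after computing $\|1\|_{L^{n+4}(\Omega(x,r))}$. This already yields a power of $T$, so it can serve as a fallback. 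The direct route below, however, gives the sharper $\sqrt T$.

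\textbf{Step 1 ($L^\infty$ and gradient bounds).} By \eqref{eq2.3} and Lemma \ref{lm2.1}, we have $\int_M (t-s)^{-n/2}e^{-d^2(x,y)/4D(t-s)}dy\le C$ uniformly for $0<t-s\le T<r_\kappa^2$. To see this, split $M$ into the annuli $B(x,2^{k+1}\sqrt{t-s})\setminus B(x,2^k\sqrt{t-s})$. Their volume is at most $C2^{kn}(t-s)^{n/2}$, using the local volume bound together with the Bishop-Gromov comparison from $|Rm|\le c\kappa^2$. Hence
\begin{equation*}
|h(x,t)|\le \|H_0\|_{L^\infty}\int_0^t\int_M K\,dyds\le Ct\|H_0\|_{L^\infty}.
\end{equation*}
Alternatively, the maximum principle comparison with $e^{\kappa^2 t}t\|H_0\|_\infty$ gives the same bound. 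For the gradient, the second line of \eqref{eq2.3} gives
\begin{equation*}
|\nabla h(x,t)|\le C\|H_0\|_{L^\infty}\int_0^t (t-s)^{-1/2}ds\le C\sqrt{t}\,\|H_0\|_{L^\infty}.
\end{equation*}
Both bounds are $\le C\sqrt T\|H_0\|_{L^\infty}$, since $T<1$.

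\textbf{Step 2 (second derivatives).} As in Step 3 of Proposition \ref{prop3.1}, no second-derivative bound for $K$ is available. We therefore rewrite the equation as $\partial_t h=\Delta h+\tilde Q$ with $\tilde Q=|A|^2h-H_0$. Step 1 gives $\|\tilde Q\|_{L^\infty}\le C\|H_0\|_{L^\infty}$, so $h=\int G\tilde Q$ with $G$ the heat kernel of Proposition \ref{prop2.3}. Fix $x_0$ and $r^2<T$, and split the integral over $\Omega(x_0,r)$ and its complement, exactly as in \eqref{eq3.22}.

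For the near part we argue as in \eqref{eq3.31}. The Calderón–Zygmund estimate (Lemma \ref{Lp-estimate}) with $p=n+4$ in harmonic coordinates bounds $r^{\frac{2}{n+4}}\|\nabla^2\tilde g\|_{L^{n+4}}$ by $Cr^{\frac{2}{n+4}}\|\tilde Q\|_{L^{n+4}(B_{2r}\times(r^2/4,r^2))}$. This is at most $Cr^{\frac{2}{n+4}}r^{\frac{n+2}{n+4}}\|H_0\|_\infty=Cr\|H_0\|_\infty\le C\sqrt T\|H_0\|_\infty$.

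For the far part, the Corollary after Proposition \ref{prop2.3} gives the pointwise bound $|\nabla^2G|\le Ct^{-1-n/2}e^{-d^2/4Dt}$ there. At first sight this yields only $|\mathrm{VI}|\le C\|H_0\|_\infty$, which does not carry the factor $r$. To recover it we use the time-independence of $H_0$: the far region in time is $s<r^2/2$, and $\nabla^2$ of the heat semigroup applied to a bounded function over a time interval of length $\sim r^2$ can be controlled. Concretely, we write $\int_0^{t}\nabla^2_x G\,ds$ and exploit the identity $\partial_s G=-\Delta_y G$ to integrate by parts in $y$. This moves one derivative onto $H_0$; since $M$ has bounded geometry we then use the first-derivative bound on $G$ together with $|\nabla H_0|$. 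Alternatively, we bound $|\nabla^2 G|\lesssim (t-s)^{-1}$ in $L^1_y$, which gives a logarithm. Whichever route is used, we aim for $\le C t^{\alpha}$ for some $\alpha>0$, and then $r^{\frac2{n+4}}\|\cdot\|_{L^{n+4}(\Omega(x_0,r))}\le C r^{1+2\alpha}$ suffices.

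\textbf{Main obstacle.} The delicate point is this far-field second-derivative term. The crude estimate gives only a $\log$ or $O(1)$ bound, not $\sqrt T$. If the integration by parts (which needs $\nabla H_0$, controlled via Proposition \ref{propeh}-type interior bounds on $M$ only if $\nabla A$ is bounded) is unavailable, I would fall back on the estimate via Proposition \ref{prop3.1}. Namely, $\|H_0\|_{Y_T}\le \sup_r r^{\frac{2}{n+4}}\|H_0\|_\infty (Cr^{n+2})^{\frac1{n+4}}\le C T^{\frac12}\|H_0\|_{L^\infty}$, because $r^{\frac{2}{n+4}+\frac{n+2}{n+4}}=r$. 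Applied to the $Y_T$ norm with $r^2<T$, this gives exactly $\sqrt T$. This fallback is in fact cleaner, and I would present it as the main argument.
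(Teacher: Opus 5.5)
Your final argument is exactly the paper's proof: apply Proposition~\ref{prop3.1} with $Q=H_0$, then use $r^{\frac{2}{n+4}}\|1\|_{L^{n+4}(\Omega(x,r))}\le C r^{\frac{2}{n+4}}r^{\frac{n+2}{n+4}}=Cr\le C\sqrt{T}$ by Lemma~\ref{lm2.1}. So the proposal is correct once you drop the unfinished direct route of Step~2, and also your opening claim that this route gives $T^{\frac{n+3}{n+4}}$ (with $\sqrt{T}$ called ``sharper''), which is a miscomputation you correct yourself at the end; the exponent is exactly $\frac12$.
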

	\begin{proof}
		Let $Q(y,s)=H_0(y)$, by Proposition \ref{prop3.1},
		\begin{equation}
			\begin{split}
				&\bigg\|\int_{M\times [0,t]}K(x,t;y,s)H_0dyds \bigg\|_{X_T}\\
				&\quad\leq C\|Q\|_{Y_T}\\
				&\quad=C\sup\limits_{x\in M}\sup\limits_{0<r<\sqrt{t}}r^{\frac{2}{n+4}}\|Q\|_{L^{n+4}(\Omega(x,r))}\\
				&\quad\leq C\|H_0\|_{L^\infty}\sup\limits_{x\in M}\sup\limits_{0<r<\sqrt{t}}r^{\frac{2}{n+4}}\|1\|_{L^{n+4}(\Omega(x,r))}\\
				&\quad\leq C\sqrt{T}\|H_0\|_{L^\infty}.
			\end{split}
		\end{equation}
	\end{proof}

	We are ready to prove the existence of the mean curvature flow.
	\begin{theorem}\label{thm3.4}
		For any $M$ with bounded second fundamental form, there exist $T>0$ and $\delta>0$ and a unique $u(\cdot,t)\in X_T$ to the equation \eqref{eq1.4}
		\begin{equation}
			u(x,t)=\int_{M\times [0,t]}K(x,t;y,s)(-H_0+\mathcal{Q}(u))dyds,
		\end{equation}
		where $u(\cdot,0)=0$ and $\|u\|_{X_T}\leq \delta$.
		
	\end{theorem}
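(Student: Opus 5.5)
We obtain Theorem \ref{thm3.4} from the contraction mapping principle applied to the map
\begin{equation*}
\Phi(u)(x,t):=\int_{M\times[0,t]}K(x,t;y,s)\big(-H_0+\mathcal{Q}(u)\big)(y,s)\,dyds
\end{equation*}
on the closed ball $B_\delta:=\{u\in X_T:\ \|u\|_{X_T}\le\delta\}$. Here $\delta<1$ is the constant of Proposition \ref{prop3.2}, possibly shrunk further, and $T$ will be chosen small depending on $n,\kappa$. First, $X_T$ with its norm is a Banach space. Indeed, a Cauchy sequence converges uniformly together with its first derivatives, the scale-invariant $L^{n+4}$ bounds on $\nabla^2$ pass to the limit by weak compactness and lower semicontinuity, and the limit is a strong $W^{2,n+4}_{loc}$ function. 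So $B_\delta$ is a complete metric space.

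\textbf{Self-map.} For $u\in B_\delta$, we apply Proposition \ref{prop3.1} with $Q=\mathcal{Q}(u)$, then Proposition \ref{prop3.2} and Proposition \ref{prop3.3}:
\begin{equation*}
\|\Phi(u)\|_{X_T}\le C_3\sqrt{T}\,\|H_0\|_{L^\infty}+C_1\|\mathcal{Q}(u)\|_{Y_T}\le C_3 n\kappa\sqrt{T}+C_1C_2\delta^2 .
\end{equation*}
We fix $\delta\le \min\{\delta_0,\,1/(4C_1C_2)\}$, where $\delta_0$ denotes the smallness constant of Proposition \ref{prop3.2}. This makes $C_1C_2\delta^2\le\delta/4$. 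We then choose $T=T(n,\kappa)$ so small that $C_3n\kappa\sqrt{T}\le \delta/2$ and $T<r_\kappa^2$, so the heat kernel bounds of Section 3 apply. With these choices $\Phi(B_\delta)\subset B_\delta$.

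\textbf{Contraction.} For $u_1,u_2\in B_\delta$ the $H_0$ term cancels in the difference. Linearity of the kernel integral together with Propositions \ref{prop3.1} and \ref{prop3.2} then give
\begin{equation*}
\|\Phi(u_1)-\Phi(u_2)\|_{X_T}\le C_1C_2\big(\|u_1\|_{X_T}+\|u_2\|_{X_T}\big)\|u_1-u_2\|_{X_T}\le 2C_1C_2\delta\,\|u_1-u_2\|_{X_T}\le\tfrac12\|u_1-u_2\|_{X_T}.
\end{equation*}
By Banach's fixed point theorem there is a unique fixed point $u\in B_\delta$, which is exactly the integral equation in the statement. Moreover $u(\cdot,0)=0$, because $\big|\int_0^t\!\int_M K\,Q\big|\to0$ as $t\to0$; Step 1 of the proof of Proposition \ref{prop3.1} gives this bound with a factor $t^{\frac{n+6}{2(n+4)}}$, respectively $\sqrt t$. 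Uniqueness in the statement is uniqueness among functions with $\|u\|_{X_T}\le\delta$, which the contraction gives directly.

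\textbf{Main obstacle.} The delicate point is making sure the constants do not conspire. $C_1$ and $C_2$ must be independent of $T$ (for $T\le r_\kappa^2$), so that $\delta$ can be fixed first and $T$ shrunk afterwards without changing $\delta$. The bounds of Proposition \ref{prop3.1} were derived for $t<r_\kappa^2$ with constants depending only on $n,\kappa$, so this is consistent. One must also check that $\mathcal{Q}$ is well defined on $B_\delta$: Proposition \ref{prop1.4} requires $\|u\|_{C^{0,1}}<\delta_0$, and this holds because the $X_T$ norm dominates the $C^{0,1}$ norm at every time. A secondary point is regularity. The fixed point is a priori only a mild solution in $X_T$, and to conclude that it solves \eqref{eq1.4} classically one notes that $\mathcal{Q}(u)\in Y_T$ makes $u$ a strong $W^{2,n+4}$ solution. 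Standard parabolic bootstrapping in the harmonic coordinates of Corollary \ref{coro3.5} then upgrades it to a smooth solution for $t>0$. This part is not needed for the statement as formulated.
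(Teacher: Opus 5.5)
Your proposal is correct and is essentially the paper's proof. Both run a Banach fixed-point argument for $u\mapsto\int_{M\times[0,t]}K\,(-H_0+\mathcal{Q}(u))\,dyds$ on the $\delta$-ball of $X_T$: the contraction comes from Propositions \ref{prop3.1} and \ref{prop3.2}, the self-map bound $C_1C_2\delta^2+C_3\sqrt{T}\|H_0\|_{L^\infty}$ comes from Propositions \ref{prop3.1}--\ref{prop3.3}, and the choices are the same, namely $\delta=1/(4C_1C_2)$ and $T$ so small that $C_3\sqrt{T}\|H_0\|_{L^\infty}\le\delta/2$. The extra points you verify are details the paper leaves implicit rather than a different route: completeness of $X_T$, keeping $\delta$ below the smallness threshold of Proposition \ref{prop3.2}, preservation of $u(\cdot,0)=0$, and bounding $\|H_0\|_{L^\infty}$ by $n,\kappa$ so that $T=T(n,\kappa)$.
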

	
	\begin{proof}
		Denote $X^\delta_T=\{u\in X_T:u(\cdot,0)=0,\|u\|_{X_T}\leq \delta\}$. Let $\mathcal{G}:X^\delta\to X^\delta_T$ be a map defined by
		\begin{equation}
			\mathcal{G}(u):=\int_{M\times [0,t]}K(x,t;y,s)(-H_0+\mathcal{Q}(u))dyds.
		\end{equation}
		We next show that $ \mathcal{G}$ is a contraction mapping on $X^\delta_T$ for some $\delta>0$ small enough. Given any $h_1,h_2\in X_T$ such that $u_i(\cdot,0)=0, \|u_i\|_{X_T}<\delta$ for $i=1,2$, by Proposition \ref{prop3.1} and  Proposition \ref{prop3.2} , we have
		\begin{equation}
			\begin{split}
				&\| \mathcal{G}(u_1)-\mathcal{G}(u_2)\|_{X_T}\\
				&\quad=\bigg\|\int_{M\times [0,t]}K(x,t;y,s)(\mathcal{Q}(u_1)-\mathcal{Q}(u_2))dyds \bigg\|_{X_T}\\
				&\quad\leq C_1\bigg\|\mathcal{Q}(u_1)-\mathcal{Q}(u_2) \bigg\|_{Y_T}\\
				&\quad\leq C_1C_2 (\|u_1\|_{X_T}+\|u_2\|_{X_T})\|u_1-u_2\|_{X_T}\\
				&\quad\leq 2C_1C_2\delta \|u_1-u_2\|_{X_T}.
			\end{split}
		\end{equation} 
		On the other hand, by Proposition \ref{prop3.3}
		\begin{equation}
			\begin{split}
				\bigg\|\int_{M\times [0,t]}K(x,t;y,s)H_0dyds\bigg\|_{X_T}\leq C\sqrt{T}\|H_0\|_{X_T}
			\end{split}
		\end{equation}
		and by Proposition \ref{prop3.1} and Proposition \ref{prop3.2}
		\begin{equation}
			\begin{split}
				\| \mathcal{G}(u)\|_{X_T}&=\bigg\|\int_{M\times [0,t]}K(x,t;y,s)(\mathcal{Q}(u)+H_0)dyds \bigg\|_{X_T}\\
				&\leq \bigg\|\int_{M\times [0,t]}K(x,t;y,s)\mathcal{Q}(u)dyds \bigg\|_{X_T}\\
				&+\bigg\|\int_{M\times [0,t]}K(x,t;y,s)H_0dyds \bigg\|_{X_T}\\
				&\leq C_1C_2 \|u\|^2_{X_T}+C_3\sqrt{T}\|H_0\|_{L^\infty}.\\
			\end{split}
		\end{equation} 
		After taking $\delta=\frac{1}{4C_1C_2}$ and $\sqrt{T}=\mathrm{min}\{(8C_1C_2C_3\|H_0\|_{L^\infty})^{-1},i_0/2\}$, we have  
		\begin{equation}
			\begin{split}
				\| \mathcal{G}(u_1)-\mathcal{G}(u_2)\|_{X_T}\leq \frac{1}{2} \|u_1-u_2\|_{X_T}
			\end{split}
		\end{equation} 
		and 
		\begin{equation}
			\begin{split}
				\| \mathcal{G}(u)\|_{X_T}< \delta,\\
			\end{split}
		\end{equation} 
		which implies that $\mathcal{G}$ is a contraction map from  $X^\delta_T$ to itself. Finally, by the Banach fixed point theorem, there exists a unique solution $u\in X_T$ with $\|u\|_{X_T}\leq \delta$. 
	\end{proof}
	\begin{proof}[Proof of Theorem \ref{thm0.1}:] Theorem \ref{thm0.1} follows from Proposition \ref{prop1.3} and Theorem \ref{thm3.4}. 
	\end{proof}
	\section{Continuous dependence}\label{sec4}

	\begin{definition}\label{def4.1}
		Let $\{M_t\}_{t\in [0,T]}$ be a mean curvature flow. We define the function spaces
		\begin{equation}
			\begin{split}
				X_T=\bigg\{f|\|f\|_{X^T}=&\lim\limits_{0<t<T}\|f\|_{L^\infty(M_t)}+\lim\limits_{0<t<T}\|\nabla f\|_{L^\infty(M_t)}\\
				&+\lim\limits_{x\in M}\sup\limits_{0<r^2<T}r^{\frac{2}{n+4}}\|\nabla^2 f\|_{L^{n+4}(\Omega(x,r))}\bigg\}
			\end{split}
		\end{equation}
		and
		\begin{equation}
			\begin{split}
				Y_T=\bigg\{f|\|f\|_{Y_T}=\lim\limits_{x\in M}\sup\limits_{0<r^2<T}r^{\frac{2}{n+4}}\|f\|_{L^{n+4}(\Omega(x,r))}\bigg\}
			\end{split}
		\end{equation}
		where 
		\begin{equation}
			\begin{split}
				P(x,r):=\bigcup_{0<t<r^2}B_{g(t)}(x,r)\times \{t\},\mathrm{and}\quad \Omega(x,r):= \bigcup_{\frac{r^2}{2}<t<r^2}B_{g(t)}(x,r)\times  \{t\},
			\end{split}
		\end{equation}
		and
		\begin{equation}
			\|f\|^{n+4}_{L^{n+4}(\Omega(x,r))}=\int^{r^2}_{\frac{r^2}{2}}\int_{B_{g(t)}(x,r)}|f|^{n+4}d\mu_t.
		\end{equation}
	\end{definition}
	\begin{remark}
		Under the bounded curvature condition \eqref{eq2.14}, we know that  there exists a $C_0>0$ such that  
		\begin{equation}
			\frac{1}{C_0}g_t\leq g_0\leq C_0g_t.
		\end{equation}
		So, the $L^{n+4}$ norm defined in Definition \ref{def3.1} is equivalent to the $L^{n+4}$ norm defined in Definition \ref{def4.1}
		.
	\end{remark}
	\begin{proposition}\label{prop4.1}
		Let $\{M_t\}_{t\in [0,T]}$ be a mean curvature flow.  Assume that $M_0$ has bounded second fundamental form. Suppose that $T$ is small enough. There exists $C_4=C_4(n,\kappa,T)$ such that for any $Q\in Y_T$, we have
		\begin{equation}
			\bigg\|\int_M\int_0^t\tilde{K}(x,t;y,s)Q(y,s)dyds\bigg\|_{X_T}\leq C_4\|Q\|_{Y_T},
		\end{equation}
		where $\tilde{K}$ is the heat kernel of the operator $\tilde{\square}$.
	\end{proposition}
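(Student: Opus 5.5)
The plan is to repeat the proof of Proposition \ref{prop3.1} almost verbatim, with $K$ replaced by $\tilde K$ and the fixed metric replaced by the evolving metrics $g(t)$. Set
\[
g(x,t):=\int_0^t\int_{M}\tilde K(x,t;y,s)Q(y,s)\,d\mu_s(y)\,ds .
\]

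Before starting, I record three facts that let every static-metric argument transfer.

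\textbf{First:} by \eqref{eq2.14} and the remark after Definition \ref{def4.1}, $C_0^{-1}g_t\le g_0\le C_0g_t$. Hence $d_{g(t)}$ and $d_{g(s)}$ are comparable, the balls $B_{g(t)}(x,r)$ are comparable to $B_{g_0}(x,C_0^{\pm1}r)$, and the measures $d\mu_t$ are uniformly equivalent.

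\textbf{Second:} Lemma \ref{lm2.1} applies to every slice $M_t$ with $\kappa$ replaced by the constant of \eqref{eq2.14}. This gives the volume bounds $\mathrm{Vol}_{g(t)}(B_{g(t)}(x,r))\sim r^n$ and the covering arguments uniformly in $t$.

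\textbf{Third:} Proposition \ref{prop2.4} supplies the Gaussian bounds on $\tilde K,\nabla\tilde K,\nabla^2\tilde K$ in terms of $d_{g(t)}$. Its corollary supplies the off-diagonal bound $Ct^{-n/2}e^{-d^2_{g(t)}(x,y)/4Dt}$ outside $\Omega(x,t)$.

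\textbf{Steps 1 and 2} ($L^\infty$ bounds on $g$ and $\nabla g$). Split the integral into the part over $\Omega(x,t)$ and its complement.

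On $\Omega(x,t)$, apply H\"older with exponents $(n+4,\frac{n+4}{n+3})$. Computing $\|\tilde K\|_{L^{(n+4)/(n+3)}}$ and $\|\nabla\tilde K\|_{L^{(n+4)/(n+3)}}$ exactly as in \eqref{eq3.10} gives factors $t^{\frac{n+6}{2(n+4)}}$ and $t^{\frac{3}{2(n+4)}}$. Using the Gaussian integral $\int_M \tau^{-n/2}e^{-cd^2/\tau}d\mu\le C$ on each slice, these factors match the weight in $\|Q\|_{Y_T}$.

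On the complement, use the off-diagonal bound. Cover $M$ by balls $B(p_i,\sqrt t)$ with bounded overlap. Split $[0,t]$ dyadically and cover each $B(p_i,\sqrt t)\times(2^{-m-1}t,2^{-m}t)$ by $\sim2^{mn}$ cylinders of $\Omega$-type, as in \eqref{eq3.15}. Then sum the geometric series $\sum 2^{-m/2}$ and the Gaussian weights $\sum_i e^{-d^2(x,p_i)/4Dt}$. The only change from the static case is that each ball is measured in $g(s)$ rather than $g(t)$, which the first fact handles at the cost of constants depending on $C_0$.

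\textbf{Step 3} (scale-invariant $L^{n+4}$ bound on $\nabla^2g$). Here I would use directly the second-derivative bound on $\tilde K$ from Proposition \ref{prop2.4}; no passage to $\tilde G$ is needed, although I could also write $\partial_tg=\Delta_{g(t)}g+\tilde Q$ with $\tilde Q=|A|^2g+Q$ and use $\tilde G$ from Proposition \ref{prop3.8}.

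Fix $\Omega(x_0,r)$ and split the source into $\tilde Q\chi_{\Omega(x_0,r)}$ plus the rest.

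\emph{The rest.} The kernel is off-diagonal there, so the Step 1 argument gives a pointwise bound $Ct^{-1/2}\|\tilde Q\|_{Y_T}$. Its $r^{2/(n+4)}L^{n+4}(\Omega(x_0,r))$-norm is then bounded by $C\|\tilde Q\|_{Y_T}$. Steps 1–2 give $\||A|^2g\|_{Y_T}\le C\|Q\|_{Y_T}$.

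\emph{The local part.} Let $\tilde g$ solve $\partial_t\tilde g-\Delta_{g(t)}\tilde g=\tilde Q\chi$ with zero data at time $r^2/4$ on a slightly larger cylinder. Apply the Calder\'on--Zygmund estimate (Lemma \ref{Lp-estimate} with $p=n+4$) in the coordinates provided by the $\Lambda$-$W^{m+2,p}$ radius corollary.

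\textbf{Main obstacle.} That corollary gives a radius only $\ge C\sqrt t$, degenerating as $t\to0$. But $\Omega(x_0,r)$ lives at times $t\in(r^2/2,r^2)$, so a coordinate cylinder of size $\sim r$ is available there. The coefficients $g^{ij}(x,t)$ are continuous with controlled modulus, and the first-order terms from $\Gamma$ satisfy $r|b|\le\Lambda$, so the Calder\'on--Zygmund constant is uniform. The price is that $C_4$ depends on $T$ through the constants in Proposition \ref{propeh}, which is consistent with the statement.

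Combining the local and far contributions gives the $\nabla^2 g$ part of the $X_T$-norm. Together with Steps 1–2 this proves the proposition.
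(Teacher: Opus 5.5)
Your plan is the paper's plan (its proof of this proposition is only a pointer back to Proposition \ref{prop3.1}), and Steps 1--2 transfer to the evolving metric as you describe.

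\textbf{The gap.} The far part of Step 3 does not work as written. You cut the source off on exactly the cylinder $\Omega(x_0,r)$ on which you then measure $\nabla^2 g$. You then assert that on the remainder the kernel is off-diagonal, so that the Step 1 argument gives a pointwise bound $Ct^{-1/2}\|\tilde Q\|_{Y_T}$. In Step 1 that works only because the excised set $\Omega(x,t)$ is centred at the evaluation point itself.

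Here, take $(x,t)\in\Omega(x_0,r)$ at distance $\epsilon$ from $\partial B(x_0,r)$, or within $\epsilon^2$ of the bottom time $r^2/2$. The remainder then contains points $(y,s)$ at parabolic distance $\sim\epsilon$ from $(x,t)$, where $|\nabla^2\tilde K|$ is of size $\epsilon^{-n-2}$. Already for $|\tilde Q|\equiv 1$, the integral of $|\nabla^2\tilde K|\,|\tilde Q|$ over the remainder is of order $\log(r/\epsilon)$, while $\|\tilde Q\|_{Y_T}\le C\sqrt T$ stays bounded. So no pointwise bound of the claimed form holds uniformly on $\Omega(x_0,r)$. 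The paper's own Step 3 in Proposition \ref{prop3.1} uses the same split, so following it more closely would not help.

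\textbf{The fix.} Leave room between the near and far regions.
\begin{enumerate}
\item Take the local source to be $\tilde Q\chi_{\Omega^*}$ with $\Omega^*=B(x_0,2r)\times(r^2/4,r^2)$. Enlarge the radius by a factor depending on $C_0$ if balls are measured in $g(s)$.
\item For $(x,t)\in\Omega(x_0,r)$ and $(y,s)\notin\Omega^*$ with $s<t$, either $d(x,y)\ge r$ or $t-s\ge r^2/4$. Hence $|\nabla^2\tilde K(x,t;y,s)|\le Cr^{-n-2}e^{-d^2(x,y)/(8Dr^2)}$.
\item Your dyadic covering argument then gives the pointwise bound $Cr^{-1}\|\tilde Q\|_{Y_T}$ on $\Omega(x_0,r)$. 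Its weighted norm $r^{\frac{2}{n+4}}\|\cdot\|_{L^{n+4}(\Omega(x_0,r))}$ is at most $C\|\tilde Q\|_{Y_T}$.
\item For the local part $\tilde g$: $\Omega^*$ is covered by boundedly many cylinders $\Omega(y,\rho)$ with $\rho\in\{r/\sqrt2,r\}$, so $r^{\frac{2}{n+4}}\|\tilde Q\|_{L^{n+4}(\Omega^*)}\le C\|\tilde Q\|_{Y_T}$.
\item Step 1 applied to the cut-off source gives $\|\tilde g\|_{L^\infty}\le Cr\|\tilde Q\|_{Y_T}$.
\item The rescaled interior Calder\'on--Zygmund estimate from $\Omega^*$ down to $\Omega(x_0,r)$ then yields
\[
r^{\frac{2}{n+4}}\|\nabla^2\tilde g\|_{L^{n+4}(\Omega(x_0,r))}\le C\Big(r^{-1}\|\tilde g\|_{L^\infty(\Omega^*)}+r^{\frac{2}{n+4}}\|\tilde Q\|_{L^{n+4}(\Omega^*)}\Big)\le C\|\tilde Q\|_{Y_T}.
\]
\end{enumerate}
Since $\Omega^*$ only involves times $\ge r^2/4$, coordinate charts of size $\gtrsim r$ are still available there (after covering by boundedly many of them). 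So your discussion of the main obstacle is unaffected.

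\textbf{A minor slip.} You inherited this from the paper: in fact $\|\nabla\tilde K(x,t;\cdot,\cdot)\|_{L^{\frac{n+4}{n+3}}(\Omega(x,t))}\le Ct^{\frac{1}{n+4}}$, not $Ct^{\frac{3}{2(n+4)}}$. The correct factor exactly cancels the $t^{-\frac{1}{n+4}}$ coming from $\|Q\|_{Y_T}$, so Step 2 still closes.
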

	\begin{proof}
		
		See Proposition \ref{prop3.1} for detail.
	\end{proof}
	
	\begin{proposition}\label{prop4.2}
		Let $\{M_t\}_{t\in [0,T]}$ be a mean curvature flow.  Assume that $M_0$ has bounded second fundamental form. Suppose that $T$ is small enough. There exists $C_5=C_5(n,\kappa,T)$ and $\delta=\delta(n,\kappa,T)$ such that for any
		$u,u_1,u_2\in X_T^\delta:=\{u\in X_T|\|u\|_{X^T}<\delta\}$, we have the estimates
		\begin{equation}
			\|\mathcal{Q}_t(u)\|_{Y_T}\leq C_5\|u\|^2_{X_T}
		\end{equation}
		and 
		\begin{equation}
			\|\mathcal{Q}_t(u_1)-\mathcal{Q}_t(u_2)\|_{Y_T}\leq C_5(\|u_1\|_{X_T}+\|u_2\|_{X_T})\|u_1-u_2\|_{X_T},
		\end{equation}
		where $\mathcal{Q}_t$ is defined in Proposition \ref{prop1.5}.
		
	\end{proposition}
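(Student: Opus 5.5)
The plan is to repeat the argument of Proposition \ref{prop3.2}, now on the evolving hypersurfaces $M_t$. The input is the pointwise structure estimates \eqref{eq1.9} and \eqref{eq1.10} for $\mathcal{Q}_t$ (Proposition of \cite{SX} following Proposition \ref{prop1.5}). The constants there depend on $n$ and on bounds for $|A_{M_t}|$ and finitely many derivatives. By \eqref{eq2.14} $|A|$ is uniformly bounded on $[0,T]$. The derivatives of $A$ blow up only like $t^{-m/2}$ (Proposition \ref{propeh}). So the first step is to check from the structure of $\mathcal{Q}_t$ (it is built from $w,v$ of Lemma \ref{lm1.1}, evaluated at $(p,u,\nabla u)$ on $M_t$) which geometric quantities enter the constant $C$.

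\textbf{Choice of $\delta$.} Choose $\delta$ smaller than the $\delta$ of those estimates, so that $\|u\|_{X_T}<\delta$ gives $\sup_t\|u(\cdot,t)\|_{C^{0,1}(M_t)}<\delta$ and the estimates apply on every time slice.

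\textbf{First estimate.} On each slice, \eqref{eq1.9} gives
\[
|\mathcal{Q}_t(u)|\le C\|u\|_{C^{0,1}}|\nabla^2u|+C\|u\|_{C^{0,1}}^2 .
\]
Take the $L^{n+4}(\Omega(x,r))$ norm with respect to $d\mu_t$ over $\Omega(x,r)=\bigcup_{r^2/2<t<r^2}B_{g(t)}(x,r)\times\{t\}$. We need
\[
\mathrm{Vol}(\Omega(x,r))\le Cr^{n+2},
\]
which follows from Lemma \ref{lm2.1} applied to each $M_t$, with uniform $|A|$ and $r<i_0$; the remark comparing $g_t$ and $g_0$ can be used as well. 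Then
\[
r^{\frac{2}{n+4}}\|1\|_{L^{n+4}(\Omega(x,r))}\le Cr^{\frac{2}{n+4}+\frac{n+2}{n+4}}=Cr\le C\sqrt T\le C .
\]
Multiplying by $r^{2/(n+4)}$ and taking the supremum over $x\in M$ and $r^2<T$ gives $\|\mathcal{Q}_t(u)\|_{Y_T}\le C_5\|u\|_{X_T}^2$.

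\textbf{Difference estimate.} Rewrite \eqref{eq1.10} as
\[
|\mathcal{Q}_t(u_1)-\mathcal{Q}_t(u_2)|\le C\big(\|u_1\|_{C^{0,1}}+\|u_2\|_{C^{0,1}}\big)\big(\|u_1-u_2\|_{C^{0,1}}+|\nabla^2(u_1-u_2)|\big)+C\|u_1-u_2\|_{C^{0,1}}|\nabla^2u_1| .
\]
Integrating the same way, each term is bounded by a product of a $\sup_t C^{0,1}$ norm and either a scaled $L^{n+4}$ norm of a Hessian or the volume factor above. This gives $C_5(\|u_1\|_{X_T}+\|u_2\|_{X_T})\|u_1-u_2\|_{X_T}$. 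The last term uses $\|u_1-u_2\|_{C^{0,1}}\,\|\nabla^2u_1\|$, which is bounded by $\|u_1\|_{X_T}\|u_1-u_2\|_{X_T}$ as required.

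\textbf{Main obstacle.} The key difficulty is making sure the constant $C$ in \eqref{eq1.9} and \eqref{eq1.10} is uniform in $t\in(0,T]$. If the constant involves only $|A_{M_t}|$ (and possibly $|\nabla A|$ through the term $\partial_{p_j}\partial_s v=H_j$), this holds by \eqref{eq2.14}.

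If $|\nabla A|$ does appear, multiplied by $|u|$ or $|\nabla u|$, the plan is to pair the resulting $C|\nabla A|\,|u|^2$-type term with a factor bounded by $t^{-1/2}\le Cr^{-1}$ on $\Omega(x,r)$, using Proposition \ref{propeh}. This loses one power of $r$, which is exactly compensated by the factor $r$ obtained above in $r^{2/(n+4)}\|1\|_{L^{n+4}}$. This is why $C_5$ and $\delta$ are allowed to depend on $T$.
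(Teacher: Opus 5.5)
Your proposal is correct and follows the paper's route. The paper's proof is just a pointer to Proposition \ref{prop3.2}: apply \eqref{eq1.9}--\eqref{eq1.10} on each slice, pull out $\sup_t\|u\|_{C^{0,1}}$, use H\"older on $\Omega(x,r)$, and use $r^{\frac{2}{n+4}}\|1\|_{L^{n+4}(\Omega(x,r))}\le Cr$. Your extra check on the constant covers a point the paper takes for granted, and you handle it correctly. A possible $|\nabla A|$-dependence (it can enter through $\partial_{p_j}\partial_s v$, but only multiplying terms at least quadratic in $(u,\nabla u)$, never $\nabla^2 u$) costs at most $Ct^{-1/2}\le Cr^{-1}$ on $\Omega(x,r)$, and the factor $r$ absorbs it.
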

	
	\begin{proof}
		See Proposition \ref{prop3.2} for detail.
	\end{proof}
	\begin{proposition}\label{prop4.3}
		Let $\{M_t\}_{t\in [0,T]}$ be a mean curvature flow.  Assume that $M_0$ has bounded second fundamental form. Suppose that $T$ is small enough. There exists $C_6=C_6(n,\kappa,T)$  such that for any $f_0$ with $\|f_0\|_{C^{0,1}}<\infty$ we have
		\begin{equation}
			\bigg\|\int_{M_t}\tilde{K}(x,t;y,0)f_0dy\bigg\|_{X_T}\leq C_6\|f_0\|_{C^{0,1}}.
		\end{equation}
		
	\end{proposition}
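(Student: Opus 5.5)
Write $h(x,t):=\int_{M_t}\tilde{K}(x,t;y,0)f_0(y)\,dy$, the solution of $\partial_t h=\Delta_{x,t}h+|A|^2h$ with $h(\cdot,0)=f_0$. The plan is to bound the three parts of the $X_T$ norm separately. The $L^\infty$ and gradient bounds come from the maximum principle and a Bochner-type argument. The weighted $L^{n+4}$ bound on $\nabla^2 h$ comes from a Duhamel reformulation that feeds into Proposition \ref{prop4.1}.

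\textbf{Step 1 ($L^\infty$ bound).} Since $|A|^2\le C$ on $[0,T]$ by \eqref{eq2.14}, the function $e^{-Ct}h$ is a subsolution (and $-e^{-Ct}h$ likewise) of the heat equation along the flow. The maximum principle on the complete manifold is justified by the Gaussian bound of Proposition \ref{prop2.4}. It gives $\|h(\cdot,t)\|_{L^\infty}\le e^{CT}\|f_0\|_{L^\infty}$. Alternatively, one can integrate the kernel bound directly: $\int\tilde K(x,t;y,0)\,dy\le C$ by volume comparison (Lemma \ref{lm2.1}).

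\textbf{Step 2 (gradient bound).} I would not use $|\nabla_x\tilde K|\le Ct^{-(n+1)/2}e^{-d^2/4Dt}$ directly, since it only gives $\|\nabla h\|\le Ct^{-1/2}\|f_0\|_\infty$. Instead I compute the evolution of $|\nabla h|^2$ along MCF. Using $\partial_t g=-2HA$ and Gauss--Codazzi, one gets
\[
(\partial_t-\Delta)|\nabla h|^2\le -2|\nabla^2h|^2+C(|A|^2+|\nabla A|)\bigl(|\nabla h|^2+|h|^2\bigr).
\]
The term $|\nabla A|$ is only bounded by $Ct^{-1/2}$ (Proposition \ref{propeh}). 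It is handled by Young's inequality, $|\nabla A||h||\nabla h|\le Ct^{-1/2}(|h|^2+|\nabla h|^2)$, and $t^{-1/2}$ is integrable in time. I then apply the maximum principle to $e^{-C\sqrt t}|\nabla h|^2-C'\|f_0\|^2_{L^\infty}$. With Step 1 this yields $\|\nabla h(\cdot,t)\|_{L^\infty}\le C\|f_0\|_{C^{0,1}}$.

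\textbf{Step 3 (second derivatives).} This is the main obstacle, because $\nabla^2 h$ is genuinely singular as $t\to0$ for Lipschitz data. Interior Calder\'on--Zygmund alone gives $\|\nabla^2h(t)\|\lesssim t^{-1/2}\|f_0\|_{C^{0,1}}$ on $\Omega(x,r)$. The resulting contribution is $r^{2/(n+4)}\|t^{-1/2}\|_{L^{n+4}(\Omega(x,r))}\sim r^{2/(n+4)}r^{-1}r^{(n+2)/(n+4)}=O(1)$, exactly the scale-invariant borderline.

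So I would argue scale by scale. On $\Omega(x,r)$, subtract the constant $h(x,r^2/2)$, which leaves $\nabla^2h$ unchanged. By Step 2, the oscillation of $h$ on $B_{2r}(x)\times[r^2/4,r^2]$ is at most $Cr\|f_0\|_{C^{0,1}}$, using also $|\partial_t h|$ control via the equation or Lipschitz-in-time bounds. Then apply Lemma \ref{Lp-estimate} in the coordinates of the $\Lambda$-$W^{2,p}$ radius, which is $\ge C\sqrt t\sim r$ by the corollary following Definition \ref{def3.8}. Rescaled to unit size, this gives
\[
r^{2}\|\nabla^2 h\|_{L^{n+4}(\Omega(x,r))}\le C\,r^{\frac{n+2}{n+4}}\bigl(r\|f_0\|_{C^{0,1}}+r^2\||A|^2h\|_{L^\infty}\bigr).
\]
Multiplying by $r^{2/(n+4)-2}$ then yields $r^{2/(n+4)}\|\nabla^2h\|_{L^{n+4}(\Omega(x,r))}\le C\|f_0\|_{C^{0,1}}$ uniformly in $x$ and $r^2<T$.

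The delicate points are the time oscillation of $h$ and the uniformity of the coordinates. If controlling the time oscillation fails, I would fall back on writing $h=\tilde G$-evolution of $f_0$ plus a Duhamel term $\int\tilde G\,|A|^2h$. The Duhamel term is handled by Proposition \ref{prop4.1}, since $|A|^2h\in Y_T$ with norm $\le C\sqrt T\|f_0\|_\infty$. Combining the three steps gives the claim with $C_6=C_6(n,\kappa,T)$.
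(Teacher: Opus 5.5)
Your proof is correct in outline but is organized differently from the paper's.

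\textbf{The paper's route.} It first splits $h=\tilde f+\int_0^t\!\int\tilde G\,|A|^2h$ with $\tilde f=\int\tilde G(x,t;y,0)f_0\,dy$. The Duhamel term is handled by the estimate of Proposition~\ref{prop4.1} together with $\||A|^2h\|_{Y_T}\le C\|f_0\|_{L^\infty}$. After that the paper works only with the pure heat flow $\tilde f$. For the gradient, Bochner and Kato give $(\partial_t-\Delta)|\nabla\tilde f|\le C\kappa^2|\nabla\tilde f|$, with no $\nabla A$ term. For second derivatives, the paper applies Calder\'on--Zygmund to the \emph{differentiated} equation for $\tilde f_i=\partial_i\tilde f$ in $W^{2,n+4}$ coordinates. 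It reads $r^{2/(n+4)}\|\partial\partial\tilde f\|_{L^{n+4}}$ off as the first-order part of the scaled $W^{2,n+4}$ norm of $\tilde f_i$, which is controlled by $\|\partial\tilde f\|_{L^\infty}$ alone.

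\textbf{Your route.} You keep the potential and pay for it with the Ecker--Huisken bound $|\nabla A|\le Ct^{-1/2}$. This is harmless because $t^{-1/2}$ is integrable in time. You then apply Calder\'on--Zygmund to the undifferentiated equation for $h-c$.

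\textbf{What each buys.} Your argument is more self-contained: it needs neither Proposition~\ref{prop4.1} nor the coordinate form of the system for $\partial_i\tilde f$, whose lower-order terms involve $\partial g$ and $\partial^2 g$. The paper's argument has the advantage that only the spatial gradient bound enters, so time oscillation never has to be controlled.

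\textbf{The step you leave unproved.} That time oscillation is the one step you do not prove. Your fallback would not help, because the $\tilde G$-evolution of $f_0$ has exactly the same issue. The estimate is easy, though, and needs no kernel derivatives. For $s<t$ write
$h(x,t)-h(x,s)=\int\tilde K(x,t;y,s)\bigl(h(y,s)-h(x,s)\bigr)d\mu_s(y)+h(x,s)\bigl(\Phi(x,t)-1\bigr)$, where $\Phi(x,t)=\int\tilde K(x,t;y,s)\,d\mu_s(y)$.
By Step 2, the Gaussian bound and volume comparison, the first term is at most $C\sqrt{t-s}\,\|f_0\|_{C^{0,1}}$. Since $\Phi$ solves the equation with $\Phi(\cdot,s)=1$, $\tilde K>0$ and $0\le|A|^2\le C$, comparison gives $1\le\Phi\le e^{C(t-s)}$. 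So the second term is at most $C(t-s)\|f_0\|_{L^\infty}$.

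\textbf{A small slip in Step 2.} Subtracting a constant does not absorb the source term $Ct^{-1/2}|h|^2$. Use instead $e^{-C\sqrt t}\bigl(|\nabla h|^2+C'\|f_0\|_{L^\infty}^2\bigr)$, with $C'\|f_0\|_{L^\infty}^2\ge\sup|h|^2$ and $C$ large. This is a subsolution for $t\le 1$.
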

	
	\begin{proof}
		Define
		\begin{equation}
			f(x,t)=\int_{M}\tilde{K}(x,t;y,0)f_0dy.
		\end{equation}
		By Proposition \ref{prop2.4}, 
		\begin{equation}\label{eq4.5}
			\begin{split}
				|f(x,t)|\leq \|f_0\|_{L^\infty}\int_{M}\tilde{K}(x,t;y,0)dy\leq C\|f_0\|_{C^{0,1}}.
			\end{split}
		\end{equation}
		On the other hands, $f$ can be viewed as a solution of the non-homogeneous heat equation
		\begin{equation}
			\begin{split}
				f_t-\Delta_{M_t}f=|A|^{2}(x,t)f
			\end{split}
		\end{equation}
		with initial data $f(\cdot,0)=f_0$. So, we have 
		\begin{equation}
			\begin{split}
				f(x,t)=\int_0^t\int_{M_s}\tilde{G}(x,t;y,s)|A|^{2}(y,s)f(y,s)dyds+\int_{M_t}\tilde{G}(x,t;y,0)f_0(y)dy
			\end{split}
		\end{equation}
		and
		\begin{equation}\label{eq4.13}
			\begin{split}
				\|f\|_{X_T}&\leq \bigg\|\int_0^t\int_{M_s}\tilde{G}(x,t;y,s)|A|^{2}fdyds\bigg\|_{X_T}\\
				&+\bigg\|\int_{M_t}\tilde{G}(x,t;y,0)f_0dy\bigg\|_{X_T}
			\end{split}
		\end{equation}
		Similar to Proposition \ref{prop4.1},
		\begin{equation}\label{eq4.14}
			\begin{split}
				&\bigg\|\int_0^t\int_{M_s}\tilde{G}(x,t;y,s)|A|^{2}fdyds\bigg\|_{X_T}\\
				&\quad\leq C\||A|^{2}f\|_{Y_T}\\
				&\quad\leq Ck^2_0\sup\limits_{x\in M}\sup\limits_{0<r^2<T}r^{\frac{2}{n+4}}\|f\|_{L^{n+4}(\Omega(x,r))}\\
				&\quad\leq C\|f_0\|_{L^\infty}.
			\end{split}
		\end{equation}
		where in the last equation, we use \eqref{eq4.5}. So it remains to prove
		\begin{equation}\label{eq4.15}
			\begin{split}
				\bigg\|\int_{M_t}\tilde{G}(x,t;y,0)f_0dy\bigg\|_{X_T}\leq C\|f_0\|_{C^{0,1}}.
			\end{split}
		\end{equation}
		
		Denote
		\begin{equation}
			\tilde{f}=\int_{M_t}\tilde{G}(x,t;y,0)f_0dy
		\end{equation}
		We have
		\begin{equation}\label{eq5.18}
			\partial_t\tilde{f}-\Delta_{x,t}\tilde{f}=0.
		\end{equation}
		
		\textbf{ Step 1. Estimate $\|\tilde{f}\|_{L^\infty}$:}
		\begin{equation}
			|\tilde{f}(x,t)|=\bigg|\int_{M_t}\tilde{G}(x,t;y,0)f_0dy\bigg|\leq \|f_0\|_{L^\infty}\int_{M_t}\tilde{G}(x,t;y,0)dy \leq C\|f_0\|_{L^\infty}.
		\end{equation}
		
		\textbf{ Step 2. Estimate $\|\nabla\tilde{f}\|_{L^\infty}$:}
		
		We consider the evolving equation of $\nabla \tilde{f}$. Taking derivative of \eqref{eq5.18},
		\begin{equation}\label{eq5.20}
			\nabla\partial_t\tilde{f}-\nabla\Delta_{x,t}\tilde{f}=0.
		\end{equation}
		Noting that along the mean curvature flow
		\begin{equation}\label{eq5.21}
			\nabla (\partial_t \tilde{f}) - \partial_t (\nabla \tilde{f}) = H h_{ik} g^{kj} \nabla_j \tilde{f}\tau_i
		\end{equation}
		where $\{\tau_i\}_{i=1}^n$ is an orthogonal basis of $T_xM$.
		By Bochner's formula, we have
		\begin{equation}\label{eq5.22}
			\nabla \Delta \tilde{f} - \Delta \nabla \tilde{f} = \text{Ric}(\tilde{f}, \cdot)^\sharp.
		\end{equation}
		Plugging \eqref{eq5.21} and \eqref{eq5.22} into \eqref{eq5.20}, we obtain 
		\begin{equation}
			\begin{split}
				\partial_t (\nabla \tilde{f}) - \Delta \nabla \tilde{f}=\text{Ric}(\nabla \tilde{f}, \cdot)^\sharp-H h_{ik} g^{kj} \nabla_j \tilde{f}\tau_i.
			\end{split}
		\end{equation}
		By direct calculation and Kato's inequality, we arrive at 
		\begin{equation}
			\partial_t |\nabla \tilde{f}| - \Delta |\nabla \tilde{f}|\leq C\kappa^2|\nabla \tilde{f}|
		\end{equation}
		in distribution sense.
		At $t=0$, $|\nabla\tilde{f}(\cdot,0)|=|\nabla f_0|\leq C\|f_0\|_{C^{0,1}}$. By Moser's Iteration
		\begin{equation}
			|\nabla \tilde{f}(x,t)|\leq e^{C\kappa^2 t}\int_M\tilde{G}(x,t;y,0)|\nabla f_0|\leq C\|f_0\|_{C^{0,1}}
		\end{equation}
		for any $0<t\leq 1$.

		\textbf{ Step 3. Estimate $\|\nabla^2\tilde{f}\|_{L^{n+4}}$:}
		
		We write the evolving equation of $\nabla \tilde{f}$ in $W^{2,n+4}$ coordinate,
		\begin{equation}
			\frac{\partial \tilde{f}_i}{\partial t} - g^{jk} \partial_j \partial_k \tilde{f}_i = R_{i}^{\;p} \tilde{f}_p - H h_{i}^{\;p} \tilde{f}_p.
		\end{equation}
		where $\tilde{f}_i=\partial_i \tilde{f}$. By Calder\'on-Zygmund inequality, we have
		\begin{equation}
			r^{\frac{2}{n+4}} \|\partial\partial \tilde{f}\|_{L^{n+4}(\Omega(x,t))}\leq C\|\partial \tilde{f}\|_{L^\infty}.
		\end{equation}
		On the other hand, 
		\begin{equation}
			\nabla \tilde{f}=\partial_k\tilde{f} dx^k\quad \text{and}\quad \nabla^2 \tilde{f}=\bigg(\partial_i\partial_j\tilde{f}-\Gamma_{ij}^k\partial_k\tilde{f}\bigg)dx^i\otimes dx^j.
		\end{equation}
		So, we have
		\begin{equation}
			r^{\frac{2}{n+4}} \|\nabla^2\tilde{f}\|_{L^{n+4}(\Omega(x,t))}\leq C\|\partial \tilde{f}\|_{L^\infty}\leq C\|f_0\|_{C^{0,1}}.
		\end{equation}
	\end{proof}

	\begin{theorem}\label{thm4.4}
		Let $(M_t)_{t\in [0,T]}$ be a mean curvature flow. Suppose that $M_0$ has bounded second fundamental form, i.e., $\|A\|_{M_0}\leq \kappa$. There exist $T'=T'(n,\kappa,T)$,  $\varepsilon=\varepsilon(n,\kappa,T)$ and $C_7=C_7(n,\kappa,T)$  with following property.  For any  function $u_0$ such that $\|u_0\|_{C^{0,1}(M_0)}\leq \varepsilon$,  there exists a solution $u(\cdot,t)\in X_{T'}$ which resolve \eqref{eq1.8}:
		\begin{equation}
			\begin{split}
				u(x,t)&:=\int_M\Tilde{K}(x,t;y,0)u_0dy\\
				&\quad+\int_{M\times [0,t]}\tilde{K}(x,t;y,s)\mathcal{Q}(u)(y,s)dyds.
			\end{split}
		\end{equation}
		Furthermore,  for any $k\in \IN_0$ and every multiindex $\alpha\in \IN_0^n$ we have the estimates
		\begin{equation}\label{eq4.28}
			\sup\limits_{x\in M}\sup\limits_{t>0}|(t^{\frac{1}{2}}\nabla)^\alpha(t\partial_t)^k\nabla u|\leq C\|u_0\|_{C^{0,1}}.
		\end{equation}
	\end{theorem}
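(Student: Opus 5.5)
The plan is to run the same contraction-mapping argument as in Theorem \ref{thm3.4}, now over the evolving background flow $\{M_t\}$, and then derive \eqref{eq4.28} by a scaling and bootstrap argument.

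\textbf{Existence.} On $X^\delta_{T'}:=\{u\in X_{T'}:\|u\|_{X_{T'}}\leq \delta\}$ define
\begin{equation*}
\mathcal{G}(u)(x,t):=\int_M\tilde{K}(x,t;y,0)u_0(y)\,dy+\int_{M\times[0,t]}\tilde{K}(x,t;y,s)\mathcal{Q}_s(u)(y,s)\,dy\,ds .
\end{equation*}
Proposition \ref{prop4.3} bounds the first term in $X_{T'}$ by $C_6\|u_0\|_{C^{0,1}}\le C_6\varepsilon$. Propositions \ref{prop4.1} and \ref{prop4.2} bound the second term by $C_4C_5\|u\|_{X_{T'}}^2$. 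They also give the difference estimate
\begin{equation*}
\|\mathcal{G}(u_1)-\mathcal{G}(u_2)\|_{X_{T'}}\le C_4C_5(\|u_1\|_{X_{T'}}+\|u_2\|_{X_{T'}})\|u_1-u_2\|_{X_{T'}}.
\end{equation*}
Choose $\delta=\min\{\delta_{\ref{prop4.2}},(4C_4C_5)^{-1}\}$ and $\varepsilon=\delta/(2C_6)$. Then $\mathcal{G}$ maps $X^\delta_{T'}$ into itself and is a $\tfrac12$-contraction. Unlike Theorem \ref{thm3.4}, there is no $H_0$ forcing term, because $\{M_t\}$ already solves MCF. Hence $T'$ need not be small relative to the data; it only has to lie in the range where pseudolocality \eqref{eq2.14} and the heat kernel bounds of Proposition \ref{prop2.4} hold, say $T'\le\min\{T,1,r_\kappa^2\}$. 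Banach's fixed point theorem then gives $u$. Since the fixed point satisfies $\|u\|_{X_{T'}}\le 2C_6\|u_0\|_{C^{0,1}}$, we get the $k=0,\alpha=0$ case of \eqref{eq4.28}. By Proposition \ref{prop1.5}, $u$ solves \eqref{eq1.8} in the Duhamel (mild) sense, and it is a strong $W^{2,n+4}_{loc}$ solution by the $\nabla^2$ part of the $X_{T'}$ norm.

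\textbf{Higher derivative estimates.} Fix $(x_0,t_0)$ with $t_0\le T'$ and rescale parabolically by $\lambda=t_0^{-1/2}$:
\begin{equation*}
u_\lambda(x,t)=\lambda u(x,t_0+\lambda^{-2}(t-1)), \qquad M^\lambda_t=\lambda M_{t_0+\lambda^{-2}(t-1)},\qquad t\in[1/2,1].
\end{equation*}
By Proposition \ref{propeh}, the rescaled background has $|\nabla^mA|\le C_m$ on $B_1(x_0)\times[1/2,1]$ uniformly in $t_0$. The rescaled graph function solves the same type of quasilinear equation \eqref{eq1.8}. Its $C^{0,1}$ norm is $\le C\|u_0\|_{C^{0,1}}$ after rescaling; the $L^\infty$ part even improves, by a factor $\lambda^{-1}\cdot\lambda$ after accounting for $\nabla$. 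So it suffices to prove interior estimates
\begin{equation*}
|\nabla^\alpha\partial_t^k\nabla u_\lambda|(x_0,1)\le C(\|u_\lambda\|_{L^\infty}+\|\nabla u_\lambda\|_{L^\infty})
\end{equation*}
on a unit parabolic cylinder. To prove these, write \eqref{eq1.8} in the time-adapted coordinates of Definition \ref{def3.8}, whose radius is bounded below by $C_m\sqrt{t}$ (this becomes a unit-size radius after rescaling). The equation reads $\partial_tu-a^{ij}(x,t,u,\partial u)\partial_{ij}u=b(x,t,u,\partial u)$, with $a^{ij}$ close to $g^{ij}$ since $\|u\|_{C^{0,1}}$ is small. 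The $X$ bound gives $\nabla^2u\in L^{n+4}$. Lemma \ref{sob} then yields $C^{1+\alpha,(1+\alpha)/2}$, so the coefficients become Hölder, and parabolic Schauder theory gives $C^{2+\alpha}$. Differentiating the equation and iterating Schauder gives all higher $\nabla^\alpha\partial_t^k$ bounds. Because the right-hand side vanishes when $u\equiv 0$, with $|b|\le C(|u|+|\nabla u|)$, the estimates are linear in $\|u\|_{C^{0,1}}$. Scaling back produces exactly the weights $(t^{1/2}\nabla)^\alpha(t\partial_t)^k$ in \eqref{eq4.28}.

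\textbf{Main obstacle.} The delicate point is getting linear rather than merely bounded dependence on $\|u_0\|_{C^{0,1}}$ in the bootstrap, together with uniformity of the coordinate charts across time slices. For the former, I would treat the nonlinear equation as a linear equation for $u$ with coefficients frozen at the already-controlled $u$: write $\mathcal{Q}_t(u)=\tilde a^{ij}\partial_{ij}u+\tilde b\cdot\partial u+\tilde c u$, with $\tilde a,\tilde b,\tilde c$ controlled in $C^{k+\alpha}$ at each stage. Linear Schauder estimates are then homogeneous in $u$. For the latter, I would rely on the corollary following Definition \ref{def3.8} and on the metric equivalence remark, which make the rescaled metrics uniformly $W^{m+2,p}$ in these charts.
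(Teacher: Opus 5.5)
Your existence argument is the paper's: the same map $\mathcal{G}$, Propositions \ref{prop4.1}--\ref{prop4.3} used as black boxes, and the same choice $\delta\sim(4C_4C_5)^{-1}$, $\varepsilon\sim\delta/C_6$. Your bookkeeping, which keeps the factor $C_4$ on the quadratic term, is if anything cleaner than the paper's. For \eqref{eq4.28} the paper says only that it follows from standard parabolic estimates, so your rescaling/Schauder sketch is extra detail. As written, however, it breaks at exactly the step that decides uniformity as $t\to 0$.

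With $\lambda=t_0^{-1/2}$ the derivative weights come out right, $\nabla^\alpha\partial_t^k\nabla u_\lambda=t_0^{|\alpha|/2+k}\nabla^\alpha\partial_t^k\nabla u$. But $\|u_\lambda\|_{L^\infty}=\lambda\|u\|_{L^\infty}=t_0^{-1/2}\|u\|_{L^\infty}$ gets worse under rescaling, not better, since $t_0\le 1$.

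\textbf{The gap.} The interior estimate you reduce to has $\|u_\lambda\|_{L^\infty}+\|\nabla u_\lambda\|_{L^\infty}$ on the right. Scaled back, it only gives $|(t^{1/2}\nabla)^\alpha(t\partial_t)^k\nabla u|\le C(t^{-1/2}\|u\|_{L^\infty}+\|\nabla u\|_{L^\infty})$, which is not uniform in $t>0$. The same defect sits in your bound $|b|\le C(|u|+|\nabla u|)$. It also sits in the frozen-coefficient linear Schauder estimate, whose right-hand side is $\|u_\lambda\|_{L^\infty(Q_1)}$; homogeneity in $u$ alone does not cure this.

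\textbf{The missing observation.} Undifferentiated $u$ enters the graph equation only through products with the background geometry. For example, the induced metric is $g_{ij}\pm 2uh_{ij}+u^2h_{ik}h_{kj}+\nabla_iu\nabla_ju$. After rescaling, $u_\lambda\nabla^mA_\lambda=t_0^{m/2}\,u\nabla^mA$, and this is $O(\|u\|_{L^\infty})$ on $[t_0/2,t_0]$ by Proposition \ref{propeh}. In particular, the rescaled zeroth-order coefficient (e.g.\ $|A_\lambda|^2=t_0|A|^2$) is $O(t_0)$.

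\textbf{The repair.} Run the Schauder bootstrap in one of two ways. One option is to use the differentiated equation for $\nabla u_\lambda$, as in Step 3 of Proposition \ref{prop4.3}; there $u_\lambda$ appears only in such products. The other is to work with $u_\lambda-u_\lambda(x_0,1)$. Its oscillation over $Q_1$ is controlled by $\|\nabla u_\lambda\|_{L^\infty}$ together with the scale-invariant $L^{n+4}$ bounds on $\nabla^2u_\lambda$ and $\partial_tu_\lambda$ that come from the $X_{T'}$ norm. The subtracted constant only produces a source of size $O(t_0)\cdot t_0^{-1/2}\|u\|_{L^\infty}$. With this correction the bootstrap does yield \eqref{eq4.28}, linearly in $\|u_0\|_{C^{0,1}}$.
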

	
	\begin{proof}
		We take $T'$ sufficiently small so that for Proposition \ref{prop4.1}, \ref{prop4.2} and \ref{prop4.3} hold.
		Denote $X^\delta_{T'}:=\{u\in X_{T'}:\|u\|_{X_{T'}}\leq \delta,u(\cdot,0)=u_0\}$.	Let $\mathcal{G}:X^\delta_{T'}\to X^\delta_{T'}$ be a map defined by
		\begin{equation}
			\begin{split}
				\mathcal{G}(u)&:=\int_M\tilde{K}(x,t;y,0)u_0dy\\
				&+\int_{M\times [0,t]}\tilde{K}(x,t;y,s)\mathcal{Q}(u)(y,s)dyds.
			\end{split}
		\end{equation}
		For any $u_1,u_2\in X^\delta_{T'}$, by Proposition \ref{prop4.1}, we have 
		\begin{equation}
			\begin{split}
				\|\mathcal{G}(u_1)-\mathcal{G}(u_2)\|_{X_{T'}}&=\bigg\|\int_{M\times [0,t]}\tilde{K}(x,t;y,s)(\mathcal{Q}(u_1)-\mathcal{Q}(u_2))dyds\bigg\|_{X_{T'}}\\
				&\leq C_4\|\mathcal{Q}(u_1)-\mathcal{Q}(u_2)\|_{Y_{T'}}.
			\end{split}
		\end{equation}
		Using Proposition \ref{prop4.2}, by taking $\varepsilon$ small enough, we have
		\begin{equation}\label{c2-1}
			\begin{split}
				\|\mathcal{G}(u_1)-\mathcal{G}(u_2)\|_{X_{T'}}&\leq C_4C_5(\|u_1\|_{X_{T'}}+\|u_2\|_{X_{T'}})\|u_1-u_2\|_{X_{T'}}\\
				&\leq 2C_4C_5\delta\|u_1-u_2\|_{X_{T'}}.
			\end{split}
		\end{equation}
		On the other hand, by Proposition \ref{prop4.3}, we obtain 
		\begin{equation}
			\begin{split}
				\|\mathcal{G}(u)\|_{X_{T'}}&\leq \bigg\|\int_{M}\tilde{K}(x,t;y,0)u_0dy\bigg\|_{X_{T'}}\\
				&+\bigg\|\int_{M\times [0,t]}\tilde{K}(x,t;y,s)\mathcal{Q}(u)dyds\bigg\|_{X_{T'}}\\
				&\leq C_5\|u\|^2_{X_{T'}}+C_6\|u_0\|_{C^{0,1}}.
			\end{split}
		\end{equation}
		If we take $\delta=\frac{1}{4C_4C_5}$, and $\varepsilon=\frac{1}{4C_4C_5C_6}$, we get
		\begin{equation}\label{eq4.33}
			\|\mathcal{G}(u_1)-\mathcal{G}(u_2)\|_{X_{T'}}\leq \frac12 \|u_1-u_2\|_{X_{T'}}
		\end{equation}
		and 
		\begin{equation}\label{eq4.34}
			\|\mathcal{G}(u)\|_{X_{T'}}\leq \delta.
		\end{equation}
		The theorem follows from \eqref{eq4.33}, \eqref{eq4.34}, and contraction mapping. And the inequality \eqref{eq4.28} follows from standard estimates of parabolic equation.
	\end{proof}
	
	\begin{proof}[Proof of Theorem \ref{thm0.2}:]
		By Proposition \ref{prop1.5}, Theorem \ref{thm0.2} follows from Theorem \ref{thm4.4}.
	\end{proof}

\end{document}